\documentclass{article}
\usepackage[utf8]{inputenc}
\usepackage[left=3cm,right=3cm,top=2cm,bottom=2cm]{geometry}

\usepackage{comment}
\usepackage{caption}
\usepackage{subcaption}
\usepackage{graphicx}
\usepackage{amsfonts,amsmath,amssymb,amsthm}
\usepackage{dsfont}
\usepackage[dvipsnames]{xcolor}
\usepackage{pgfplots}
\usepackage{hyperref}
\usepackage[normalem]{ulem}

\newtheorem{theorem}{Theorem}

\newtheorem{lemma}[theorem]{Lemma}
\newtheorem{definition}[theorem]{Definition}
\newtheorem{proposition}[theorem]{Proposition}
\newtheorem{remark}[theorem]{Remark}
\newtheorem{example}[theorem]{Example}

\newtheorem{question}{Question}

\providecommand{\keywords}[1]
{
  \small	
  \textbf{\textbf{Keywords}:} #1
}

\newcommand{\subjclass}[1]
{
  \small
  \textbf{\emph{Mathematics subject classification:}} #1
}

\newcommand{\ee}{\varepsilon}
\newcommand{\EE}{\mathbb{E}}

\newcommand{\PP}{\mathbb{P}}
\newcommand{\RR}{\mathbb{R}}
\newcommand{\CC}{\mathbb{C}}

\newcommand{\R}{\mathbb{R}}

\newcommand{\dint}{\mathrm{d}}

\usepackage[flushleft]{threeparttable}
\usepackage{array,booktabs,makecell}
\usepackage[font=small]{caption}

\title{Operatopes, Operanoids, and Noncommutative Zonoids}
\author{Eliza O'Reilly$^\dag$ and Venkat Chandrasekaran$^\ddag$ \thanks{Email: \texttt{eoreilly@jhu.edu,venkatc@caltech.edu}} \vspace{0.25in} \\ $^\dag$Department of Applied Mathematics and Statistics \\ Johns Hopkins University \\ Baltimore, MD 21218 \\ \\ $^\ddag$ Department of Computing and Mathematical Sciences \\ Department of Electrical Engineering \\ California Institute of Technology \\ Pasadena, CA 91125}

\date{\today}

\begin{document}

\maketitle

\begin{abstract}
We study a class of convex bodies called \emph{operatopes} that are obtained by taking Minkowski sums of affine images of an operator norm ball. This notion generalizes that of zonotopes which are Minkowksi sums of line segments. Taking the limit of the number of line segments to infinity yields the class of convex bodies called zonoids, which can also be viewed as the expectation of a random line segment.  Expanding on this interpretation, we analogously define \emph{operanoids} as the expectation of a random affine image of an operator norm ball.  In studying the properties of operanoids when the dimension of the operator norm ball grows, we arrive at a new asymptotic regime for limits of convex bodies.  This leads to the more general class of convex bodies called \emph{noncommutative zonoids}, and we use the framework of free probability theory to illustrate basic properties and examples. Finally, we discuss applications of operanoids and noncommutative zonoids in statistics and stochastic processes.
\end{abstract}

% REQUIRED
\keywords{approximation of convex bodies, lift zonoids, Minkowski classes, noncommutative probability, random matrices, zonotopes}

\subjclass{Primary 52A21; Secondary 52A23, 52A22, 46L53}
% REQUIRED
%\begin{AMS}
%Primary ; secondary ; 
%\end{AMS}

\section{Introduction}\label{sec:intro}

A \emph{zonotope} is a finite Minkowski sum of line segments.
%\VC{Do we want to say centered?} \EO{By centered do you mean origin symmetric? I would say we wait to specify the center at the origin til later. The main reason is that lift zonoids are actually not centered at the origin. But since all zonoids/operanoids have a centre of symmetry, we can make this assumption through the rest of the paper.} \VC{Let's discuss this.} \emph{Zonoids} are convex sets that are approximable in the Hausdorff metric by zonotopes. 
Zonotopes and their limits, called \emph{zonoids}, are prominent families of convex bodies that arise in numerous areas of mathematics.  In polyhedral combinatorics, zonotopes represent a class of polyhedra that can be concisely described even though they may have exponentially many vertices and facets \cite{mcmullen1971zonotopes, ziegler2012lectures}.  In statistics, zonoids provide a useful geometric summary of high-dimensional data \cite{mosler_multivariate_2002}.  In stochastic geometry, zonoids characterize fundamental properties of random hyperplane tessellations \cite{hug2024poisson}.  More broadly, zonotopes have also proven useful in the development of practical methods and algorithms in an array of engineering problems  \cite{alamo_guaranteed_2005, althoff_zonotope_2011, bern_optimization_2001, guibas_zonotopes_nodate}.  Despite this range of applications, a basic limitation of zonotopes is their polyhedrality, which constrains their utility in settings where convex bodies with richer boundary structure are required to model phenomena of interest.  Zonoids are not polyhedral in general, but we do not have a good finitely-parameterized way to describe arbitrary zonoids and it is difficult to decide whether a general convex body is a zonoid (this is the so-called zonoid problem \cite{Bolker_Zonoid_Problem}).

\subsection{Our Contributions}
% \VC{Should we have a new subsection here titled ``Our Contributions''?}
% \EO{Sure! that makes sense.}

In this paper, we introduce a finitely-parametrized yet non-polyhedral generalization of zonotopes called operatopes.  Formally, an \emph{$m$-operatope} is a finite Minkowski sum of affine images of the operator norm ball in the space of $m \times m$ Hermitian matrices $\mathbb{H}^m$.  For $m=1$ the associated operator norm ball is equal to the line segment $[-1,1]$, so $1$-operatopes correspond to zonotopes.  However, $m$-operatopes for larger $m$ can be non-polyhedral in general.  As an example, the Euclidean ball of dimension $d$ is clearly not a zonotope, but it is a $(d+1)$-operatope.  Figure~\ref{fig:enter-label} gives other examples. Analogously to the definition of zonoids, we say that a convex body is an \emph{$m$-operanoid} if it can be approximated in the Hausdorff metric by $m$-operatopes.  In principle, one can also consider Minkowski sums of elementary objects other than images of operator norm balls (see Section~\ref{sec:related}) to obtain different non-polyhedral generalizations of zonotopes.  However, as we will see in the sequel, operatopes and operanoids represent appealing objects of investigation for several reasons.  Specifically, operanoids provide a natural `random-matrix' generalization of zonoids as they facilitate convex-geometric summaries of tuples of random matrices, much like zonoids summarize random vectors.  Moreover, our definition of operatopes and operanoids introduces the parameter $m$ that opens up the new asymptotic regime $m \to \infty$ for studying limits of convex bodies, and this regime is not present with zonotopes and zonoids.  Analyzing these limits requires a new perspective and, as with the study of limits of growing-sized random matrices, the theory of noncommutative probability furnishes the relevant tools.
% \VC{Is this last bit reasonable?  I think we need something like this to motivate why we consider our particular Minkowski sums.  For example, one might wonder at this early stage why operatopes are more interesting / special than discotopes or something similar?  What do you feel?} \EO{Yes definitely, I think this motivation section sounds great.}

% logical outline -- equivalent characterizations; examples inspired by random matrix ensembles.  boundary structure of operanoids; relation to general convex bodies; approximation

In Section~\ref{sec:operatopes} we formally define operatopes and operanoids, and we give several equivalent characterizations of operanoids from probabilistic and functional-analytic perspectives.  These generalize the corresponding characterizations for zonoids.  In particular, Vitale \cite{Vitale1991} showed that a convex body $Z$ that is centrally symmetric with respect to the origin is a zonoid in $\R^d$ if and only if its support function $h_Z$ can be expressed as follows:
\begin{equation*}
    h_Z(u) = \mathbb{E}[|\langle u, X \rangle |], ~~~ u \in \mathbb{S}^{d-1}
\end{equation*}
for a random vector $X$ in $\R^d$. Stated differently, a zonoid is the expected value of a random line segment that is centered at the origin.\footnote{The expected value of a random convex body is defined as the deterministic convex body with support function given by the expected value of the random support function \cite{Artstein_Vitale_1975}.}  We generalize this result in Theorem~\ref{t:zonoid_vitale_characterization} in which we show that an \emph{operanoid} that is origin-symmetric can equivalently be defined as the expectation of a random linear image of an operator norm ball.  This characterization yields yet another way to define operanoids in Theorem~\ref{t:L1-subspace}, which generalizes a description on zonoids based on subspaces of $L_1$ \cite{bolker_class_1969}.  We conclude this section with examples of operanoids and operatopes derived from prominent random matrix ensembles such as Wigner and Wishart random matrices.

Section~\ref{sec:properties} presents various properties of operanoids by building on the different characterizations described in Section~\ref{sec:operatopes}.  Specifically, we give examples of $2$-operatopes in $\R^3$ that are not zonoids, and furthermore, we discuss the relationship between operanoids and general symmetric convex bodies.  We also show that the faces of operanoids must themselves be operanoids in Proposition~\ref{prop:operanoid_faces}.  Finally, we present precise rates that quantify how well operanoids can be approximated by operatopes in Theorem~\ref{t:approx_rate}, which generalizes a result on the approximation of zonoids by zonotopes \cite{bourgain_approximation_1989}.

As we highlighted earlier, our definition of an operanoid leads to the regime $m \rightarrow \infty$ for studying limits of convex bodies.  To facilitate this investigation, in Section~\ref{sec:NCzonoids-overview} we consider tuples of noncommuting random variables in a $C^\star$-probability space and we define convex bodies called \emph{noncommutative zonoids} whose support function is specified in terms of the tracial state applied to a linear combination of the noncommuting variables; see Section~\ref{sec:NCrandomvariables} for preliminaries on noncommutative probability.  When the underlying noncommutative random variables are freely independent, the resulting noncommutative zonoid is called a \emph{free zonoid}.  We prove in Proposition~\ref{prop:freelimit} that any free zonoid can be approximated in the Hausdorff metric by a sequence of $m$-operanoids as $m \rightarrow \infty$.

In Section~\ref{sec:applications}, we take initial steps towards applications by illustrating the role of operanoids in two contexts.  First, we use operanoids and NC zonoids to construct convex-geometric summaries of high-dimensional data distributions by generalizing the notion of a lift zonoid.  Second, we show that operanoids can describe particular geometric properties of a random hypersurface model, in analogy to zonoidal characterizations of random hyperplane tessellations.

\begin{figure}
    \centering
    \includegraphics[width=0.25\linewidth]{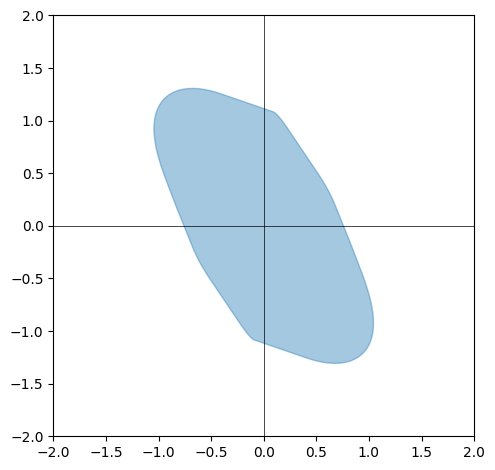} \hfill\includegraphics[width=0.25\linewidth]{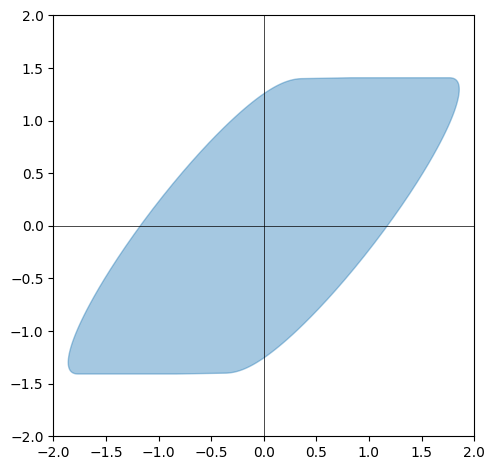} \hfill
    \includegraphics[width=0.25\linewidth]{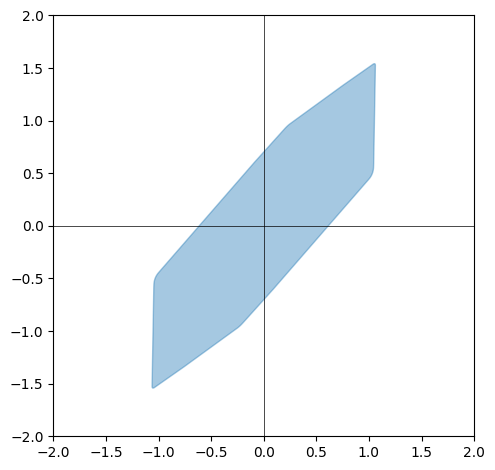}
    \caption{$3$-operatopes in $\R^2$ with one summand and the parametrizing matrices drawn from the GOE ensemble.}
    \label{fig:enter-label}
\end{figure}

\subsection{Related Work}\label{sec:related}

To discuss the relation between our work and the previous literature, it is useful to recall some ideas from convex geometry \cite{schneider_2013}.
% \VC{Should we give a background reference here?} \EO{Yes, I think the Schneider book is a good one to have here}
A \emph{Minkowski class} of convex bodies in $\RR^d$ is a non-empty collection of convex bodies that is closed under Minkowski addition and dilation. For a subset $G$ of affine transformations on $\RR^d$, a Minkowksi class $\mathcal{M}$ is $G$-invariant if $K \in \mathcal{M}$ implies $gK \in \mathcal{M}$ for any $g \in G$. This notion leads to the following definition: for a set of convex bodies $\mathcal{K}$ in $\RR^d$ and a subset $G$ of the affine transformation group of $\RR^d$, the \emph{$G$-invariant Minkowksi class generated by $\mathcal{K}$} is the collection of convex bodies of the form
\begin{align*}
    t_1 g_1K_1 + \cdots t_ng_nK_n, \text{ where } n \in \mathbb{N} \text{ and } t_i \geq 0, \, g_i \in G, \,  K_i \in \mathcal{K} \text{ for all } i = 1, \ldots, n,
\end{align*}
and their limits with respect to the Hausdorff metric.

Several families of convex bodies from previous work can be viewed as particular instances of Minkowski classes.  Zonoids are the smallest nontrivial example of an affine invariant Minkowksi class, namely one that is generated by line segments.  Other examples of affine invariant Minkowski classes include polyoids \cite{hug2023extremizers}, which have been considered when finding extremals of the Alexandrov-Fenchel inequality, and discotopes \cite{gesmundo_geometry_2022} which take Minkowski sums of ellipsoids.  Discotopes are zonoids but polyoids in general are not zonoids.

Operanoids also constitute a particular affine invariant Minskowski class of convex bodies.  Specifically, letting $\mathcal{K} \subset \R^d$ be the collection of all affine images of the operator norm ball in $\mathbb{H}^m$, the set of $m$-operanoids in $\R^d$ is precisely the associated Minskowski class with respect to this $\mathcal{K}$.

% Examples of other affine invariant Minkowski classes include polyoids \cite{hug2023extremizers}, which have been considered when finding extremals of the Alexandrov-Fenchel inequality, and discotopes \cite{gesmundo_geometry_2022} which take Minkowski sums of ellipsoids. 

% \EO{A \emph{Minkowski class} of convex bodies in $\RR^d$ is a non-empty collection of convex bodies that is closed under Minkowski addition and dilation. For a subset $G$ of affine transformations on $\RR^d$, a Minkowksi class $\mathcal{M}$ is $G$-invariant if $K \in \mathcal{M}$ implies $gK \in \mathcal{M}$ for any $g \in G$. This notion leads to the following definition: for a set of convex bodies $\mathcal{K}$ in $\RR^d$ and a subset $G$ of the affine transformation group of $\RR^d$, the \emph{$G$-invariant Minkowksi class generated by $\mathcal{K}$} is the collection of convex bodies of the form
% \begin{align*}
%     t_1 g_1K_1 + \cdots t_ng_nK_n, \text{ where } n \in \mathbb{N}, t_i \geq 0, g_i \in G, K_i \in \mathcal{K} \text{ for all } i = 1, \ldots, n
% \end{align*}
% and their limits with respect to the Hausdorff metric. 
% Zonoids are the smallest nontrivial example of an affine invariant Minkowksi class that is generated by line segments.
% Examples of other affine invariant Minkowski classes include polyoids \cite{hug2023extremizers}, which have been considered when finding extremals of the Alexandrov-Fenchel inequality, and discotopes \cite{gesmundo_geometry_2022} which take Minkowski sums of ellipsoids. }

\subsection{Notation and Terminology}\label{sec:notation}
% \VC{My sense is that this should be here rather than in the next section.  What do you think?}

We let $\mathbb{V}$ denote a Euclidean space with inner product $\langle \cdot , \cdot \rangle$, and $\mathbb{S}(\mathbb{V})$ and $B(\mathbb{V})$ denote the unit sphere and unit ball of $\mathbb{V}$, respectively. As shorthand, $\mathbb{S}^{d-1} := \mathbb{S}(\RR^d)$ and $B^d := B(\RR^d)$ denote the unit sphere and unit ball in $\RR^d$.  A nonempty compact convex set $K \subset \mathbb{V}$ is called a convex body.\footnote{In some texts, convex bodies are also required to have nonempty interior and for the origin to belong to the interior.  Here we do not make these assumptions.}  The set of all convex bodies in $\mathbb{V}$ is denoted $\mathcal{K}(\mathbb{V})$, with the shorthand $\mathcal{K}^d := \mathcal{K}(\RR^d)$ denoting all convex bodies in $\RR^d$. In the following we will mostly be focused on centrally symmetric convex bodies, i.e. $K \in \mathcal{K}(\mathbb{V})$ such that there exists some center $c \in \mathbb{V}$ for which $K-c = -(K-c)$. Centrally symmetric convex bodies form a closed subset of $\mathcal{K}(\mathbb{V})$.  The \emph{support function} of a convex body $K \in \mathcal{K}(\mathbb{V})$ is denoted $h_K : \mathbb{V} \to \RR$ and is defined as
\[h_K(u) := \max_{x \in K} \, \langle u, x \rangle.\]
Support functions of convex bodies are sublinear, i.e. $h_{K}(\alpha u) = \alpha h_K(u)$ for $\alpha \geq 0$ and $h(u + \omega) \leq h_K(u) + h_{K}(\omega)$, and every sublinear function is also the support function of a convex body.  In particular, support functions uniquely characterize convex bodies.

%For a Euclidean space $\mathbb{V}$ and linear image $A: \EE \to \mathbb{V}$, $A(K)$ is a convex body in $\mathbb{V}$ with support function $h_{A(K)}(v) = h_K(A^tv)$.

Many properties of and operations on the set of convex bodies can be described in terms of their support functions. The \emph{Hausdorff metric} between convex bodies $K, L \in \mathcal{K}(\mathbb{V})$ is defined by 
\[d_H(K, L) := \max_{u \in S(\mathbb{V})} |h_K(u) - h_L(u)|,\]
and $\mathcal{K}(\mathbb{V})$ equipped with the metric $d_H$ is a complete metric space \cite{schneider_2013}.
The \emph{Minkowski sum} of two convex bodies $K, L \in \mathcal{K}(\mathbb{V})$ is given by 
$K + L = \{x + y \in \mathbb{V} \, | \, x \in K, y \in L\}$,
and the support function of $K + L$ is the sum of support functions, i.e., $h_{K + L}(u) = h_K(u) + h_L(u).$  We refer to the standard reference \cite{schneider_2013} for further background on the geometry of convex bodies.

In the following, we will appeal to the notion of a \emph{random convex body} in $\mathbb{V}$, which is a measurable map $Z$ from a probability space $\Omega$ to $\mathcal{K}(\mathbb{V})$. Under the assumption $\mathbb{E}[\max_{z \in Z} \|z\|_2] < \infty$ for the Euclidean norm $\|\cdot\|_2$ on $\mathbb{V}$, the expectation of the support function $\EE[h_Z(u)]$ is a well-defined sublinear function, and thus is the support function of a convex body. We then define the \emph{expectation of Z} to be the convex body $\EE[Z]$ with support function
\begin{align*}
    h_{\mathbb{E}[Z]}(u) := \mathbb{E}[h_Z(u)].
\end{align*}

Finally, the Euclidean space of Hermitian $m \times m$ matrices is denoted $\mathbb{H}^m$.  The \emph{operator norm} $\|A\|_{op}$ of any $A \in \mathbb{H}^m$ is the largest singular value of $A$.  The unit ball of the operator norm on $\mathbb{H}^m$ is given by $$B_\infty(\mathbb{H}^m) := \{A \in \mathbb{H}^m : \|A\|_{op} \leq 1\}.$$  
The \emph{trace (or nuclear) norm} $\|A\|_1$ of any $A \in \mathbb{H}^m$ is given by the sum of the singular values of $A$.  The support function of the operator norm ball $B_{\infty}$ is $h_{B_{\infty}}(A) = \|A\|_1$ for $A \in \mathbb{H}^m$.  A linear map $\mathcal{A}: \mathbb{H}^m \to \RR^d$ is determined by a $d$-tuple of matrices $A_1, \ldots, A_d \in \mathbb{H}^m$ so that for $X \in \mathbb{H}^m$, we have $\mathcal{A}(X) = (\langle A_1, X\rangle, \ldots, \langle A_d, X\rangle)$ and the adjoint is given by $\mathcal{A}^\star u = \sum_{i=1}^d u_iA_i$ for $u \in \RR^d$.  The support function of $\mathcal{A}(B_{\infty})$ is then
 \begin{align*}
  h_{\mathcal{A}(B_{\infty})}(u) := h_{B_{\infty}}(\mathcal{A}^\star u) = \left\|\sum_{i=1}^d u_i A_i\right\|_1.   
 \end{align*}

\section{Operatopes and Operanoids} \label{sec:operatopes}
In this section, we formally present and investigate the properties of operatopes and operanoids.  Section~\ref{sec:definition} gives the definition of these objects.  We also  derive several equivalent characterizations of operanoids in Section~\ref{sec:definition}, paralleling those that are available for zonoids. Section~\ref{sec:rm-examples} presents examples of operanoids derived from various random matrix ensembles, and we revisit these in Section~\ref{sec:NCzonoids-overview}.  

\subsection{Definition}\label{sec:definition}

As we remarked in Section \ref{sec:intro}, operatopes are akin to zonotopes and operanoids are obtained by taking limits of operatopes.  Here we give the formal definitions.

\begin{definition}
A centrally symmetric convex body $Z$ in $\RR^d$ that is the finite Minkowski sum of affine images of the operator norm ball in $\mathbb{H}^m$ is called an $m$-operatope.  Equivalently, an $m$-operatope $Z$ in $\R^d$ is a convex body with support function of the following form:
\[h_{Z}(u) = \left\langle v, u \right\rangle + \sum_{j=1}^N  \left\|\sum_{i=1}^d u_i A^{(j)}_i\right\|_1, \quad u \in \mathbb{S}^{d-1},\]
for some $N \geq 1$ and $A^{(j)}_1, \ldots, A^{(j)}_d \in \mathbb{H}^m$ for $j = 1, \ldots, N$ and $v \in \R^d$. If $Z$ is centered at the origin, then $v = 0$.

A centrally symmetric convex body $Z$ in $\RR^d$ is an \emph{$m$-operanoid} if it is the limit, in the Hausdorff sense, of $m$-operatopes.

\end{definition}

We note that $1$-operatopes are zonotopes and similarly $1$-operanoids are zonoids.  We describe next the structure of $2$-operatopes in $\R^d$ that are defined by $2 \times 2$ real symmetric matrices, i.e., the matrices $A_1^{(j)},\dots,A_d^{(j)}$ in the above definition are $2 \times 2$ real symmetric matrices.  In this setting, we can directly compute the relevant eigenvalues, and thus obtain the general form of the support function.  Specifically, such $2$-operatopes are Minkowski sums of convex bodies with a support function of the following form for some vectors $a,b,c \in \RR^d$:
\begin{align*}
    h(u) = \left\|\sum_{i=1}^d u_i A_i \right\|_1 = \left\|\begin{bmatrix} \sum_{i=1}^d u_i a_i & \sum_{i=1}^d u_i b_i \\ \sum_{i=1}^d u_i b_i & \sum_{i=1}^d u_i c_i \end{bmatrix} \right\|_1 = \left\|\begin{bmatrix} \langle u,a \rangle & \langle u, b \rangle \\ \langle u, b \rangle & \langle u, c \rangle \end{bmatrix} \right\|_1.
\end{align*}
The eigenvalues of the above matrix are
\begin{align*}
    \lambda_1, \lambda_2 %&= \frac{\langle u, a+ c \rangle}{2} \pm \sqrt{\frac{(\langle u, a+ c \rangle)^2}{4} - \langle u, a\rangle\langle u, c \rangle + \langle u,b \rangle^2} \\
    %&= \frac{\langle u, a+ c \rangle}{2} \pm \sqrt{\frac{\langle u, a \rangle^2+ \langle u,c \rangle^2 + 2\langle u,a\rangle \langle u, c \rangle}{4} - \langle u, a\rangle\langle u, c \rangle + \langle u,b \rangle^2} \\
    %&= \frac{\langle u, a+ c \rangle}{2} \pm \sqrt{\frac{\langle u, a \rangle^2+ \langle u,c \rangle^2 - 2\langle u,a\rangle \langle u, c \rangle}{4} + \langle u,b \rangle^2} \\
    &= \frac{\langle u, a + c \rangle}{2} \pm \sqrt{\frac{\langle u, a - c \rangle^2}{4} + \langle u,b \rangle^2},
\end{align*}
and thus this linear image of the operator norm ball in the space of $2 \times 2$ matrices must have support function of the form
\begin{align}\label{eq:2-operatope}
    h(u) &= |\lambda_1| + |\lambda_2| = \left|\frac{\langle u, a + c \rangle}{2} + \sqrt{\frac{\langle u, a - c \rangle^2}{4} + \langle u,b \rangle^2}\right| + \left|\frac{\langle u, a + c \rangle}{2} - \sqrt{\frac{\langle u, a - c \rangle^2}{4} + \langle u,b \rangle^2}\right| \notag \\
    %&= \left|\frac{\langle u, a + c \rangle}{2}\right| + \sqrt{\frac{\langle u, a - c \rangle^2}{4} + \langle u,b \rangle^2} \notag \\
    %& \qquad + \max\left\{\left|\frac{\langle u, a + c \rangle}{2}\right| - \sqrt{\frac{\langle u, a - c \rangle^2}{4} + \langle u,b \rangle^2}, -\left|\frac{\langle u, a + c \rangle}{2}\right| + \sqrt{\frac{\langle u, a - c \rangle^2}{4} + \langle u,b \rangle^2} \right\} \notag \\
    &= \max\left\{\left|\langle u, a + c \rangle\right| , \sqrt{\langle u, a - c \rangle^2 + \langle u, 2b \rangle^2} \right\}. 
\end{align}
In summary, $2$-operatopes specified by real symmetric matrices are Minkowski sums of convex hulls of a line segment and a $2$-dimensional disc.  As we will see in Section \ref{sec:relation}, there are $2$-operatopes in $\R^3$ that are not zonoids (see Figure \ref{fig:nonzonoid}).

Even for operatopes that may be zonoids, the operatope representation can offer some advantages.  For example, an ellipsoid is a zonoid, but it does not have a finite parametrization within this class as a zonotope because it is not a polytope.  On the other hand, ellipsoids in $\R^d$ are $(d+1)$-operatopes, which facilitates a finite parametrization via linear images of operator norm balls.  More concretely, let $C \in \RR^{d \times d}$ and consider the convex body in $\RR^d$ with support function of the form $h_K(u) = \|Cu\|_2$, which is an ellipsoid in $\RR^d$ centered at the origin.  Define $A_1, \ldots, A_d \in \mathbb{H}^{d+1}$ such that 
\begin{align}
    \sum_{i=1}^d u_i A_i = \frac{1}{\sqrt{2}}\begin{bmatrix} 0 & & & (Cu)_1 \\ & \ddots & &  \vdots \\ & & 0 & (Cu)_d \\ (Cu)_1 & \cdots & (Cu)_d & 0 \end{bmatrix}.
\end{align}
Letting $\mathcal{A}: \mathbb{H}^{d+1} \to \RR^d$ denote the linear image such that $\mathcal{A}(X) = (\langle A_1, X \rangle, \cdots, \langle A_d, X\rangle)$,
\begin{align}
    h_{\mathcal{A}(B_{\infty})}(u) &= \left\|\sum_{i=1}^d u_i A_i \right\|_{1} = \sqrt{\frac{1}{2}\mathrm{Tr}\begin{bmatrix} (Cu)_1^2 & & (Cu)_1(Cu)_d & 0 \\ & \ddots & &  \vdots \\ (Cu)_d(Cu)_1 & & (Cu)_d^2 & 0 \\ 0 & \cdots & 0 & \sum_{i=1}^d (Cu)_i^2 \end{bmatrix}} = \|Cu\|_2.
\end{align}
Thus, ellipsoids are examples of linear images of operator norm balls.  More generally, finite Minkowski sums of ellipsoids, which are called \emph{discotopes} \cite{gesmundo_geometry_2022}, are also operatopes.

As we mentioned in the introduction, zonoids can be described as expectations of random line segments due to a result of Vitale \cite{Vitale1991}.  Here we generalize this result to show that operanoids can similarly be described as the expectation of a random linear image of an operator norm ball, with our proof closely following that of Theorem 3.1 in \cite{Vitale1991} for zonoids.

\begin{theorem}\label{t:zonoid_vitale_characterization}
A centrally symmetric convex body $Z$ in $\RR^d$ centered at the origin is an $m$-operanoid if and only if the support function of $Z$ is of the form: 
\[h_{Z}(u) = \EE\left[\left\|\sum_{i=1}^d u_i A_i \right\|_1\right],\]% = \int_{\Omega} \|\sum_{i=1}^d u_i A_i(\omega)\|_{1} \dint \omega.\]
for some $d$-tuple of random matrices $A_1, \ldots, A_d$ in $\mathbb{H}^m$ with $\EE\left[\left(\sum_{i=1}^d \|A_i\|_2^2\right)^{\frac{1}{2}}\right] < \infty$.
\end{theorem}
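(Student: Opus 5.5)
We prove the two directions separately, following the template of Vitale's argument for zonoids. For convenience, write $\mathbf{A} = (A_1,\ldots,A_d) \in (\mathbb{H}^m)^d$ and $\|\mathbf{A}\| := (\sum_i \|A_i\|_2^2)^{1/2}$. The basic pointwise bound we will use repeatedly is
\[
\Big\|\sum_i u_i A_i\Big\|_1 \le \sqrt{m}\,\Big\|\sum_i u_i A_i\Big\|_2 \le \sqrt{m}\,\|u\|_2\,\|\mathbf{A}\|,
\]
obtained from Cauchy--Schwarz twice.

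\emph{(Sufficiency.)} Suppose $h_Z(u) = \EE\|\sum_i u_i A_i\|_1$ with $\EE\|\mathbf{A}\|<\infty$. First, this function is sublinear and finite by the bound above, so it is indeed a support function of an origin-symmetric body. Second, we approximate the law $\mu$ of $\mathbf{A}$ on $(\mathbb{H}^m)^d$ by finitely supported measures. Fix $\ee>0$ and choose $R$ with $\EE[\|\mathbf{A}\|\mathds{1}_{\|\mathbf{A}\|>R}]<\ee$. Partition the ball of radius $R$ into finitely many Borel cells $C_k$ of diameter less than $\ee$, and pick points $\mathbf{A}^{(k)}\in C_k$. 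Define the $m$-operatope $Z_\ee$ by
\[
h_{Z_\ee}(u)=\sum_k \mu(C_k)\Big\|\sum_i u_i A^{(k)}_i\Big\|_1 .
\]
This is a positive combination of support functions of linear images of $B_\infty$, and scalars are absorbed by replacing $A^{(k)}_i$ with $\mu(C_k)A^{(k)}_i$. For $u\in\mathbb{S}^{d-1}$, the map $\mathbf{A}\mapsto \|\sum u_iA_i\|_1$ is $\sqrt m$-Lipschitz with respect to $\|\cdot\|$. Hence $|h_Z(u)-h_{Z_\ee}(u)|\le \sqrt m(\ee+\ee)$ uniformly in $u$, so $d_H(Z,Z_\ee)\to 0$. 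Thus $Z$ is an $m$-operanoid.

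\emph{(Necessity.)} Let $Z$ be an origin-centered $m$-operanoid, so $Z$ is the Hausdorff limit of $m$-operatopes $Z_n$. We may take each $Z_n$ centered at $0$: the centers $v_n$ of $Z_n$ converge to the center of $Z$, which is $0$, so the translations by $v_n$ vanish in the limit. Each $h_{Z_n}(u)=\sum_{j=1}^{N_n}\|\sum_i u_iA^{(j,n)}_i\|_1$. We recast this as an expectation over the compact ``sphere'' $S=\{\mathbf{A}:\|\mathbf{A}\|=1\}$ by writing $\mathbf{A}^{(j,n)}=r_{j,n}\mathbf{B}^{(j,n)}$ with $\mathbf{B}^{(j,n)}\in S$. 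This gives
\[
h_{Z_n}(u)=\int_S \Big\|\sum_i u_iB_i\Big\|_1\,d\nu_n(\mathbf{B}),\qquad \nu_n=\sum_j r_{j,n}\delta_{\mathbf{B}^{(j,n)}},
\]
a finite positive measure on $S$. To obtain the expectation form we must bound the total masses $\nu_n(S)$ uniformly, and this is the main obstacle. The key lower bound is
\[
\int_{\mathbb{S}^{d-1}}\Big\|\sum_i u_iB_i\Big\|_1\,d\sigma(u)\ \ge\ c>0 \quad \text{for all } \mathbf{B}\in S,
\]
with $\sigma$ the uniform measure on the sphere. It holds because the left side is continuous in $\mathbf{B}$ and positive: if it vanished, then $\sum_i u_iB_i=0$ for all $u$, forcing $\mathbf{B}=0$, contradicting $\mathbf{B}\in S$. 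Compactness of $S$ then gives the uniform constant $c$. Integrating $h_{Z_n}$ over $u$ yields $c\,\nu_n(S)\le\int h_{Z_n}\,d\sigma$, which is bounded because $h_{Z_n}\to h_Z$ uniformly on the sphere.

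With total masses bounded, we pass to the limit. By Prokhorov's theorem (weak-$*$ compactness of bounded measures on the compact set $S$), a subsequence $\nu_n\to\nu$ weakly. Since $\mathbf{B}\mapsto\|\sum u_iB_i\|_1$ is continuous on $S$, we get $h_Z(u)=\int_S\|\sum u_iB_i\|_1\,d\nu$. If $\nu=0$, then $Z=\{0\}$ and we take $\mathbf{A}=0$. Otherwise set $t=\nu(S)$ and let $\mathbf{B}$ be a random element of $S$ with law $\nu/t$. Then $\mathbf{A}:=t\mathbf{B}$ satisfies $h_Z(u)=\EE\|\sum u_iA_i\|_1$ and $\EE\|\mathbf{A}\|=t<\infty$, which completes the proof.
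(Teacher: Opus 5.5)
Your proof is correct. The necessity direction follows the paper's route, and the sufficiency direction takes a different one.

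\textbf{Necessity.} The paper also rescales each summand to unit norm and picks it with probability proportional to its norm, so its random tuple has norm $\sum_j\|\mathcal{A}^{(j)}_N\|$. This is exactly your $t\mathbf{B}$ with $\mathbf{B}\sim\nu_N/\nu_N(S)$, and the paper likewise extracts a limit by Prokhorov. The paper only asserts that these tuples ``must be uniformly bounded'' because $Z_N\to Z$. Your compactness lower bound $\int_{\mathbb{S}^{d-1}}\|\sum_i u_iB_i\|_1\,d\sigma(u)\ge c>0$ on $S$ actually proves this, via $\nu_n(S)\le c^{-1}\int h_{Z_n}\,d\sigma$. You also deal with the translation vectors $v_n$, which the paper ignores. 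Your claim $v_n\to 0$ follows since $2\langle v_n,u\rangle=h_{Z_n}(u)-h_{Z_n}(-u)\to 0$ uniformly on the sphere.

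\textbf{Sufficiency.} Here your route differs from the paper's. The paper takes i.i.d.\ copies of $(A_1,\dots,A_d)$ and invokes the Artstein--Vitale strong law of large numbers for random convex bodies, so the empirical operatopes converge to $Z$ almost surely. You instead quantize the law deterministically: you truncate at radius $R$, partition the ball into small cells, and control the error by the $\sqrt{m}$-Lipschitz bound and the tail estimate.

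\textbf{What each buys.} Your argument is self-contained and needs no limit theorem for random sets. The paper's construction produces random samples as approximants, which is the same construction later quantified in Theorem~\ref{t:approx_rate}. Your cell count grows like a covering number of a ball in the $dm^2$-dimensional space $(\mathbb{H}^m)^d$. So your argument yields existence but no useful approximation rate.
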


\begin{proof}
%First, recall that any $m$-operatope is the Minkowski sum $\mathcal{A}^{(1)}(B^m_{\infty}) + \cdots + \mathcal{A}^{(n)}(B^m_{\infty})$ in $\RR^d$ for some $n \in \mathbb{N}$, where the linear images $\mathcal{A}^{(j)}: M_m(\RR) \to \RR^d$ are defined by
%\[\mathcal{A}^{(j)}(B) = (\langle A^{(j)}_1, B\rangle, \ldots, \langle A^{(j)}_d, B \rangle) \in \RR^d, \quad  \text{ for all }B \in M_m(\RR),\]
%for some matrices $A^{(j)}_1, \ldots A^{(j)}_d \in M_m(\RR)$.
%Thus, 

For the forward implication, first consider the case that the $m$-operanoid $Z$ is in fact an $m$-operatope. Then, there exist $d$-tuples of $m \times m$ Hermitian matrices $(A_1^{(j)}, \ldots, A_d^{(j)})$ for $j = 1, \ldots, N$ and some $N \in \mathbb{N}$ such that the support function of $Z$ satisfies
\[h_Z(u) = \sum_{j=1}^N \left\|\sum_{i=1}^d u_i A_i^{(j)}\right\|_1, \quad u \in \mathbb{S}^{d-1}.\]
Then, we can let $(A_1, \ldots, A_d)$ be a $d$-tuple of random matrices such that $(A_1, \ldots, A_d) = N \cdot (A^{(j)}_1, \ldots A^{(j)}_d)$ with probability $\frac{1}{N}$, and see that $Z$ is the expectation of the \emph{random} linear image $\mathcal{A}$ of $B_{\infty}$ defined by $(A_1, \ldots, A_d)$. 

Now assume $Z$ is a general centrally symmetric $m$-operanoid and thus there exists a sequence of centrally symmetric $m$-operatopes $\{Z_N\}_{N \in \mathbb{N}}$ such that $Z_N$ converges to $Z$ with respect to the Hausdorff metric. For each $N \in \mathbb{N}$ we can write 
\[Z_N = \mathcal{A}_N^{(1)}(B_{\infty}) + \cdots + \mathcal{A}_{N}^{(k_n)}(B_{\infty}),\]
for some $k_N \in \mathbb{N}$ and linear images $\mathcal{A}_N^{(j)}:\mathbb{H}^m \to \RR^d$.
Define the norm $\|\cdot\|$ for $\mathcal{A}: \mathbb{H}^m\to \RR^d$ given by $\|\mathcal{A}\|^2 : = \sum_{i=1}^d \|A_i\|_{2}^2$. Then, let $\tilde{\mathcal{A}}_n$ be a random linear image such that $\tilde{\mathcal{A}}_n = \frac{\sum_{j=1}^{k_N} \|\mathcal{A}_N^{(j)}\|}{\|\mathcal{A}_N^{(j)}\|}\mathcal{A}_N^{(j)}$ with probability $p^{(N)}_j = \frac{\|\mathcal{A}_N^{(j)}\|}{\sum_{j=1}^{k_N} \|\mathcal{A}_N^{(j)}\|}$ and observe that
$$Z_N := \EE[\tilde{\mathcal{A}}_N(B_{\infty})].$$ 
Since $Z_N$ converges in Hausdorff distance to an $m$-operanoid $Z$, the sequence of $d$-tuples of random matrices $(\tilde{A}_{N,1}, \ldots, \tilde{A}_{N,d})$ defining $\tilde{\mathcal{A}}_N$ must be uniformly bounded for all $N$. By Prokorov's theorem, there is a subsequence $(\tilde{A}_{N_{\ell},1}, \ldots, \tilde{A}_{N_{\ell},d})$ that converges in distribution to a $d$-tuple $(\tilde{A}_1, \ldots, \tilde{A}_d)$ as $\ell \to \infty$. Thus, for all $u \in \mathbb{S}^{d-1}$,
\begin{align*}
    h_Z(u) = \lim_{\ell \to \infty} h_{\tilde{Z}_{n_\ell}}(u) = \lim_{\ell \to \infty} \EE\left[\left\|\sum_{i=1}^d u_i \tilde{A}_{N_\ell,i}\right\|_1\right] =  \EE\left[\left\|\sum_{i=1}^d u_i \tilde{A}_i\right\|_1\right].
\end{align*}
%and by Theorem 1.8.15 in \cite{}, $\EE[\|\sum_{i=1}^d u_i \tilde{A}_i\|_1]$ is the support function of a convex body $\tilde{Z}$ and $\tilde{Z}_n \to \tilde{Z}$ in the Hausdorff topology. Finally, by assumption, $\tilde{Z}_n$ converges to $\frac{1}{\|\mathcal{A}\|}Z$ and thus $Z = \|\mathcal{A}\|\tilde{Z} = \EE[K]$ is the expected value of the random linear image $K = \|\mathcal{A}\|\tilde{\mathcal{A}}(B_{\infty}^m)$ of the operator norm ball.

For the opposite implication, let $Z$ be a convex body in $\RR^d$ with the support function $$h_Z(u) = \EE\left[\left\|\sum_{i=1}^d u_i A_i \right\|_1\right]$$
for some $d$-tuple of random $m \times m$ Hermitian matrices $(A_1, \ldots, A_d)$. Consider a collection $\{(A_1^{(N)}, \ldots A_d^{(N)})\}_{N \in \mathbb{N}}$ of i.i.d. copies of $(A_1, \ldots, A_d)$, and define $Z_N$ as the random convex body with support function 
\[h_{Z_N}(u) := \frac{1}{N} \sum_{k=1}^N \left\|\sum_{i=1}^d u_iA_i^{(k)}\right\|_1.\]
Note that for all $N$, $Z_N$ is an $m$-operatope almost surely. By the strong law of large numbers for convex bodies \cite{Artstein_Vitale_1975}, $Z_N \to Z$ almost surely.
%$d$-tuple of random matrices with finite support such that $\EE[\|\mathcal{A}^{(\ee)} - \mathcal{A}\|] < \ee$ for the corresponding linear images and define $Z_{\ee}$ to be the convex body with support function
%\begin{align*}
%    h_{Z_{\ee}}(u) = \EE\left[\|\sum_{i=1}^d u_i A^{(\ee)}_i \|_1\right].
%\end{align*}
%Then, $Z_{\ee}$ is an $m$-operatope and 
%\begin{align*}
%    |h_Z(u) - h_{Z_{\ee}}(u)| \to 0 \text{ as } \ee \to 0.
%\end{align*}
Thus, there exists a sequence of $m$-operatopes that converge in the Hausdorff metric to $Z$ and hence $Z$ is an $m$-operanoid.
\end{proof}

Continuing with the illustration in \eqref{eq:2-operatope}, this result tells us that any $2$-operanoid specified by $2 \times 2$ real symmetric matrices must have a support function of the form:
\begin{align*}
   h(u) = \mathbb{E}\left[\max\left\{\left|\langle u, a + c \rangle\right| , \sqrt{\langle u, a - c \rangle^2 + \langle u, 2b \rangle^2} \right\}\right]
\end{align*}
for some \emph{random vectors} $a$, $b$ and $c$ in $\R^d$.

%\VC{This would be a good place to mention the structure of $2$-operanoids, but let's discuss.}

As our final equivalent characterization of operanoids, we recall that zonoids carry a functional-analytic characterization related to subspaces of $L^1([0,1], \RR)$; see Theorem 6.1 of \cite{bolker_class_1969}. The following result generalizes this perspective to $m$-operanoids. Let $L^1([0,1], \mathbb{H}^m)$ denote the linear space of compact operators $A: [0,1] \to \mathbb{H}^m$ such that the $L^1$ norm defined by
$$\|A\|_{L^1} := \int_{[0,1]} \mathrm{tr}(A(\omega)^*A(\omega))^{1/2} \dint \omega = \int_{[0,1]} \|A(\omega)\|_1 \dint \omega$$
is finite.

\begin{theorem} \label{t:L1-subspace}
A centrally symmetric convex body $Z \subset \RR^d$ centered at the origin is an m-\emph{operanoid} if and only if the normed space $(\RR^d, h_Z(\cdot))$ is isometric to a subspace of $L^1([0,1], \mathbb{H}^m)$.  %That is, its support function is of the form
%\begin{align*}
%    h_K(u) = \int_0^1 \| \sum_{i=1}^d u_if_i(t) \|_{1} \dint t,
%\end{align*}
%where for each $i$, $f_i : [0,1] \to \mathbb{M}_m(\RR)$ is an element of $\mathcal{L}^1([0,1], \mathbb{M}_m(\mathbb{R}))$.
\end{theorem}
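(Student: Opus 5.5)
The plan is to reduce both directions to Theorem~\ref{t:zonoid_vitale_characterization} by identifying random $d$-tuples of Hermitian matrices with $d$-tuples of elements of $L^1([0,1],\mathbb{H}^m)$.

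\emph{Forward direction.} Suppose $Z$ is an origin-centered $m$-operanoid. By Theorem~\ref{t:zonoid_vitale_characterization} there is a random $d$-tuple $(A_1,\dots,A_d)$ in $\mathbb{H}^m$ with
\[
h_Z(u)=\EE\Bigl\|\sum_i u_iA_i\Bigr\|_1 \quad\text{and}\quad \EE\Bigl(\sum_i\|A_i\|_2^2\Bigr)^{1/2}<\infty .
\]
The first step is to move this tuple onto $[0,1]$. The joint law $\mu$ of $(A_1,\dots,A_d)$ is a Borel probability measure on the Polish space $(\mathbb{H}^m)^d\cong\RR^{dm^2}$. A standard fact (the Skorokhod representation, or the Borel isomorphism theorem for standard probability spaces) gives a measurable map $f=(f_1,\dots,f_d):[0,1]\to(\mathbb{H}^m)^d$ whose pushforward of Lebesgue measure is $\mu$. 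Each $f_i$ lies in $L^1([0,1],\mathbb{H}^m)$, because $\|X\|_1\le\sqrt m\,\|X\|_2$ and the moment assumption is finite. Next define the linear map $T:\RR^d\to L^1([0,1],\mathbb{H}^m)$ by $Tu=\sum_iu_if_i$. Then
\[
\|Tu\|_{L^1}=\int_0^1\Bigl\|\sum_iu_if_i(\omega)\Bigr\|_1\dint\omega=\EE\Bigl\|\sum_iu_iA_i\Bigr\|_1=h_Z(u).
\]
Hence $T$ is an isometry from $(\RR^d,h_Z)$ onto its image. Note that $h_Z$ is a genuine norm only if $Z$ is full-dimensional; in general it is a seminorm, and the same argument applies with "isometric" read in the seminorm sense.

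\emph{Reverse direction.} Suppose $T:(\RR^d,h_Z)\to L^1([0,1],\mathbb{H}^m)$ is a linear isometry onto a subspace. Set $f_i:=Te_i$, so that $Tu=\sum_iu_if_i$ by linearity. Then
\[
h_Z(u)=\int_0^1\Bigl\|\sum_iu_if_i(\omega)\Bigr\|_1\dint\omega .
\]
Now regard $\omega$ as uniform on $[0,1]$ and put $A_i:=f_i(\omega)$. This is a random $d$-tuple of Hermitian matrices. Its moment condition holds because $\|X\|_2\le\|X\|_1$ gives
\[
\EE\Bigl(\sum_i\|A_i\|_2^2\Bigr)^{1/2}\le\sum_i\int_0^1\|f_i\|_1\,\dint\omega<\infty .
\]
Theorem~\ref{t:zonoid_vitale_characterization} then shows that $Z$ is an $m$-operanoid.

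The only genuinely nontrivial step is realizing an arbitrary random tuple on $[0,1]$ with Lebesgue measure. I would cite the fact that every Borel probability measure on a Polish space is the pushforward of Lebesgue measure on $[0,1]$. Alternatively, one can avoid it by taking the finitely supported random tuples from the operatope approximation and building step functions on $[0,1]$. That route, however, needs the limiting argument repeated, so the pushforward fact is cleaner.
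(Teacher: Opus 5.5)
Your proof is correct. Its skeleton matches the paper's: both directions go through Theorem~\ref{t:zonoid_vitale_characterization}, using $Tx=\sum_i x_iA_i$ in the forward direction and $A_i=Te_i$ in the reverse.

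The one real difference is how you get onto $[0,1]$. The paper leaves the random tuple on whatever probability space $(\Omega,\mathcal{F},P)$ carries it and embeds $(\RR^d,h_Z)$ isometrically into $L^1(\Omega,\mathbb{H}^m)$. It then invokes, following Bolker's Theorem 6.1, the fact that every separable subspace of $L^1(\Omega,\mathbb{H}^m)$ is isometric to a subspace of $L^1([0,1],\mathbb{H}^m)$. You instead observe that $h_Z$ depends only on the joint law of $(A_1,\dots,A_d)$. You then realize that law as a pushforward of Lebesgue measure on $[0,1]$, via a Borel isomorphism plus a quantile transform, so $T$ lands in $L^1([0,1],\mathbb{H}^m)$ directly.

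Your route is more self-contained, since the pushforward fact is a standard result with a short proof. The paper's embedding claim, by contrast, is a vector-valued analogue of the scalar fact behind Bolker's theorem, and the paper asserts it without proof. The paper's route is more intrinsic and more general in return: it applies to any separable subspace of $L^1(\Omega,\mathbb{H}^m)$ without relying on the norm being determined by the law.

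You also supply two details the paper omits. First, you verify the moment condition in the reverse direction via $\|X\|_2\le\|X\|_1$, which is needed to invoke Theorem~\ref{t:zonoid_vitale_characterization}. Second, you note that $h_Z$ is only a seminorm when $Z$ is not full-dimensional.
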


\begin{proof}
    We follow closely the proof of Theorem 6.1 in \cite{bolker_class_1969}. Assume $Z$ is an $m$-operanoid for a $d$-tuple of random matrices $A_1, \ldots, A_d$ defined on a probability space $(\Omega, \mathcal{F}, P)$.
    Consider the map $T: \RR^d \to L^1(\Omega, \mathbb{H}^m)$ where for $x \in \RR^d$, $Tx : \omega \to \mathbb{H}^m$ is the function $\sum_{i=1}^d x_i A_i(\cdot)$. Then by Theorem \ref{t:zonoid_vitale_characterization}, $h_Z(x) = \int_{\Omega} \sum_{i=1}^d x_i A_i(\omega) \dint P(\omega) = \|Tx\|_{L^1}$, and thus $T$ is an isometry from $(\RR^d, h_Z(\cdot))$ to $L^1(\Omega, \mathbb{H}^m)$. As argued in Theorem 6.1 of \cite{bolker_class_1969}, the result follows from the fact that every separable subspace of $L^1(\Omega, \mathbb{H}^m)$ (for any probability space $\Omega$) is isometric to a subspace of $L^1([0,1], \mathbb{H}^m)$, and we have a finite-dimensional subspace which is clearly separable.
    % \VC{This last sentence is a bit confusing.  In particular, I don't understand the phrase ``the general probability space can be removed''.} \EO{I edited it to remove this phrase - is it ok now?} \VC{Can we discuss this?}

    Now assume there exists an isometry $T:(\RR^d, \| \cdot \|) \to L^1([0,1], \mathbb{H}^m)$ for a norm $\|\cdot\|$ on $\RR^d$. Let $e_1, \ldots, e_d$ be the standard basis in $\RR^d$ and set $A_i = Te_i$ for each $i$. Then, for each $x \in \RR^d$,
    \begin{align*}
        \|x\| = \left\|Tx \right\|_{L^1} = \left\|\sum_{i=1}^d x_i Te_i \right\|_{L^1} = \int_{[0,1]} \left\|\sum_{i=1}^d x_i A_i(\omega)\right\|_1 \dint \omega.
    \end{align*}
    Thus, $\|\cdot\| = h(Z, \cdot)$ for an operanoid $Z$ associated to the random matrices $A_1, \ldots, A_d$.
    % \VC{Adjust notation.} \EO{I've defined the $\mathcal{L}^1$ norm before the Theorem now - is this ok?} \VC{Looks good!}
 \end{proof}

This functional-analytic perspective yields several examples and non-examples of zonoids and operanoids.  A result due to Lindenstrauss \cite[Corollary 2]{lindenstrauss1964extension}
%\VC{Give a citation here?} \EO{Done}
states that all two-dimensional Banach spaces are isometrically embeddable into $L^1([0,1], \mathbb{R})$, which implies the well-known fact that all centrally symmetric convex bodies in $\R^2$ are zonoids and, in turn, $m$-operanoids for all $m$.  More broadly, we will see in Section~\ref{sec:properties} that the preceding result facilitates examples of $2$-operanoids (in fact, $2$-operatopes) in $\R^2$ that are not zonoids.

% \VC{What would be a good example here that illustrates the functional-analytic viewpoint?  Should we state the Lindenstrauss result that all centrally symmetric convex bodies in $\R^2$ are isometric to subspaces to $\mathcal{L}_1$?}

% \EO{Give example of utility of this perspective for zonoids with Lindenstrauss result that all centrally symmetric convex bodies in $\R^2$ are isometric to subspaces to $\mathcal{L}_1$ and connect to the work of Heinavaara which is from the functional analytic perspective.}

% \VC{Give forward pointer to later subsection where this is used.}

\subsection{More Examples}\label{sec:rm-examples}
In this section, we describe examples of operanoids corresponding to $d$-tuples of well-known random matrix ensembles.

\subsubsection{Wigner random matrices}\label{sec:wigner}

A Wigner random matrix $A \in \mathbb{H}^m$ is a random Hermitian matrix with diagonal entries $\{a_{ii}\}_{i=1}^m$ that are i.i.d. zero mean real-valued random variables and off-diagonal entries $\{a_{ij}\}_{1 \leq i < j \leq m}$ that form another family of i.i.d. zero mean complex-valued random variables with finite second moment that are independent of the diagonals.
Let $A_1, \ldots, A_d$ be Wigner random matrices with entries $(A_i)_{jk}$ for $i = 1, \ldots, d$ and $j,k \in [m]$.
For all $j, k \in [m]$, define the random vectors $a^{(jk)} = ((A_1)_{jk}, \ldots, (A_d)_{jk}) \in \CC^d$. Then, the linear combination $\sum_{i=1}^d u_i A_i$ is a Wigner ensemble with $jk$-th entry $\langle u, a^{(jk)}\rangle$. 

%Since the entries are linear combinations of random variables, a nice class of Wigner matrices to consider is those where the entries are independent and have stable distributions. That is, assume the vector $a$ has iid coordinates that have a stable distribution, such as Gaussian, Cauchy, or L\'{e}vy. Then the entries of $\sum_{i=1}^d u_iA_i$ are iid random variables with a distribution from the same class now with parameters depending on $u$, that is, $\sum_{i=1}^d u_iA_i$ is a Wigner ensemble.

Now, consider the setting where the vectors $a^{(jk)}$ defined above are real-valued Gaussian vectors $a^{(jk)} \sim \mathcal{N}(0, \Sigma)$ for all $j < k$ and $a^{(jj)} \sim \mathcal{N}(0, 2\Sigma)$. Then, 
\[\langle u, a^{(jk)} \rangle \sim \mathcal{N}(0, u^T\Sigma u), \, j < k, \qquad \langle u, a^{(jj)} \rangle \sim \mathcal{N}(0, 2u^T\Sigma u),\]
and $\sum_{i=1}^d u_i A_i$ has the distribution of a Gaussian Orthogonal Ensemble (GOE) $O$ multiplied by the constant $u^T\Sigma u$. The support function of the $m$-operanoid $Z$ associated to $A_1, \ldots, A_d$ is then
\begin{align*}
   h_{Z}(u) = \EE\left[\left\|\sum_{i=1}^d u_iA_i \right\|_1\right] =  (u^T\Sigma u)^{1/2}\EE\left[ \left\| O \right\|_1\right],
\end{align*}
and thus $Z$ is an ellipsoid. In particular, $m$-operanoids for all $m \geq 1$ include ellipsoids. This is already known since zonoids $(m=1)$ include ellipsoids.
%\item Suppose $a \sim \mathcal{MC}(\mu, \Sigma)$, i.e. suppose $a$ has a multivariate Cauchy distribution. Recall that $a$ then has the same distribution as the vector $Y^{-1/2}z$, where $z \sim \mathcal{N}(0,I)$ and $Y$  has a chi-squared distribution. Then $\langle u,a \rangle$ has a Cauchy distribution.

\subsubsection{Uniform permutation matrices}\label{sec:permutation}

Permutation matrices are square matrices with a single `1' in each column and row, and `0's elsewhere. The collection of $m \times m$ permutation matrices enumerates all $m!$ permutations of the coordinates of an $m$-dimensional vector.  A uniform permutation matrix is a random matrix chosen uniformly from this collection. Permutation matrices are not symmetric in general, but using the Jordan-Weilandt matrix construction \cite{stewart1990matrix}, an $m \times m$ permutation matrix $A$ can be lifted to a $2m \times 2m$ symmetric matrix $$\tilde{A} := \begin{bmatrix} 0 & A \\ A^\star & 0 \end{bmatrix}$$ such that $\|A\|_1 = \frac{1}{2}\|\tilde{A}\|_1$. We are thus interested in the $2m$-operanoid specified by $m \times m$ uniform permutation matrices lifted to symmetric matrices via this construction. We do not have a closed form expression for the support function of these operanoids, but we can numerically approximate it by sampling random $2m$-operatopes specified by samples of $m \times m$ uniformly random permutation matrices for a large number of summands $N$. Figure \ref{fig:perm_operanoid} plots such a sample for $N = 150$ and $m = 3, 6$ and $10$.

\begin{figure}[h]
     \centering
     \begin{subfigure}[b]{0.25\textwidth}
         \centering
         \includegraphics[width=\textwidth]{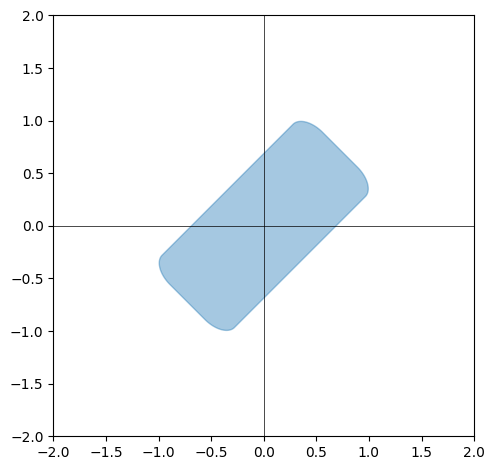}
         \caption{$m = 3$}
         \label{fig:3operanoid_perm}
     \end{subfigure}
     \hfill
     \begin{subfigure}[b]{0.25\textwidth}
         \centering
         \includegraphics[width=\textwidth]{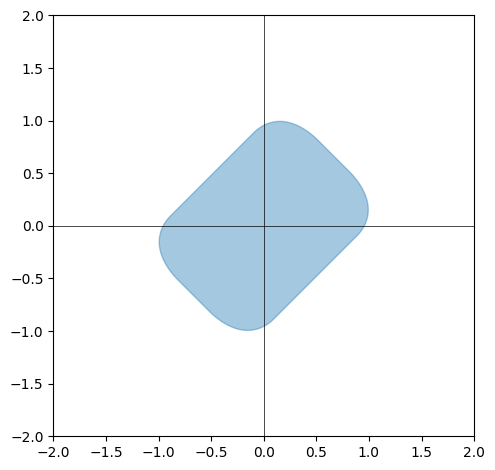}
         \caption{$m = 6$}
         \label{fig:6operanoid_perm}
     \end{subfigure}
     \hfill
     \begin{subfigure}[b]{0.25\textwidth}
         \centering
         \includegraphics[width=\textwidth]{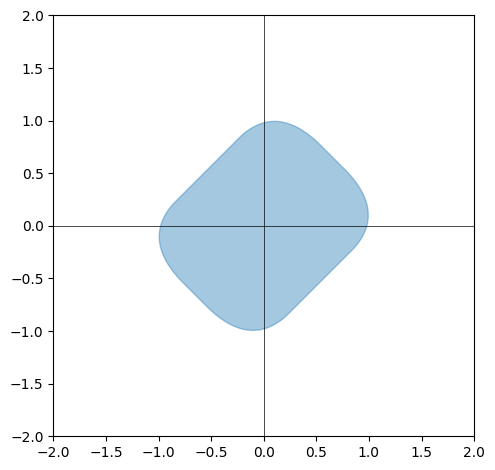}
         \caption{$m = 10$}
         \label{fig:10operanoid_perm}
     \end{subfigure}
        \caption{Samples of $6, 12$ and $20$-operatopes specified by i.i.d. uniformly random permutation matrices with $N = 150$ summands.}
        \label{fig:perm_operanoid}
\end{figure}

% \EO{Comment here on how to obtain these operanoids with symmetric matrices} \VC{Is this what you did above?}

\subsubsection{Wishart random matrices}\label{sec:wishart}

The Wishart distribution is a probability distribution over positive semidefinite matrices described as follows. For $\kappa > 0$ and  $m \in \mathbb{N}$, let $X$ denote a random  $\kappa m \times m$ matrix with i.i.d. entries that have zero mean and finite variance $\sigma^2$. Then, the $\kappa m \times \kappa m$ matrix $Y = \frac{1}{m\sqrt{\kappa}}XX^\star$ is a Wishart random matrix.  This ensemble is also known as a sample covariance matrix. 
As in the case of uniform permutation matrices, we do not have a closed form expression in general for support functions of the $\kappa m$-operanoids associated to a $d$-tuple of independent Wishart random matrices, but we can obtain an approximation by sampling the associated $\kappa m$-operatopes for a large number of summands $N$, see Figure \ref{fig:wishart_operanoid}.

\begin{figure}[h]
     \centering
     \begin{subfigure}[b]{0.25\textwidth}
         \centering
         \includegraphics[width=\textwidth]{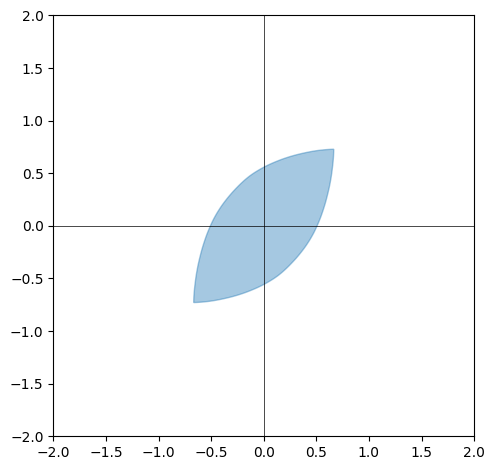}
         \caption{$m = 6, \kappa = 1/2$}
         \label{fig:3operanoid_wishart}
     \end{subfigure}
     \hfill
     \begin{subfigure}[b]{0.25\textwidth}
         \centering
         \includegraphics[width=\textwidth]{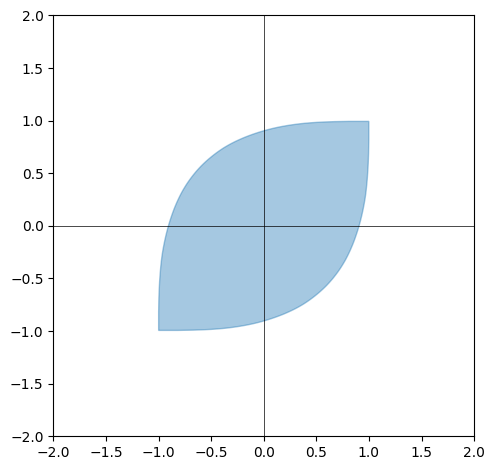}
         \caption{$m = 6, \kappa = 1$}
         \label{fig:6operanoid_wishart}
     \end{subfigure}
     \hfill
     \begin{subfigure}[b]{0.25\textwidth}
         \centering
         \includegraphics[width=\textwidth]{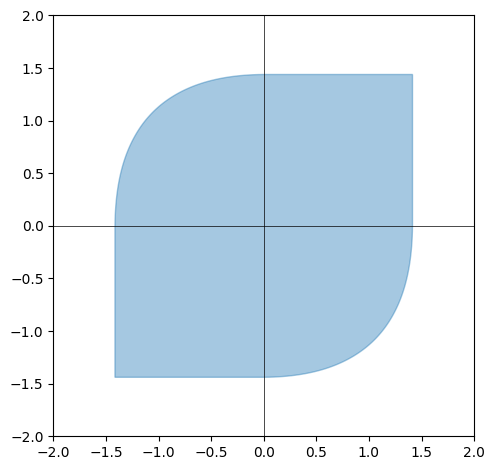}
         \caption{$m = 6, \kappa = 2$}
         \label{fig:10operanoid_wishart}
     \end{subfigure}
        \caption{Samples of $\kappa m$-operatopes with $N = 150$ summands associated to i.i.d. Wishart matrices with Gaussian entries where $m = 6$ and $\kappa = 1/2, 1, 2$ from left to right.
        % \VC{Should we delete this last sentence?} \EO{Yes, its deleted}
        }
        \label{fig:wishart_operanoid}
\end{figure}

\section{Properties of Operanoids} \label{sec:properties}
In this section we investigate various geometric and analytic aspects of operanoids.  Section \ref{sec:faces} discusses the facial structure of operatopes and operanoids, while Section \ref{sec:relation} presents a comparison with respect to other families of convex bodies.  Finally, Section \ref{sec:approximation} generalizes a result of \cite{bourgain_approximation_1989} on approximations of zonoids by zonotopes to prove rates on how well operatopes can approximate operanoids.

%%%%%%%%%%%%%%%%%%%%%%%%%%%%%%%%%%%%%%%%%%%%%%%%%%%%%%%%%%

\subsection{Facial Structure}\label{sec:faces}

%%%%%%%%%%%%%%%%%%%%%%%%%%%%%%%%%%%%%%%%%%%%%%%%%%%%%%%%%%

In this subsection, we investigate the facial structure of operanoids.  An \emph{exposed face} of a convex set $K$ is a subset $F \subseteq K$ such that $F$ is equal to the intersection of $K$ and a hyperplane that supports $K$; equivalently, $F$ is equal to the set of maximizers of a linear functional over $K$.  We recall here a key observation pertaining to exposed faces of a convex set $K$ that is expressed as the image under a linear map $\mathcal{L}$ of a convex set $\tilde{K}$ -- the preimage under $\mathcal{L}$ of an exposed face $F$ of $K$ is an exposed face of $\tilde{K}$, i.e., $\mathcal{L}^{-1}(F) \cap \tilde{K}$ is an exposed face of $\tilde{K}$.

We begin by reviewing some results from the literature on zonoids.  First, we observe that as a zonotope is a linear image of a $\infty$-norm ball and the faces of an $\infty$-norm ball are themselves (shifted) $\infty$-norm balls, all the faces of a zonotope are also zonotopes .  More broadly, all the exposed faces of a zonoid are also zonoids \cite[Theorem 3.2]{bolker_class_1969}.
% \VC{Citation to Bolker for this latter result?} \EO{Done.}
To make progress on the facial structure of operanoids, we state the following result on faces of an operator norm ball \cite[Theorem 4]{So_1990}.

% \VC{Do we want to say that any polytope with centrally-symmetric two-dimensional faces must be a zonotope \cite[Theorem 3.5.2]{Schneider_2013}, or that any polytope that is also a zonoid must be a zonotope \cite{Schneider_2013}?  These are nice, but not things we are able to extend directly.}

% \EO{Check the results below can be written for operator norm ball in $\mathbb{H}^m$} \VC{I will look into this.}

\begin{lemma}\label{thm:opball_faces}
A subset $F \subseteq B_{\infty}(\mathbb{H}^m)$ of an operator norm ball is a closed proper (non-empty) exposed face of $B_{\infty}(\mathbb{H}^m)$ if and only if there exists $r \in \{1,\dots,m - 1\}$
% \VC{Should this be $m-1$ instead of $m$, since we're talking proper faces?} \EO{Yes, I updated this.}
and unitary matrices $U$ and $V$ such that $$F = \left\{U\begin{bmatrix} I_r & 0 \\ 0 & A \end{bmatrix} V : A \in B_{\infty}(\mathbb{H}^{m-r})\right\}.$$.
\end{lemma}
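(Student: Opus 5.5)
The plan is to compute exposed faces directly as sets of maximizers of linear functionals, using the duality $h_{B_\infty}(C) = \|C\|_1$ recalled in Section~\ref{sec:notation}. A closed proper non-empty exposed face of $B_\infty(\mathbb{H}^m)$ is, by definition, a set
$$F_C := \operatorname{argmax}\{\langle C, X\rangle : X \in B_\infty(\mathbb{H}^m)\}$$
for some nonzero $C \in \mathbb{H}^m$. (We take $C \neq 0$ since $C=0$ gives the whole ball.) So the task reduces to describing $F_C$ explicitly and then checking that every set of the displayed form arises this way.

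\textbf{Step 1: compute $F_C$.} Diagonalize $C = Q\Lambda Q^\star$ with $Q$ unitary and $\Lambda = \mathrm{diag}(\lambda_1,\dots,\lambda_m)$. Order the eigenvalues so that the $r = \mathrm{rank}(C) \geq 1$ nonzero ones come first. Write $Y = Q^\star X Q$, which is Hermitian with $\|Y\|_{op}\le 1$. Then
$$\langle C, X\rangle = \sum_{i} \lambda_i Y_{ii} \le \sum_i |\lambda_i|\,|Y_{ii}| \le \sum_i |\lambda_i| = \|C\|_1 ,$$
using $|Y_{ii}| \le \|Y\|_{op} \le 1$. Equality holds if and only if $Y_{ii} = \mathrm{sgn}(\lambda_i)$ for every $i \le r$.

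The key observation is that a contraction $Y$ with $|Y_{ii}|=1$ must have all other entries of row $i$ and column $i$ equal to zero. This follows from
$$1 \ge \|Ye_i\|_2^2 = \sum_j |Y_{ji}|^2 = 1 + \sum_{j\neq i}|Y_{ji}|^2 ,$$
together with Hermitian symmetry. Hence $Y = \begin{bmatrix} D & 0 \\ 0 & A\end{bmatrix}$, where $D = \mathrm{diag}(\mathrm{sgn}\lambda_1,\dots,\mathrm{sgn}\lambda_r)$ and $A \in B_\infty(\mathbb{H}^{m-r})$ is arbitrary. Every such $Y$ is feasible and attains equality. Setting $U = Q$ and $V = \mathrm{diag}(D, I_{m-r})\,Q^\star$, both unitary, we get $X = U\,\mathrm{diag}(I_r, A)\,V$, which is exactly the displayed form.

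\textbf{Step 2: properness and the range of $r$.} When $r = m$, i.e.\ $C$ is invertible, the face is a single point. I would note that the statement's range $r \in \{1,\dots,m-1\}$ should be read as covering the faces with nonempty relative-interior block $A$, with $r=m$ (exposed points) as the degenerate case of the same formula in which the $A$-block is empty. $r \geq 1$ is automatic from $C \neq 0$.

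\textbf{Step 3: converse.} Given $r$ and unitaries $U,V$ for which the displayed set lies in $\mathbb{H}^m$, I would argue that $V$ must be of the form $\mathrm{diag}(D',W)\,U^\star$. Here $D'$ is a diagonal sign matrix on the first $r$ coordinates, and $W$ acts on the last $m-r$ coordinates and must itself be the identity (up to the allowed reparametrization $A \mapsto AW$ in the $A$-block) for Hermiticity to hold. To extract this, take $A = 0$ and $A = \pm I$ and compare the resulting Hermitian matrices. Then define $C := U\,\mathrm{diag}(D', 0)\,U^\star$, and Step 1 shows $F = F_C$.

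The main obstacle is this bookkeeping in Step 3: pinning down exactly which pairs $(U,V)$ produce subsets of $\mathbb{H}^m$, since the lemma is phrased with general unitaries, as in the non-Hermitian result of \cite{So_1990}. Step 1 itself is routine. If the direct argument in Step 3 becomes messy, I would instead restrict to the canonical choice $V = \mathrm{diag}(D,I)\,U^\star$ and observe that any other admissible pair yields the same set.
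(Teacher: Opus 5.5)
\paragraph{Comparison with the paper.} The paper gives no argument of its own for this lemma. It imports the statement from So's theorem \cite{So_1990}, which describes faces of the operator-norm ball of general (not necessarily Hermitian) square matrices; that is why two independent unitaries $U,V$ appear. The paper leaves the transfer to $\mathbb{H}^m$ implicit: for Hermitian $C$ the maximum of $\langle C,X\rangle$ over all contractions is $\|C\|_1$ and is attained at Hermitian $X$, so the Hermitian face is So's face intersected with $\mathbb{H}^m$.

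Your Step 1 replaces this with a short self-contained argument: diagonal entries of a contraction are bounded by $1$, and a unit diagonal entry kills its row and column. The argument is correct. It also yields the sharper normal form $F_C=\{Q\,\mathrm{diag}(D,A)Q^\star : A\in B_\infty(\mathbb{H}^{m-r})\}$, i.e.\ one may always take $V=\mathrm{diag}(D,I)U^\star$.

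Your Step 2 remark should be stated as a correction rather than a reading convention. For invertible $C$ the face is a point; e.g.\ $C=I$ exposes $\{I\}$. Every set in the lemma with $r\le m-1$ has dimension $(m-r)^2\ge 1$. So the lemma as written is false, and the range must be $r\in\{1,\dots,m\}$, which is exactly what the paper itself uses in the proof of Proposition~\ref{prop:operanoid_faces}.

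\paragraph{A false claim in Step 3.} Put $W:=VU$. Every element of $F$ is Hermitian iff $\mathrm{diag}(I_r,A)W$ is Hermitian for all $A\in B_\infty(\mathbb{H}^{m-r})$. Taking $A=0$ and $A=I$ (together with unitarity of $W$) gives $W=\mathrm{diag}(S,W_{22})$, with $S$ an $r\times r$ Hermitian unitary and $W_{22}$ Hermitian.

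However, $S$ need not be diagonal. For $m\ge 3$, take $U=I$, $V=\mathrm{diag}\bigl(\begin{bmatrix}0&1\\1&0\end{bmatrix},I_{m-2}\bigr)$ and $r=2$. This gives a legitimate face $\{\mathrm{diag}(S,A)\}$, but $V$ is not of your form $\mathrm{diag}(D',W)U^\star$ with $D'$ a diagonal sign matrix, so your choice $C=U\,\mathrm{diag}(D',0)U^\star$ is not available. Likewise, the tests $A=0,\pm I$ do not force $W_{22}=\pm I$. That requires $AW_{22}=W_{22}A$ for every $A\in\mathbb{H}^{m-r}$, which follows from Hermiticity of $AW_{22}$ for all $A$ in the ball. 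Then $W_{22}$ is a scalar Hermitian unitary.

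\paragraph{The fix.} Take $C:=U\,\mathrm{diag}(I_r,0)V=U\,\mathrm{diag}(S,0)U^\star$. This is the $A=0$ point of $F$, hence Hermitian by hypothesis. Write $S=PDP^\star$ and apply your Step 1 with $Q=U\,\mathrm{diag}(P,I)$. This gives $F_C=\{U\,\mathrm{diag}(S,A)U^\star\}$, which equals $F=\{U\,\mathrm{diag}(S,AW_{22})U^\star\}$ because $W_{22}=\pm I$; the sign is absorbed by $A\mapsto -A$. With this repair your argument is complete.
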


In words, the exposed faces of an operator norm ball are shifted operator norm balls, very much in analogy to the situation with $\infty$-norm balls.  Based on this characterization, we prove next a result about the faces of operanoids.

\begin{proposition}\label{prop:operanoid_faces}
    Each exposed face of an $m$-operanoid is an $m$-operanoid.  Further, each exposed face of an $m$-operatope is an $m$-operatope.
\end{proposition}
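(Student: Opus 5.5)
I would treat the operatope statement first and then obtain the operanoid statement from the expectation characterization in Theorem~\ref{t:zonoid_vitale_characterization}. The basic tool is that support sets behave well under Minkowski sums and linear images. For a convex body $K$ and a direction $w$, write $F(K,w)$ for the set of maximizers of $\langle w,\cdot\rangle$ over $K$. Then $F(K+L,w)=F(K,w)+F(L,w)$ and $F(\mathcal{A}(\tilde K),w)=\mathcal{A}(F(\tilde K,\mathcal{A}^\star w))$.

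For an $m$-operatope $Z=v+\sum_{j=1}^N \mathcal{A}^{(j)}(B_\infty)$, every exposed face therefore has the form
\[F(Z,w)=v+\sum_{j=1}^N \mathcal{A}^{(j)}\bigl(F(B_\infty,\mathcal{A}^{(j)\star}w)\bigr).\]
The next step is to describe each face $F(B_\infty,M)$ with $M=\sum_i w_iA^{(j)}_i$ explicitly, either through Lemma~\ref{thm:opball_faces} or directly. Diagonalize $M=U\,\mathrm{diag}(\lambda)\,U^\star$ and let $P_+,P_-,P_0$ be the spectral projections onto the positive, negative and zero eigenspaces. Then
\[F(B_\infty,M)=P_+-P_-+\{X\in B_\infty(\mathbb{H}^m): X=P_0XP_0\}.\]
This is a translate of an isometric copy of $B_\infty(\mathbb{H}^{k})$, where $k=\dim\ker M$. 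Up to the harmless rotation $U$, the set embeds as the corner block.

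To stay inside $\mathbb{H}^m$, I would embed $B_\infty(\mathbb{H}^k)$ as the image of $B_\infty(\mathbb{H}^m)$ under the linear compression $Y\mapsto P_0YP_0$. This map sends $B_\infty(\mathbb{H}^m)$ onto $\{X\in B_\infty : X=P_0XP_0\}$: compressions are contractions, and the corner block extends by zero. Hence each summand $\mathcal{A}^{(j)}(F(B_\infty,\cdot))$ is an affine image of $B_\infty(\mathbb{H}^m)$, and their Minkowski sum is an $m$-operatope. It is centrally symmetric, with center $v+\sum_j\mathcal{A}^{(j)}(P_+^{(j)}-P_-^{(j)})$. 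If $M=0$ the whole ball is the face, and if $M$ is invertible the face is a point, which is a degenerate operatope (take the zero linear map). So the definition's allowance for arbitrary affine images covers these edge cases.

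For a general $m$-operanoid I would use Theorem~\ref{t:zonoid_vitale_characterization} to write $h_Z(u)=\EE\|\sum_i u_iA_i\|_1$, and use the random analogue of the face formula:
\[F(Z,w)=\EE\bigl[F(\mathcal{A}(B_\infty),w)\bigr].\]
This should follow from the directional derivative of support functions: $h_{F(K,w)}(u)=h_K'(w;u)$. One differentiates under the expectation by dominated convergence, since difference quotients of the sublinear function $\|\cdot\|_1$ are bounded by $\|\sum_iu_iA_i\|_1$, which is integrable by the moment assumption. By the operatope step, each random face is $\mathcal{A}(P_+-P_-)+\mathcal{A}\circ\mathcal{C}(B_\infty)$, where $\mathcal{C}$ is the random compression onto $\ker(\mathcal{A}^\star w)$. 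Its expectation is the translate $\EE[\mathcal{A}(P_+-P_-)]$ plus the body with support function $\EE\|\mathcal{C}^\star\mathcal{A}^\star u\|_1$, using the tuple $P_0A_iP_0$. That body is an $m$-operanoid, again by Theorem~\ref{t:zonoid_vitale_characterization}, with the moment condition preserved because compression does not increase norms.

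The main obstacles are measurability of $\omega\mapsto P_0(\omega)$ and handling the translate so that it fits the centrally symmetric convention. For measurability I would note that $P_0$ is a Borel function of the matrix: for example, $P_0=\lim_{t\to0}(I-M^2(M^2+tI)^{-1})$. The translate is handled by the definition's allowance of a center, after translating back to the origin.
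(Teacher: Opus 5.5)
Your proposal is correct and follows essentially the same route as the paper. Faces of a linear image of $B_\infty$ are images of faces of $B_\infty$, and those are translated smaller operator norm balls. For operanoids, the directional-derivative formula $h_{F(K,w)}=h_K'(w;\cdot)$ combined with dominated convergence writes the face as the expectation of the random faces, which is a translated operanoid by Theorem~\ref{t:zonoid_vitale_characterization}. Your compression $Y\mapsto P_0YP_0$ keeps everything inside $\mathbb{H}^m$ rather than passing to an $(m-r)$-operanoid with $r$ random, and your measurability remark fills a detail the paper leaves implicit; both are useful tightenings.
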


\begin{proof}
We begin by making an observation about preimages of exposed faces of a linear image of an operator norm ball.  Let $F \subseteq \mathcal{L}(B_{\infty}(\mathbb{H}^m)) \subseteq \mathbb{R}^d$ be a face of the image of the operator norm ball under a linear map $\mathcal{L}$.  From our discussion above, the preimage $\mathcal{L}^{-1}(F)$ of $F$ must be an exposed face of $B_{\infty}(\mathbb{H}^m)$. By Lemma \ref{thm:opball_faces}, there exists $r \in \{1,\dots, m\}$ and unitary matrices $U$ and $V$ such that %All $k$-faces of an $m$-operatope of order $N = 1$ are linear images of a face of $B_{\infty}^m$ and have the form
\begin{align*}
    \mathcal{L}^{-1}(F) = \left\{U\begin{bmatrix} I_r & 0 \\ 0 & A \end{bmatrix} V : A \in B_{\infty}(\mathbb{H}^{m-r})\right\}.
\end{align*}
Thus, there are matrices $B_1, \ldots, B_d \in \mathbb{H}^{m}$ such that
\begin{align}\label{e:face_supp}
    &F =  \left\{ \left(\langle B_1, U\begin{bmatrix} I_{r} & 0 \\ 0 & A \end{bmatrix}V  \rangle, \ldots, \langle B_d, U\begin{bmatrix} I_{r} & 0 \\ 0 & A \end{bmatrix}V  \rangle \right) : A \in B_{\infty}(\mathbb{H}^{m-r})\right\} \nonumber\\
    &=  \left\{ \left(\langle U^{\star}B_1V^{\star}, \begin{bmatrix} I_{r} & 0 \\ 0 & A \end{bmatrix}  \rangle, \ldots, \langle U^{\star}B_dV^{\star}, \begin{bmatrix} I_{r} & 0 \\ 0 & A \end{bmatrix}  \rangle \right) : A \in B_{\infty}(\mathbb{H}^{m-r})\right\} \nonumber\\
    &=\left\{ \left(\mathrm{Tr}([U^{\star}B_1V^{\star}]_{r} + \langle [U^{\star}B_1V^{\star}]_{m-r}, A \rangle, \ldots, \mathrm{Tr}([U^{\star}B_dV^{\star}]_{r}) + \langle [U^{\star}B_dV^{\star}]_{m-r}, A \rangle \right) : A \in B_{\infty}(\mathbb{H}^{m-r})\right\}
    %\left(\mathrm{Tr}([U^{\star}B_1V^{\star}]_{r}), \ldots, \mathrm{Tr}([U^{\star}B_dV^{\star}]_{r})\right) + \left\{ \left(\langle [U^{\star}B_1V^{\star}]_{m-r}, A \rangle, \ldots, \langle [U^{\star}B_dV^{\star}]_{m-r}, A \rangle \right) : A \in B_{\infty}(\mathbb{H}^{m-r})\right\},
\end{align}
where $[A]_{r}$ for $A \in \mathbb{H}^{m}$ is the $r \times r$ principal submatrix of $A$ corresponding to rows/columns $\{1, \ldots, r\}$ of $A$ and $[A]_{m-r}$ is the $(m-r) \times (m-r)$ principal submatrix corresponding to rows/columns $\{r+1, \ldots, m\}$.
% \VC{Should this last bit be $\{r+1,\dots,m\}$?} \EO{Yes, fixed.}

Now let $F_{\xi}$ be an exposed face of an $m$-operanoid $K$. By Theorem 1.7.2 in \cite{schneider_2013},
\begin{equation}\label{eq:face-limit}
    \begin{aligned}
        h(F_{\xi}, u) &= \lim_{\lambda \to 0^+} \frac{\EE\left[\left\|\sum_{i=1}^d (\xi_i + \lambda u_i)A_i \right\|_1 \right] - \EE\left[\left\|\sum_{i=1}^d \xi_i A_i \right\|_1\right]}{\lambda} \\
        &=  \EE\left[ \lim_{\lambda \to 0^+} \frac{\left\|\sum_{i=1}^d (\xi_i + \lambda u_i)A_i \right\|_1 - \left\|\sum_{i=1}^d \xi_i A_i \right\|_1}{\lambda} \right] = \EE[h(F_{\xi,A}, u)],
    \end{aligned}
\end{equation}
where $F_{\xi,A}$ is the face of a linear image $\mathcal{L}_A$ of $B_{\infty}(\mathbb{H}^m)$. Above we have used the Dominated Convergence Theorem to switch the limit and expectation. By Theorem \ref{thm:opball_faces} and \eqref{e:face_supp}, there exist unitary $U$ and $V$ such that 
\begin{align*}
    h(F_{\xi, A},u) = \langle u, \left(\mathrm{Tr}([U^\star A_1 V^\star]_{r}), \ldots, \mathrm{Tr}([U^\star A_d V^\star]_{r})\right) \rangle + \left\|\sum_{i=1}^d u_i [U^\star A_i V^\star]_{m-r}\right\|_1.
\end{align*}
Thus $F_{\xi}$ is the expected value of $F_{\xi,A}$, and 
\begin{align*}
 h(F_{\xi}, u)  =   \left\langle u, \left(\EE\left[\mathrm{Tr}([U^\star A_1 V^\star]_{r})\right], \ldots, \EE\left[\mathrm{Tr}([U^\star A_d V^\star]_{r})\right] \right) \right\rangle + \EE\left[\left\|\sum_{i=1}^d u_i [U^\star A_iV^\star]_{m-r}\right\|_1\right], 
\end{align*}
showing the $F_{\xi}$ is a translated $(m-r)$-operanoid, and is therefore an $m$-operanoid.

Finally, in the preceding paragraph, if $K$ is an operatope rather than a general operanoid, then the expectations in the first line \eqref{eq:face-limit} are finite sums.  Therefore, all subsequent expectations are also given by finite sums, and we can conclude that each exposed faces of an $m$-operatope is also an $m$-operatope.
\end{proof}

% \VC{Do we want to non-examples of operanoids right here to illustrate the utility of this result?  Of should we defer this to later?}

We illustrate one consequence of this result.  It is well-known that if a polytope is a zonoid, then it must be a zonotope \cite{bolker_class_1969}.  Based on the above proposition, we extend this observation to operanoids.  Specifically, if a polytope is an $m$-operanoid for any $m$, then all its faces must also be $m$-operanoids and hence centrally-symmetric.  Therefore, this polytope must be a zonotope, because we have from \cite[Theorem 3.5.1]{schneider_2013} that a polytope with centrally-symmetric two-dimensional faces must be a zonotope.

\subsection{Relation between Operanoids and Other Families of Convex Bodies}\label{sec:relation}

In this subsection, we investigate the relationship between operatopes/operanoids and other families of convex bodies.  As a first remark, the examples from the preceding subsections (e.g., ellipsoids and their Minkowski sums) imply that there clearly exist operatopes that are not zonotopes.  A natural next question is whether all zonoids are $m$-operatopes for some $m$.  It is instructive to consider this question in $\R^2$, as all centrally symmetric convex bodies in $\R^2$ are zonoids \cite[Theorem 5.4]{bolker_class_1969}.  However, $m$-operatopes must be semialgebraic as they are expressed as linear images of a semialgebraic convex body.  As there are clearly centrally symmetric convex bodies in $\R^2$ that are not semialgebraic, e.g., $\ell_p$ balls for $p$ irrational, there are zonoids that are not $m$-operatopes for any $m$.  Beyond this basic observation, one source of difficulty is that the question of whether a particular convex body is a zonoid is challenging, and it is known as the \emph{zonoid problem} \cite{Bolker_Zonoid_Problem}.  This difficulty is also inherited by operanoids.  Nevertheless, we leverage the characterizations from the preceding subsections to present three sets of observations on the relationship between operanoids and zonoids and between operanoids and general families of convex bodies.

\paragraph{Operanoids in $\R^2$}  As all $m$-operanoids in $\R^2$ are zonoids, it is natural to ask how to express the support function of $m$-operanoids in $\RR^2$ in the form of the support function of a zonoid.  For an $m$-operatope $Z$ centered at the origin with one summand, we have that
$$h_Z(u) = \left\|u_1A_1 + u_2A_2 \right\|_1.$$
As $Z$ is also a zonoid, it must be the case that $h_Z(u) = \int_{\RR^2} |\langle u,x \rangle| \dint \mu(x)$ for some measure $\mu$ that depends on $A_1, A_2$.  An explicit expression for such a $\mu$ is given by the \emph{tracial joint spectral measure} of $A_1$ and $A_2$ as defined in \cite{heinavaara2025tracial}.  The same work provides a characterization of the singular and continuous parts of the joint spectral measure.  To obtain an analogous zonoidal description of an $m$-operanoid $Z$ specified now in terms of a pair of random matrices $A_1, A_2$, the corresponding measure $\mu$ is the mean measure $\Lambda$ of a \emph{random} tracial joint spectral measure associated to $A_1, A_2$:
$$h_Z(u) = \EE\left[\left\|u_1A_1 + u_2A_2 \right\|_1\right] = \EE\left[\int_{\RR^2} |\langle u,x \rangle| \dint \mu(x)\right] = \int_{\RR^2} |\langle u,x \rangle| \dint \Lambda(x).$$
% \VC{Can we say a bit more explicitly what $\mu$ and $\Lambda$ is in terms of the random $A_1, A_2$?}
% \EO{The precise form is given in Heinevaara's paper - would just a more explicit reference suffice? It's a bit complicated to copy here I think, but we could.}

\paragraph{Operanoids and zonoids in higher dimensions} For dimensions $d \geq 3$, zonoids form a strict subclass of convex bodies and the question we consider is whether there are any $m$-operanoids that are not zonoids in $\R^3$.  We answer this question in the affirmative by appealing to the recent work \cite{HEINAVAARA2024} which proved the following result.
%\EO{We first recall that $S^1$ denotes the Schatten-1 trace-class, the set of compact operators on a complex separable Hilbert space. }

\begin{theorem}[Theorem 1.0.3 in \cite{HEINAVAARA2024}]
%Let $S^1$ be the Schatten-1 trace class of operators on some Hilbert space. 
The space of real symmetric $2 \times 2$ matrices endowed with the trace norm is \emph{not} linearly isometric to any subspace of $L^1([0,1],\RR)$.
\end{theorem}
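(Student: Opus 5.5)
We plan to reduce the statement to a positivity test for an explicit function on $\mathbb{S}^2$ and show that the test fails. By \eqref{eq:2-operatope}, in the coordinates $x=a+c$, $y=a-c$, $z=2b$ the trace norm of $\begin{bmatrix} a & b\\ b & c\end{bmatrix}$ is
\[
N(x,y,z)=\max\Bigl\{|x|,\sqrt{y^2+z^2}\Bigr\}.
\]
So the space in question is $(\RR^3,N)$. Its unit ball is a solid cylinder. The norm $N$ is the support function of the double cone $K=\mathrm{conv}\bigl([-e_1,e_1]\cup (\{0\}\times B^2)\bigr)$. By the zonoid characterization recalled above (Vitale, and Bolker's $L^1$ theorem, i.e.\ the $m=1$ case of Theorem~\ref{t:L1-subspace}), the claim is equivalent to showing that $K$ is \emph{not} a zonoid. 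Equivalently, there is no even positive measure $\rho$ on $\mathbb{S}^2$ with $N(u)=\int_{\mathbb{S}^2}|\langle u,v\rangle|\,\dint\rho(v)$.

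Since $N$ is even and continuous, it is the cosine transform of a unique even \emph{signed} measure (or distribution) $\rho$ on $\mathbb{S}^2$. The task is to exhibit a region where $\rho$ is negative. I would use the classical identity on $\mathbb{S}^2$
\[
(\Delta_{\mathbb{S}^2}+2)\,\mathcal{C}\rho = 2\,\mathcal{R}\rho,
\]
where $\mathcal{C}$ is the cosine transform and $\mathcal{R}$ is the Funk (great-circle) transform. This gives $\rho=\tfrac12\mathcal{R}^{-1}(\Delta+2)N$. Everything is invariant under rotations about the $x$-axis and under $x\mapsto -x$, so all objects reduce to functions of the latitude $\theta$, where $u_1=\sin\theta$.

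The computation of $(\Delta+2)N$ has three pieces.
\begin{itemize}
\item On the polar caps $|\theta|>\pi/4$ we have $N=|u_1|$. Here $(\Delta+2)N$ is a point mass at the poles: the segment generates a zonoid, so this part is harmless.
\item On the equatorial band $N=\sqrt{u_2^2+u_3^2}=\cos\theta$. Here $(\Delta+2)\cos\theta$ is an explicit smooth function of $\theta$, and it is in fact zero on the open band. This matches the fact that a disc is the zonoid whose measure is carried by the great circle $\{u_1=0\}$, the circle orthogonal to the disc.
\item On the two latitude circles $\theta=\pm\pi/4$ the function $N$ has a convex kink. This contributes a positive singular measure on these circles, with density given by the jump in the normal derivative of $N$ across them.
\end{itemize}

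The main step is to invert the Funk transform on this zonal data. For zonal functions, $\mathcal{R}$ becomes an Abel-type integral transform in the variable $t=\sin\theta$, with an explicit inverse involving a derivative. The delta masses on the circles $|u_1|=1/\sqrt2$ produce, after inversion, a term like $\frac{d}{dt}$ of a $(t^2-\tfrac12)_+^{-1/2}$-type kernel. I expect this term to be negative in a neighbourhood of these circles (on the equatorial side, near $|t|=1/\sqrt2$), and I do not expect the other pieces to compensate there. If the direct inversion gets messy, I would instead use the equivalent Fourier-analytic test (Koldobsky): a $3$-dimensional norm embeds in $L^1$ if and only if the distributional Fourier transform of $N$ is non-positive on $\RR^3\setminus\{0\}$. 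For the cylinder norm $N$ this Fourier transform can be computed semi-explicitly from the radial profile of the cylinder via Bessel functions. We would then show that it takes a positive value in some direction, say near the cone $|\xi_1|=|(\xi_2,\xi_3)|$.

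The main obstacle is this explicit sign computation: getting the distributional inversion right (constants, the contribution of the kinks, and regularization at the poles) and certifying strict negativity of $\rho$ somewhere. A cheaper fallback is to test $\rho$ against a well-chosen nonnegative test function $g$. One looks for $g$ whose cosine transform $\mathcal{C}g$ can be written down. Since $\int N\,g=\int \mathcal{C}g\,\dint\rho$, finding a $g\ge0$ with $\int N g$ small enough that it contradicts $\rho\ge0$ would prove the theorem.
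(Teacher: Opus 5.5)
The paper gives no proof of this statement; it imports it from \cite{HEINAVAARA2024}, who proves a general $L^p$ version. So your proposal has to stand on its own, and as written it has a gap.

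Your reduction is sound. $N=\max\{|x|,\sqrt{y^2+z^2}\}$ is the support function of the double cone $K$. The claim is equivalent to the nonexistence of an even measure $\rho\ge0$ with $N=\mathcal{C}\rho$, and the identity $(\Delta+2)\mathcal{C}\rho=2\mathcal{R}\rho$ is correct.

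The gap is the step that carries all the weight. The sign of $\rho$ is only ``expected'', and two of your three inputs for it are wrong. Write $N=g(s)$ with $s=u_1$, so that $\Delta g=\frac{d}{ds}\bigl[(1-s^2)g'(s)\bigr]$.
\begin{itemize}
\item On the open polar caps, $N=\pm u_1$ is the restriction of a linear function. Hence $(\Delta+2)N=0$ there, with no mass at the poles. The point masses $\delta_{\pm e_1}$ are the \emph{generating measure} of the segment; by contrast, $(\Delta+2)|u_1|$ is twice arc length on the equator.
\item On the band, $(\Delta+2)\sqrt{1-s^2}=(1-s^2)^{-1/2}=1/\cos\theta>0$, not $0$. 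The disc's generating measure sits on the equator, but its Funk transform does not.
\item Only your kink term $\delta(s-\tfrac{1}{\sqrt2})+\delta(s+\tfrac{1}{\sqrt2})$ is right.
\end{itemize}
With the correct data, $\mathcal{R}\rho$ vanishes at every $u$ in the caps. The Abel equation then forces $\rho=0$ on the band $|v_1|<1/\sqrt2$, which is exactly the region swept out by the great circles $u^\perp$ for $u$ in a cap. So negativity ``on the equatorial side'' cannot occur; the kernel $(t^2-\tfrac12)_+^{-1/2}$ you mention is itself supported on the polar side. Carrying out the inversion in $\tau=v_1^2$ gives two pieces. There is a singular term on the circles $|v_1|=1/\sqrt2$ that is not even a measure. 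There is also the density $-\bigl(8\sqrt2\,\pi\,|v_1|\,(v_1^2-\tfrac12)^{3/2}\bigr)^{-1}<0$ on $1/\sqrt2<|v_1|<1$. So your method does work once corrected, but the negativity lives on the polar side.

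You can also close the gap with no inversion at all. Suppose $\rho\ge0$ and $u_0$ lies in a cap. Then for small $w$,
\[
0=N(u_0+w)+N(u_0-w)-2N(u_0)=2\int\bigl(|\langle w,v\rangle|-|\langle u_0,v\rangle|\bigr)_+\,\dint\rho(v),
\]
so $\rho$ vanishes on a neighbourhood of the great circle $u_0^\perp$. These circles sweep out the band, so $\rho$ is carried by $\{|v_1|\ge1/\sqrt2\}$. Then $1=N(e_1)=\int|v_1|\,\dint\rho\ge\rho(\mathbb{S}^2)/\sqrt2$. At the same time,
\[
2\pi=\int_0^{2\pi}N(0,\cos\phi,\sin\phi)\,\dint\phi=4\int\sqrt{1-v_1^2}\,\dint\rho(v)\le\tfrac{4}{\sqrt2}\,\rho(\mathbb{S}^2)\le 4,
\]
a contradiction.

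This is the duality test your last fallback is reaching for, but the positivity must be on $\mathcal{C}g$, not on $g$. You need a \emph{signed} $g$ with $\mathcal{C}g\ge0$ where $\rho$ lives and $\int N\,\dint g<0$. Here $g=4\delta_{e_1}$ minus arc length on the equator, so $\mathcal{C}g(v)=4\bigl(|v_1|-\sqrt{1-v_1^2}\bigr)$ and $\int N\,\dint g=4-2\pi$. With $g\ge0$ alone there is nothing to contradict, since $\int Ng\ge0$ automatically.
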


% in two ways.  First, it is shown that that are subspaces of the trace class of operators on a Hilbert space that are not linearly isometric to $L^1([0,1],\mathbb{R})$.  Second, the above result was proved 

The results in \cite{HEINAVAARA2024} are actually stated more generally for any $L^p$ such that $p \in [1, \infty)$, $p \neq 2$, but the preceding statement with $p=1$ is all we need for our arguments.  This theorem implies that there is a $2$-operanoid in $\RR^3$ that is \emph{not} a zonoid, based on Theorem~\ref{t:L1-subspace}.  In particular, we can extract from \cite{HEINAVAARA2024} the conclusion that the $2$-operatope with support function
\begin{align*}
    h(u) = \left\| \begin{bmatrix} u_3 + u_1 & u_2 \\ u_2 & u_3 - u_1 \end{bmatrix} \right\|_1,
\end{align*}
is not a zonoid (see Figure \ref{fig:nonzonoid}).  By the computation in \eqref{eq:2-operatope}, this is the support function of the convex hull of a $2$-dimensional disc and a perpendicular line segment.
%This convex body is linearly isomorphic to the $2 \times 2$ operator norm ball over symmetric matrices.

\begin{figure}[h]
    \centering    \includegraphics[width=0.25\linewidth]{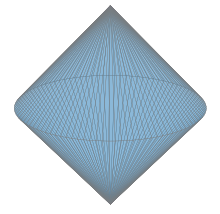}
    %\vspace{-0.5in}
    \caption{A 3-dimensional 2-operatope that is not a zonoid.}
    \label{fig:nonzonoid}
\end{figure}

\paragraph{Convex bodies that are not operanoids} For dimensions $d \geq 3$ zonoids form a strict subclass of convex bodies, motivating the study of this class of convex bodies.  We show next that there exist centrally symmetric convex bodies in $\RR^d$ for $d \geq 3$ that are not $m$-operanoids.  Combined with the above observation that there exist $m$-operanoids that are not zonoids in $\R^3$ for any $m \geq 2$, our results provide a justification for studying operanoids as a structured class of convex bodies.  For explicit examples of convex bodies that are not operanoids, consider a polytope in $\RR^d$ for $d \geq 3$ that is not a zonotope, such as the $\ell_1$ unit ball. By Theorem 3.5.2 in \cite{schneider_2013}, the $\ell_1$ unit ball is not a zonoid because its $2$-dimensional faces are not centrally symmetric. The same reasoning implies that the $\ell_1$ unit ball is not an $m$-operanoid for any $m$ by Proposition \ref{prop:operanoid_faces}.  Building on this observation, we note that the nuclear norm ball of $n \times n$ matrices for any $n \geq 3$ is not an $m$-operanoid for any $m$, as the $\ell_1$ ball in $\R^n$ may be expressed as a linear image of an $n \times n$ nuclear norm ball by projecting onto the diagonal entries (by contradiction, if the nuclear norm ball were an $m$-operanoid, then any linear image of it would also be an $m$-operanoid).  

Finally, the class of $m$-operanoids must lie in the complement of the set of simplicial polytopes, since the faces of $m$-operatopes are centrally symmetric and $m$-operanoids that are not $m$-operatopes must be decomposable (and hence are not simplicial polytopes which are indecomposable, see Corollary 3.2.13 in \cite{schneider_2013}). Thus, since simplicial polytopes form a dense subset of $\mathcal{K}^d$, the set of $m$-operanoids must be nowhere dense. We state this conclusion in the following proposition.
\begin{proposition}
    For $d \geq 3$, the class of $m$-operanoids in $\RR^d$ are nowhere dense in the class of centrally symmetric convex bodies in $\RR^d$. %If $m^2 \leq d-1$, $m$-operanoids are a nowhere dense in the set of centrally symmetric convex bodies in $\RR^d$.
\end{proposition}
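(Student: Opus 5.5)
The plan is to show that the set $\mathcal{O}_m$ of $m$-operanoids in $\RR^d$ is closed and has empty interior in the space $\mathcal{K}_s^d$ of centrally symmetric convex bodies with the Hausdorff metric. A closed set with empty interior is nowhere dense, since its closure equals itself.

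\emph{Closedness.} By definition, $m$-operanoids are Hausdorff limits of $m$-operatopes. A diagonal argument then shows that a Hausdorff limit of $m$-operanoids is again such a limit. Concretely, if $Z_k \to Z$ with $Z_k\in\mathcal{O}_m$, pick $m$-operatopes $P_k$ with $d_H(P_k,Z_k)<1/k$, so that $P_k\to Z$. Translations are harmless here, since an $m$-operatope may include the shift $v$. Hence $\mathcal{O}_m$ is closed.

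\emph{Empty interior.} Fix $Z\in\mathcal{O}_m$ and $\ee>0$. I will produce a centrally symmetric convex body within $\ee$ of $Z$ that is not an $m$-operanoid. The test for non-membership comes from Proposition~\ref{prop:operanoid_faces}: every exposed face of an $m$-operanoid is an $m$-operanoid, hence centrally symmetric. So it suffices to find a centrally symmetric polytope $P$ with $d_H(P,Z)<\ee$ that has a non-centrally-symmetric $2$-face. Equivalently, since a polytope all of whose $2$-faces are centrally symmetric is a zonotope by \cite[Theorem 3.5.1]{schneider_2013}, it suffices that $P$ is not a zonotope. The construction proceeds in two steps.
\begin{itemize}
\item[1.] Approximate $Z$ within $\ee/2$ by a centrally symmetric polytope $Q$, for instance the convex hull of a fine symmetric point set on $\partial Z$.
\item[2.] Perturb $Q$ symmetrically: choose a vertex $x$, push it outward slightly together with $-x$, and split it into a small simplicial cap. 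Generic symmetric perturbations of the vertices, taking convex hulls of $\pm$ a generic finite set, give a centrally symmetric polytope whose facets are all simplices once $d\ge 3$.
\end{itemize}
For $d\ge3$ the facets have dimension at least $2$, and a simplicial facet contains a triangular $2$-face. A triangle is not centrally symmetric, so $P$ is not an $m$-operanoid, for every $m$. This density of symmetric polytopes with triangular $2$-faces is exactly where $d\ge 3$ enters; for $d=2$ every $2$-face is the body itself.

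The main obstacle is making step 2 rigorous. I need a generic centrally symmetric point set whose convex hull is simplicial. The first approach to try: take a symmetric set $\pm y_1,\dots,\pm y_n$ with the $y_i$ in general position relative to the symmetry. Any facet contains at most one of $y_i,-y_i$, because the facet lies in a supporting hyperplane missing the origin. General position then forces exactly $d$ vertices per facet, so the facet is a $(d-1)$-simplex. I will verify that this genericity condition, which excludes finitely many algebraic coincidences, is achievable by arbitrarily small perturbations. That gives Hausdorff closeness, and the proof is complete.
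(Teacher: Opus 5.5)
Your proof is correct. It differs from the paper's in how it shows that the relevant polytopes are not $m$-operanoids.

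\textbf{The paper's route.} The paper argues that $m$-operanoids avoid all simplicial polytopes by splitting into two cases. First, $m$-operatopes have centrally symmetric faces. Second, $m$-operanoids that are not $m$-operatopes are asserted, without proof, to be decomposable, whereas simplicial polytopes are indecomposable (\cite[Corollary 3.2.13]{schneider_2013}). The paper then invokes density of simplicial polytopes in $\mathcal{K}^d$.

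\textbf{Your route.} You apply the operanoid half of Proposition~\ref{prop:operanoid_faces} directly to a triangular $2$-face, which is the same reasoning the paper uses for the $\ell_1$ ball. This avoids both the unproved decomposability claim and the indecomposability theorem. It also gives a stronger conclusion: any polytope that is an $m$-operanoid must be a zonotope, so every non-zonotope symmetric polytope lies in the complement. What the paper's route buys in exchange is that it needs only the finite-sum (operatope) half of the face proposition, which avoids the dominated-convergence step.

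\textbf{Steps you make explicit.} You spell out two things the paper leaves implicit.
\begin{itemize}
\item Closedness of the class of $m$-operanoids, which is what turns ``dense complement'' into ``nowhere dense.''
\item Density of \emph{centrally symmetric} simplicial polytopes within the symmetric bodies. The paper only cites density of simplicial polytopes in all of $\mathcal{K}^d$, which is not quite the statement needed when the ambient space is the symmetric class. Your antipodal-pair observation together with algebraic genericity handles this cleanly.
\end{itemize}

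\textbf{One detail to add.} Use at least $d$ symmetric pairs $\pm y_i$, adding extra points if $Z$ is lower-dimensional (for example, a segment), so that the perturbed hull is full-dimensional. Full-dimensionality is what puts the origin in the interior and makes every facet a $(d-1)$-simplex with $d-1\ge 2$.
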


\subsection{Approximation of Operanoids by Operatopes}\label{sec:approximation}

As our final result of this section, we consider the question of approximating $m$-operanoids by $m$-operatopes.  Specifically, how many summands $N$ are required in a Minkowski sum description of an $m$-operatope to approximate a given $m$-operanoid to a desired accuracy?  For the case $m=1$ of zonotope approximations of zonoids, this question was answered in \cite{bourgain_approximation_1989}, followed by improvements tightening the upper bound to a matching lower bound \cite{matouvsek1996improved, siegel_optimal_2023}.  Here we generalize some of this previous work to obtain a result for arbitrary $m$.

\begin{theorem}\label{t:approx_rate}
Let $Z$ be an $m$-operanoid centered at the origin and defined by a $d$-tuple of random matrices $(A_1, \ldots, A_d)$ in $\mathbb{H}^m$ with entries bounded by $L$. There exists a sequence of $m$-operatopes $\{Z_N\}_{N \geq 1}$ -- each $Z_N$ with $N$ summands -- such that for all $\ee > 0$ and an absolute constant $c > 0$, we have that
\begin{align*}
    N \geq cm^2L^2d^2 \ee^{-2} ~~~ \Rightarrow ~~~ d_H(Z, Z_N) \leq \ee.
\end{align*}
% \VC{I restated this result slightly -- please let me know if this seems fine.}\EO{Yes, sounds good.}
% implies that the approximation with respect to the Hausdorff metric satisfies
% \begin{align*}
%     d_H(Z, Z_N) \leq \ee.
% \end{align*}
\end{theorem}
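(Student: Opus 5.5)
The natural route is Monte Carlo sampling combined with a uniform law of large numbers over the sphere, in the spirit of the empirical-process arguments of \cite{bourgain_approximation_1989}. Let $(A_1^{(k)},\dots,A_d^{(k)})$, $k=1,\dots,N$, be i.i.d.\ copies of $(A_1,\dots,A_d)$. Let $Z_N$ be the $m$-operatope with support function
\[
h_{Z_N}(u)=\frac1N\sum_{k=1}^N \Big\|\sum_{i=1}^d u_i A_i^{(k)}\Big\|_1 .
\]
This is the construction already used in the proof of Theorem~\ref{t:zonoid_vitale_characterization}. The scaling $1/N$ is absorbed into the matrices, so $Z_N$ has $N$ summands. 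It suffices to show that $\EE[\sup_{u\in\mathbb{S}^{d-1}}|h_{Z_N}(u)-h_Z(u)|]\le C\,mLd/\sqrt N$. Then some realization achieves this bound, and we take $Z_N$ to be that realization. Choosing $N\ge c\,m^2L^2d^2\ee^{-2}$ gives $d_H(Z,Z_N)\le\ee$.

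First we record the size and regularity of the summands. Set $f_k(u)=\|\sum_i u_iA_i^{(k)}\|_1$. Each $A_i$ has entries bounded by $L$, so $\|A_i\|_1\le \sqrt m\|A_i\|_2\le mL$. For $u\in\mathbb{S}^{d-1}$, Cauchy--Schwarz gives $f_k(u)\le \sum_i|u_i|\,\|A_i\|_1\le \sqrt d\,mL$. The triangle inequality for the trace norm likewise gives the Lipschitz bound $|f_k(u)-f_k(w)|\le \|\sum_i(u_i-w_i)A_i^{(k)}\|_1\le \sqrt d\,mL\,\|u-w\|_2$. Hence the centered process $X(u)=h_{Z_N}(u)-h_Z(u)$ is a normalized sum of independent, mean-zero, bounded functions of $u$, each Lipschitz with constant $\sqrt d\,mL$.

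Next we control the supremum. By symmetrization, $\EE\sup_u|X(u)|\le 2\,\EE\sup_u|\frac1N\sum_k\sigma_k f_k(u)|$ with Rademacher signs $\sigma_k$. Conditional on the sample, this Rademacher process is subgaussian with respect to the metric $\rho(u,w)=N^{-1}(\sum_k (f_k(u)-f_k(w))^2)^{1/2}\le N^{-1/2}\sqrt d\,mL\|u-w\|_2$. The sphere has covering numbers $\mathcal N(\mathbb{S}^{d-1},\|\cdot\|_2,\delta)\le(3/\delta)^d$. Dudley's entropy integral then gives
\[
\EE\sup_u\Big|\frac1N\sum_k\sigma_kf_k(u)\Big|\lesssim \frac{\sqrt d\,mL}{\sqrt N}\int_0^{2}\sqrt{d\log(3/\delta)}\,\dint\delta\lesssim \frac{d\,mL}{\sqrt N}.
\]
A bounded-differences (McDiarmid) inequality, or simply Markov's inequality, upgrades this expectation bound to the existence of a good realization.

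The main obstacle is getting the dimension dependence exactly $d^2$ in $N$, i.e.\ $d$ in the deviation, without losing a logarithm. A naive $\delta$-net with a union bound and Hoeffding's inequality gives $\sqrt{d\log(1/\delta)}$ times the $\sqrt d\,mL$ scale. That costs an extra $\log(mLd/\ee)$ factor. Chaining via Dudley removes it, since the entropy integral over the sphere is $O(\sqrt d)$ with no logarithm, and this yields $\sqrt d\cdot\sqrt d\,mL=d\,mL$. If the chaining constants turn out delicate, the fallback is the net argument, which gives the statement up to a logarithmic factor. The factor $m$ comes solely from the crude trace-norm bound $\|A_i\|_1\le mL$, and I would not try to improve it, since the theorem only asks for $m^2$.
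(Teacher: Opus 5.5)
Your proposal follows the paper's proof essentially step for step. It uses the same i.i.d.\ sampled operatope with $h_{Z_N}(u)=\frac1N\sum_k\|\sum_i u_iA_i^{(k)}\|_1$, a Lipschitz bound making the centered process subgaussian at scale $\sqrt d\,mL/\sqrt N$, Dudley's entropy integral with $\mathcal{N}(\mathbb{S}^{d-1},\|\cdot\|_2,\delta)\le(3/\delta)^d$ to get $mLd/\sqrt N$ with no logarithm, and the probabilistic method. The differences are cosmetic. You symmetrize and bound the expected supremum of a Rademacher process. The paper instead applies Hoeffding directly to the increments, uses the tail form of Dudley's inequality, and adds a Hoeffding bound at a fixed $u_0$ with a union bound. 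You also need that fixed-point term, because Dudley controls $\sup_{u,w}|R(u)-R(w)|$ rather than $\sup_u|R(u)|$; it contributes only $O(\sqrt d\,mL/\sqrt N)$.

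The step that fails is the one that supplies the power of $m$. Entries bounded by $L$ give $\|A_i\|_2\le mL$, not $\sqrt m\,L$. So the best you get is $\|A_i\|_1\le\sqrt m\,\|A_i\|_2\le m^{3/2}L$, and this is attained. The symmetric Hadamard matrix $L\,H_2^{\otimes k}$ of size $m=2^k$ has entries $\pm L$, eigenvalues $\pm\sqrt m\,L$, and trace norm $m^{3/2}L$. Already for $m=2$, $\|LH_2\|_1=2\sqrt2\,L>2L$. With the correct constant the Lipschitz bound becomes $\sqrt d\,m^{3/2}L\,\|u-w\|_2$, and your argument yields only $N\ge c\,m^3L^2d^2\ee^{-2}$. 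So your closing remark is mistaken: $\|A_i\|_1\le mL$ is not a crude-but-valid bound, it is false under the stated hypothesis.

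The paper's proof rests on exactly the same inequality: its bound $|X_j|\le\|u-v\|_2\,mL\sqrt d$ uses $\|A_i^{(j)}\|_1\le mL$. This defect is therefore shared with the source rather than a divergence from it. Either argument does deliver the stated $m^2$ rate if the hypothesis is strengthened to $\|A_i\|_{op}\le L$ almost surely (or directly $\|A_i\|_1\le mL$), since then $\|A_i\|_1\le m\|A_i\|_{op}\le mL$. Under the literal entrywise hypothesis, neither argument establishes the $m^2$ dependence.
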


\begin{proof}
    We will prove this result using the probabilistic method.  In particular, we will construct random $m$-operatopes with $N$ summands and prove that they exhibit the desired approximation quality with high probability.  This in turn yields the existence of such operatope approximations. Throughout the proof, absolute constants appearing in upper bounds will be denoted $c_i$ for $i \in \mathbb{N}$.
    
    For $N \geq 1$ and $j =1, \ldots, N$, let $(A_1^{(j)}, \ldots, A_d^{(j)})$ be i.i.d. copies of the $d$-tuple of random matrices $(A_1, \ldots, A_d)$. Then for all $N \geq 1$, define the random $m$-operatope $Z_N$ with support function
    \begin{align*}
        h_{Z_N}(u) = \frac{1}{N} \sum_{j=1}^N \left\|\sum_{i=1}^d u_i A_i^{(j)} \right\|_1.
    \end{align*}
    %The goal the to show that with positive probability, for $N$ large enough,
    %\begin{align*}
    %    \PP(d_H(Z, Z_N) \leq \ee) \geq 
    %\end{align*}
    % \EO{For the $m$-operanoid $Z$ associated to the random matrices $(A_1, \ldots, A_d)$,}  \VC{Which $Z$ is this and from which theorem?}
    We observe that for $u,v \in \mathbb{S}^{d-1}$ and $t \geq 0$,
    \begin{align*}
        &\PP\left( |h_Z(u) - h_{Z_N}(u) - h_Z(v) + h_{Z_N}(v)| \geq t\right) \\
        &=  \PP\left( |h_Z(u) - h_Z(v) - (h_{Z_N}(u) - h_{Z_N}(v))| \geq t\right) \\
        &=  \PP\left( \left|\EE\left[\left\|\sum_{i=1}^d u_iA_i\right\|_1 - \left\|\sum_{i=1}^d v_i A_i \right\|_1 \right] - \frac{1}{N} \sum_{j=1}^N \left(\left\|\sum_{i=1}^d u_iA^{(j)}_i \right\|_1 - \left\|\sum_{i=1}^d v_i A^{(j)}_i \right\|_1\right)\right| \geq t\right).
    \end{align*}
    Now for each $j \in \{1, \ldots, N\}$, define $X_j := \left\|\sum_{i=1}^d u_iA^{(j)}_i \right\|_1 - \left\|\sum_{i=1}^d v_i A^{(j)}_i \right\|_1$. Note that the random variables $X_j$ satisfy the bounds
    \begin{align*}
       -\|u-v\|_2 mL\sqrt{d} \leq  -\|u-v\|_2\left(\sum_{i=1}^d \left\|A^{(j)}_i\right\|^2_1\right)^{\frac{1}{2}} \leq X_j  \leq  \|u-v\|_2\left(\sum_{i=1}^d \left\|A^{(j)}_i\right\|^2_1\right)^{\frac{1}{2}} \leq \|u-v\|_2 mL\sqrt{d}.
    \end{align*}
Hoeffding's inequality \cite[Theorem 2.2.6]{VershyninBook} then implies
\begin{align*}
    \PP\left(\frac{1}{N}\sum_{j=1}^N \left(X_j - \mathbb{E}[X_j] \right) \geq t\right) \leq \exp\left(- \frac{t^2N}{2m^2L^2d\|u-v\|_2^2}\right).
\end{align*}
Thus, the collection $\{h_Z(u) - h_{Z_N}(u)\}_{u \in \mathbb{S}^{d-1}}$ has sub-Gaussian increments with constant $\frac{mL\sqrt{d}}{\sqrt{N}}$. 

% \VC{The following sentence feels a bit confusing -- is it possible to rephrase it?  Also, it would be good to clarify that the $u_0$ is fixed but arbitrary.} \EO{Oof yea I agree, I've edited it - is it ok now?}

Next, let $u_0 \in \mathbb{S}^{d-1}$ be an arbitrary fixed unit vector. By the tail bound version of Dudley's integral inequality \cite[Theorem 8.1.6]{VershyninBook}, we have for all $s \geq 0$ that:
\begin{align*}
    \sup_{u \in \mathbb{S}^{d-1}} \left(|h_Z(u) - h_{Z_N}(u)| - |h_Z(u_0) - h_{Z_N}(u_0)|\right) 
    &\leq \frac{c_1 mL\sqrt{d}}{\sqrt{N}}\left[ \int_0^{2} \sqrt{\log \mathcal{N}(\mathbb{S}^{d-1}, \|\cdot\|_2, \delta)} \, \dint \delta + 2s \right]
\end{align*}
with probability at least $1 - 2e^{-s^2}$. Here, $\mathcal{N}(\mathbb{S}^{d-1}, \|\cdot\|_2, \delta)$ is the $\delta$-covering number of $\mathbb{S}^{d-1}$ with respect to the $\ell_2$ distance.
Recall from Corollary 4.2.13 and (4.10) in \cite{VershyninBook} that
\begin{align*}
  \mathcal{N}(\mathbb{S}^{d-1}, \|\cdot\|_2, \delta) \leq \left(\frac{3}{\delta}\right)^d,  
\end{align*}
which gives the upper bound
\begin{align*}
 \int_0^{2} \sqrt{\log \mathcal{N}(\mathbb{S}^{d-1}, \|\cdot\|_2, \delta)} \, \dint \delta \leq \int_0^{2} \sqrt{d \log \left(\frac{3}{\delta}\right)} \, \dint \delta \leq c_2\sqrt{d}  
\end{align*}
and thus for all $s \geq 0$,
\begin{align}\label{e:dudley_bnd}
    \sup_{u \in \mathbb{S}^{d-1}} \left(|h_Z(u) - h_{Z_N}(u)| - |h_Z(u_0) - h_{Z_N}(u_0)|\right) 
    &\leq \frac{c_3mL\sqrt{d}}{\sqrt{N}}\left(\sqrt{d} + s \right).
\end{align}
By another application of Hoeffding's inequality, we have for $t \geq 0$,
\begin{align*}
    \PP\left(|h_Z(u_0) - h_{Z_N}(u_0)| \geq t\right) \leq \exp\left(- \frac{2t^2N}{m^2L^2d}\right),
\end{align*}
and letting $t = \frac{mLd}{\sqrt{N}}$ gives
\begin{align*}
    \PP\left(|h_Z(u_0) - h_{Z_N}(u_0)| \geq \frac{mLd}{\sqrt{N}}\right) \leq \exp\left(- 2d\right).
\end{align*}
Finally, letting $s = \sqrt{d}$ in \eqref{e:dudley_bnd} and applying a union bound, we have
\begin{align*}
    d_H(Z, Z_N) &= \sup_{u \in \mathbb{S}^{d-1}} |h_Z(u) - h_{Z_N}(u)| \\
    &\leq |h_Z(u_0) - h_{Z_N}(u_0)| + \sup_{u \in \mathbb{S}^{d-1}} \left(|h_Z(u) - h_{Z_N}(u)| - |h_Z(u_0) - h_{Z_N}(u_0)|\right) \\
    &\leq \frac{c_4 mLd}{\sqrt{N}},
\end{align*}
with probability at least $1 - 2e^{-d} - e^{-2d}$.
Thus, there exists a sequence of $m$-operatopes $\{Z_N\}_{N \geq 1}$ such that for all $\ee > 0$, if
$N \geq c_5 m^2L^2d^2 \ee^{-2}$,
then $d_H(Z, Z_N) \leq \ee$.
\end{proof}

% \VC{Should we remark here about the connection to neural network approximations?  We don't do anything else with this result, correct?}

% \EO{Right, yea we could mention it here or in the discussion with an open question, and maybe describe the neural network architecture that corresponds to a $m$-operatope?}

% \VC{Mention linear optimization over operanoids.}

This result suggests some computational consequences, particularly in the context of optimization.  Suppose we wish to maximize a linear functional over an operanoid $Z$, i.e., evaluate the support function of $Z$ in a given direction $u$.  Based on Theorem \ref{t:approx_rate}, if we are able to sample matrices from the measure underlying the description of $Z$, then we can obtain an operatope approximation $\tilde{Z}$ so that $|h_{\tilde{Z}}(u) - h_Z(u)| \leq \varepsilon$ for a desired $\varepsilon$, with the upshot being that $h_{\tilde{Z}}(u)$ can be obtained via a spectral computation.

\section{Noncommutative Zonoids}\label{sec:NCzonoids-overview}
For a fixed $m$, the limit of $m$-operatopes as the number of summands $N$ grows can alternatively be viewed as taking the limit of $m N$-operatopes consisting of a single summand as $N$ grows, where the parameterizing matrix is restricted to be $m\times m$ block diagonal.  Relaxing this constraint leads to an asymptotic regime concerning the behavior of $m$-operanoids as the size $m$ of the matrices grows. This is a new setting that is not defined for zonoids, and a different perspective is required to investigate the associated limiting objects.  Under certain conditions on the matrices involved, the theory of noncommutative probability gives us the tools to make sense of such limits.  In particular, the general framework of noncommutative probability allows us to define a novel class of convex bodies called noncommutative zonoids characterized by noncommutative random variables.  We show that these objects contain $m$-operanoids as special cases, and moreover, they characterize certain limits of $m$-operanoids as $m$ grows.  Section \ref{sec:NCrandomvariables} provides some background on noncommutative random variables.  Section \ref{sec:NCzonoids} then formally presents the class of noncommutative zonoids.  This includes Section \ref{sec:freezonoids}  focusing on a subclass called free zonoids, which are convex bodies derived from free random variables.

\subsection{Noncommutative Random Variables}\label{sec:NCrandomvariables}

We briefly recall here the definition of a noncommutative probability space and additional relevant concepts.  For a more complete exposition, we refer the reader to \cite{Voiculescu_lectures}.

A $\star$-algebra $\mathcal{A}$ is an algebra over the field of complex numbers $\mathbb{C}$ equipped with an involution $\star$ such that $(ab)^{\star} = b^{\star}a^{\star}$ for all $a,b \in \mathcal{A}$. A $C^{\star}$-algebra is a $\star$-algebra equipped with a norm $\| \cdot\|: \mathcal{A} \to \RR_{+}$ such that
$(\mathcal{A}, \| \cdot\|)$ is a complete normed vector space,
$\|a^{\star}\| = \|a\|$, and $\|ab \| \leq \|a\|\|b\|$ for all $a,b \in \mathcal{A}$.
We then have the following definition of a noncommutative probability space.

\begin{definition}
   A \emph{noncommutative (NC) probability space} $({\mathcal A},\tau)$ consists of a $\star$-algebra $\mathcal A$ containing an identity element $\mathbf{1}$, together with a linear functional ${\tau: {\mathcal A} \rightarrow {\mathbb C}}$ such that $\tau(\mathbf{1}) = \mathbf{1}$ and $\tau(aa^{\star}) \geq 0$ for all $a \in \mathcal A$, i.e. $\tau$ maps positive elements of $\mathcal{A}$ to positive real numbers. An NC probability space $(\mathcal{A}, \tau)$ is a \emph{$C^{\star}$-probability space} if $\mathcal{A}$ is a $C^{\star}$-algebra. %\VC{Let's discuss this definition.}
\end{definition}

Another common choice of algebra to define an NC probability space is a von Neumann algebra. However, in the following, we will continue to focus on $C^\star$-probability spaces, as these allow for a continuous functional calculus that is needed for the objects studied in this section to be well-defined. For a $C^\star$-probability space $(\mathcal{A}, \tau)$, we will refer to an element $a \in \mathcal{A}$ as a NC random variable. %If we assume $a$ is normal, i.e. $a a^* = a^* a$, and let $\sigma(a)$ denote the spectrum of $a$, then the \emph{continuous functional calculus} allows us to make sense of the expression $f(a)$ as a bounded operator for a continuous function $f: \sigma(a) \to \RR$. \VC{This last sentence suggests a link to Hilbert spaces that might not be obvious at this stage -- let's discuss.}

\subsubsection{The distribution of a NC random variable}

We next describe how to understand the distribution of a NC random variable. Let $(\mathcal{A}, \tau)$ be a $C^{\star}$-probability space and $a \in \mathcal{A}$. Define $\mu_a: \mathbb{C}[x] \to \mathbb{C}$ as the linear functional on the polynomials (in the indeterminate $x$) given by
\[\mu_a(P) := \tau(P(a)).\]
We will refer to $\mu_a$ as the \emph{distribution} of $a$. More generally, the \emph{joint distribution} $\mu_{(a_1, \ldots, a_d)}$ of a collection of NC random variables $(a_1, \ldots, a_d)$ is the complex-valued linear map defined on the set of polynomials (in the indeterminates $(x_1, \ldots, x_d)$) given by
\[\mu_{(a_1, \ldots, a_d)}(P) = \tau(P(a_1, \ldots, a_d)).\]
The distribution $\mu_{(a_1, \ldots, a_d)}$ is thus completely determined by the noncommutative moments $$\tau(a_{i_1}\cdots a_{i_k}), \text{ for all } k \in \mathbb{N} \text{ and } i_1, \ldots, i_k \in \{1, \ldots d\}.$$

The \emph{Gelfand-Naimark-Segal Theorem} says that any $C^{\star}$-probability space $(\mathcal{A}, \tau)$ can be identified with a norm-closed ${\star}$-subalgebra $\mathcal{A} \subseteq B(\mathcal{H})$ of the collection of bounded operators on a Hilbert space $\mathcal{H}$, where for $a \in \mathcal{A}$, $\tau(a) = \langle a \xi, \xi\rangle$ and $\xi \in \mathcal{H}$ is a unit vector with $\|\xi\| = 1$.  
For a self-adjoint element $a = a^* \in (\mathcal{A}, \tau)$, this identification allows the distribution $\mu_a$ to be described by a probability measure on $\RR$. In particular, letting $\sigma(a)$ denote the spectrum of $a$ (realized as a self-adjoint bounded operator) and using the continuous functional calculus, the spectral theorem gives the following. There exists a compactly supported probability measure $\nu$ called the \emph{spectral distribution of $a$} on $\RR$ such that %we can write 
%$$\mu_a(P) = \int_{\RR}P(t) \dint \nu(t).$$ 
%Furthermore, letting $\sigma(a) \subset \RR$ denote the spectrum of $a$ (realized as a self-adjoint bounded operator), the continuous functional calculus extends this to continuous functions. That is, 
for any continuous function $f: \sigma(a) \to \RR$, 
\begin{align}\label{eq:spectalmeasure}
    \tau(f(a)) = \int_{\RR} f(t) \dint \nu(t).
\end{align}
We remark here again that the assumption of a $C^{\star}$-algebra for the probability space is motivated by access to the continuous functional calculus, which ensures $\eqref{eq:spectalmeasure}$ is well-defined for all continuous functions $f$, instead of just polynomials. Specifically, we would like to study the absolute value of a NC random variable in the sequel.
%Indeed, polynomials are dense in the space of continuous functions, and thus $$\int_{\RR} f(t) \dint \nu(t) = \lim_{n \to \infty} \int_{\RR} P_n(t) \dint \nu(t) = \lim_{n \to \infty} \tau(P_n(a)).$$

Another way to characterize the distribution of an NC random variable is through functional transforms. The most relevant of these in our setting is the \emph{$R$-transform}. To define it, we first recall that the \emph{Steiltjes} or \emph{Cauchy transform} $G$ of an NC random variable $a$ is given by
\begin{align*}
 G(z) := \frac{1}{z} +  \sum_{n=1}^{\infty} \frac{\tau(a^n)}{z^{n+1}},   
\end{align*}
and if $a$ is self-adjoint with spectral distribution $\nu$,
\begin{align*}
    G(z) = \tau\left(\frac{1}{z - a}\right) = \int_{\RR} \, \frac{1}{z - t} \, \dint \nu(t), \qquad z \in \mathbb{C}/\RR.
\end{align*}
\begin{definition} The \emph{$R$-transform} of an NC random variable $a$ is defined as 
\[R_a(z) := K_a(z) - z^{-1},\]
where $K_a$ is the inverse of the Cauchy transform $G$ of $a$ that satisfies $G(K_a(z)) = z$.
\end{definition}

\subsubsection{Free random variables and free convolution}

An important notion in the theory of NC probability is that of ``freeness", which in many ways is analogous to independence of standard commutative random variables. This property is described in the following definition.

\begin{definition}
Let $(\mathcal{A}, \tau)$ be an NC probability space and let $(\mathcal{A}_i)_{i=1}^m$ be subalgebras of $\mathcal{A}$ with an identity element that is the same as the identity element of $\mathcal{A}$, i.e. unital subalgebras. The $\mathcal{A}_i$ are \emph{freely independent} if for any $a_1, \ldots, a_d \in \mathcal{A}$ that satisfy
\begin{itemize}
    \item[(i)]  $\tau(a_i) = 0$ for $i \in [d]$,
    \item[(ii)] $a_i \in \mathcal{A}_{j_i}$ for $j_i \in [m]$, and
    \item[(iii)] $j_i \neq j_{i + 1}$ for $i=1, \ldots, d-1$,
\end{itemize}
it holds that $\tau(a_1 \cdots a_d) = 0$. A family of random variables $a_1, \ldots, a_d \in \mathcal{A}$ is \emph{free} if the unital subalgebras they generate are freely independent.
\end{definition}

%\VC{Should we define convergence in distribution in this section or later?} \EO{We should define it here I think. I've moved up the definition, but needs a better transition}

%We first recall that 
%the \emph{joint distribution} of a collection $a_1, \ldots, a_k \in \mathcal{A}$ is determined by the collection
%\[\tau(a_{i_1}\cdots a_{i_j}), \text{ for all } 1 \leq i_1, \ldots, i_j \leq k, j \geq 0.\]

%The analysis of free zonoids relies on understanding the distribution of linear combinations of freely independent random variables.
The distribution of the sum of two classically independent commutative random variables is the convolution of the two distributions. %, and the characteristic function satisfies
%\[\Phi_{X + Y} = \tau(e^{it(X + Y)}) = \tau(e^{itX})\tau(e^{itY}).\]
For free random variables, there is an analogous notion defined as follows. %where the distribution of the sum $a + b$ of two freely independent random variables $a$ and $b$ with distributions $\mu$ and $\nu$, respectively, is given by \emph{free convolution} of $\mu$ and $\nu$, denoted by $\mu \boxplus \nu$. % defined as follows. 
\begin{definition} 
Let $(\mathcal{A}, \tau)$ be a $C^\star$-probability space with two free self-adjoint variables $a$, $b \in \mathcal{A}$ with distributions $\mu$ and $\nu$, respectively. The \emph{free convolution} of $\mu$ and $\nu$, denoted by $\mu \boxplus \nu$, is defined to be the distribution of $a + b$ associated to $\tau$.
\end{definition}
The free convolution interacts nicely with the $R$-transform, which is additive with respect to the addition of free random variables. That is, if $a$ and $b$ are free random variables, then $R_{a + b} = R_a + R_b$.

\subsubsection{Convergence in distribution}

Next we discuss a few notions of convergence for NC random variables.

\begin{definition}\label{def:conv_in_dist}
A collection $(a_{1}^{(m)}, \ldots, a_{k}^{(m)})$ of random variables in $(\mathcal{A}_m, \tau_m)$ converges \emph{in the sense of moments} to the collection $(a_1, \ldots, a_k)$ in ($\mathcal{A}, \tau$) if for all $1 \leq i_1, \ldots, i_j \leq k$,
\begin{align*}
    \lim_{m \to \infty} \tau_m(a_{i_1}^{(m)}\cdots a_{i_j}^{(m)}) =  \tau(a_{i_1}\cdots a_{i_j}).
\end{align*}
\end{definition}

For a sequence $a^{(m)}$ of single self-adjoint, uniformly bounded NC random variables, convergence in the sense of moments is equivalent to the usual notion of convergence in distribution with respect to the spectral distributions on $\RR$. In other words, a sequence $a^{(m)} \in \mathcal{A}_m$ with uniformly bounded spectral radii\footnote{For a self-adjoint NC random variable $a \in (\mathcal{A}, \tau)$, its \emph{spectral radius} is defined by $\rho(a) := \lim_{p \to \infty} |\tau(a^{2p})|^{1/(2p)}$.} converges to a bounded, self-adjoint random variable $a \in \mathcal{A}$ in the sense of moments if and only if the sequence of spectral distributions $\nu_{m}$ of $a^{(m)}$ converges in the vague topology to the spectral distribution $\nu$ of $a$ (see, for instance, Exercise 2.5.11 in \cite{tao2023topics}).

We now highlight some existing results on the limiting distribution of random matrix ensembles discussed in Section \ref{sec:rm-examples}. All of the examples have free random variables in the limit. 

\begin{example}\label{ex:wigner_conv}
Let $A^{(m)}_1, \ldots, A^{(m)}_d$ be $m \times m$ Wigner random matrices as defined in Section \ref{sec:wigner}. %That is, they are Hermetian random matrices with diagonal and upper diagonal entries that are i.i.d. (this can be generalized to just independent with the same variance).
Then, as $m \to \infty$, $(A^{(m)}_1, \ldots, A^{(m)}_d)$ converges \emph{in distribution} to a $d$-tuple of free self-adjoint random variables $(a_1, \ldots, a_d)$ each with a semicircle distribution (Theorem 2.5 in \cite{BaiSilverstein_RandomMatrices}). %A self-adjoint NC random variable $a \in (\mathcal{A}, \tau)$ with odd moments $\tau(a^{2n+1}) = 0$ and even moments $\tau(a^{2n}) = \sigma^{2n}(2\pi)^{-1}\int_{-2\sigma}^{2\sigma} t^{2n}\sqrt{4\sigma^2 - t^2} \dint t$ has a \emph{semicircle distribution} with variance $\sigma^2$. 
The semicircle spectral distribution $\nu$ with variance $\sigma^2$ has density given by $d\nu(t) = \frac{\sigma^2}{2\pi}\sqrt{(4\sigma^2 - t^2)_+}$, which has support on the interval $[-2\sigma, 2\sigma]$. The $R$-transform of a semicircle random variable with variance $\sigma^2$ is given by $R(s) = \sigma^2 s$.  %If the entries are unbounded random variables have unbounded entries, the result may still hold (it does in the Gaussian case) using a standard truncation argument (see Section 2.1.3 in \cite{BaiSilverstein_RandomMatrices}) \EO{Check this}
\end{example}

\begin{example}\label{ex:permutation_conv}
Let $(V_1^{(m)}, \ldots, V_d^{(m)})$ be a $d$-tuple of uniformly distributed random permutation matrices as defined in Section \ref{sec:permutation}. By a result of Nica \cite{Nica1993}, $(V_1^{(m)}, \ldots, V_d^{(m)})$ converges in distribution to a $d$-tuple of free random variables $(u_1, \ldots, u_d)$ as $m \to \infty$, where the $u_i$'s are Haar unitaries. An NC random variable $u \in (\mathcal{A}, \tau)$ is a \emph{Haar unitary} if and only if $uu^* = \mathbf{1}$ and $\tau(u^k) = 0$ for all $k \in \mathbb{Z} \backslash \{0\}$.
\end{example}

\begin{example}\label{ex:wishart_conv}
Let $W_1^{(m)}, \ldots W_d^{(m)}$ be a collection of independent Wishart random matrices as defined in Section \ref{sec:wishart}. It was shown by Marchenko and Pastur \cite{marvcenko1967distribution} that for fixed $\kappa  \in (0, \infty)$, as $m \to \infty$ a single Wishart random matrix converges in distribution to the so-called \emph{Marchenko-Pastur law}. The spectral distribution of a Marchenko-Pastur random variable satisfies, for $A \subset \RR$,
\begin{align*}
\nu(A) = \begin{cases} (1 - 1/\kappa)\mathbf{1}_{\{0 \in A\}} + \mu(A), & \text{ if } \kappa > 1, \\
\mu(A), & \text{ if } 0 \leq \kappa \leq 1, \end{cases}
\end{align*}
where 
\begin{align*}
    \dint \mu(x) = \frac{\sqrt{\left(\sigma^2(1 + \sqrt{\kappa})^2 - x\right)\left(x - \sigma^2(1-\sqrt{\kappa})^2\right)}}{2\pi\sigma^2}\mathbf{1}\{x \in [\sigma^2(1-\sqrt{\kappa})^2, \sigma^2(1+\sqrt{\kappa})^2]\}.
\end{align*}
For the case of general $d$, it was proved in \cite{capitaine2004asymptotic} that
$(W_1^{(m)}, \ldots W_d^{(m)})$ converges in the sense of moments to a $d$-tuple of free random variables with the Marchenko-Pastur distribution.
\end{example}

\subsection{Noncommutative Zonoids}\label{sec:NCzonoids}

With the preceding concepts in hand, we define next the main objects of interest of this section.  To begin with, recall that random matrices are particular examples of NC random variables.  By replacing the random matrices that are used in the definition of $m$-operanoids by elements of a general $C^\star$-probability space, we extend the definition of $m$-operanoids.

\begin{definition}
    Fix a $C^\star$-probability space $(\mathcal{A}, \tau)$.  A convex body $Z$ in $\RR^d$ is a \emph{noncommutative (NC) zonoid} if it has support function of the form:
    \begin{align*}
      h_{Z}(u) := \langle v, u \rangle + \tau\left(\left|\sum_{i=1}^{d} u_i a_i\right|\right), \quad u \in \mathbb{S}^{d-1}, 
    \end{align*}
    for some $v \in \RR^d$ and some $d$-tuple $a_1, \ldots, a_d$ of self-adjoint random variables in $(\mathcal{A}, \tau)$.
If $Z$ is centered at the origin, then $v = 0$.  
Here the absolute value $|a|$ for $a \in \mathcal{A}$ is defined as $|a| := (a^\star a)^{1/2}$.

% \VC{Is something needed about origin-symmetry?  Should we perhaps have a remark after the definition?} \EO{Or we could edit the definition to match that of Definition 31 for lift NC zonoids as we did with operanoids so we cover the general center setting?} \VC{Let's discuss this!}
%If $(A_1, \ldots, A_d)$ are \emph{free}, then $Z_A$ is called a \emph{free zonoid}. 
\end{definition}

% \EO{In the definition of a NC zonoid, we assume the random variables are self-adjoint to have access to their spectral distribution of the spectral distribution of their linear combination, as well as the usual notion of convergence in distribution described in the previous section.} \VC{This is fine now given that earlier we use Hermitian matrices, right?}

In the following examples, we illustrate that operanoids can be recovered as examples of NC zonoids for a particular choice of the underlying $C^\star$-probability space.

\begin{example}\label{ex:rand_matrix_Calg}
Let $(\mathcal{A}, \tau)$ be the space of random Hermitian matrices with bounded entries $\mathbb{H}^m \otimes L^{\infty}(\Omega, \mathcal{F}, \PP)$ equipped with the functional $\tau(A) = \EE[ \mathrm{tr}_m(A)]$. This is a $C^\star$-probability space with involution given by the transpose of the random matrix $(A(\cdot))^{\star} = A^\star(\cdot)$. For a self-adjoint $m \times m$ random matrix $A$ with bounded entries, the spectral distribution $\nu$ of $A$ as characterized in \eqref{eq:spectalmeasure} is given by $\nu = \EE\left[m^{-1}\sum_{j=1}^m \delta_{\lambda_j(A)}\right]$, where $\lambda_1(A) \leq \cdots \leq \lambda_m(A)$ are the eigenvalues of $A$ (see Example 1.6.2 in \cite{Voiculescu_lectures}).
NC zonoids with respect to this $C^\star$-probability space are $m$-operanoids.
\end{example}

As we articulated previously, a primary motivation for defining NC zonoids is that the framework of NC probability allows us to make sense of the notion of convergence of $m$-operanoids as $m$ grows.  In particular, this question is closely related to that of the limiting distribution of a $d$-tuple of $m \times m$ random matrices as $m$ grows.  As the next result shows, under certain conditions the limiting object will be an NC zonoid.

\begin{proposition}\label{prop:distr_limit}
Consider a sequence of $C^\star$-probability spaces $(\mathcal{A}_m, \tau_m)$ for $m \in \mathbb{N}$.  For each $m$, let $a^{(m)}_1, \ldots, a^{(m)}_d$ be self-adjoint NC random variables in $(\mathcal{A}_m, \tau_m)$, and assume the collection $\{a_i^{(m)}\}_{i\in [d], m \in \mathbb{N}}$ is uniformly bounded.  Suppose that $(a^{(m)}_1, \ldots, a^{(m)}_d)$ converges in the sense of moments to a $d$-tuple of NC random variables $(a_1, \ldots, a_d)$, each lying in a $C^\star$-probability space $(\mathcal{A}, \tau)$, as $m \to \infty$. Let $Z_m$ be the NC zonoid associated to $(a^{(m)}_1, \ldots, a^{(m)}_d)$ and $Z$ the NC zonoid associated to $(a_1, \ldots, a_d)$. Then,
\[ \lim_{m \to \infty} d_H(Z_m, Z) = 0.\]
\end{proposition}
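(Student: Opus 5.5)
The approach is to prove pointwise convergence of support functions, $h_{Z_m}(u) \to h_Z(u)$ for each fixed $u$, and then upgrade to uniform convergence on $\mathbb{S}^{d-1}$ using a uniform Lipschitz bound.

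\emph{Step 1: pointwise convergence.} Fix $u \in \mathbb{S}^{d-1}$ and set $b^{(m)} := \sum_{i=1}^d u_i a_i^{(m)}$ and $b := \sum_{i=1}^d u_i a_i$. Both are self-adjoint. Let $C$ be the uniform bound $\|a_i^{(m)}\| \le C$. Then $\|b^{(m)}\| \le \sqrt{d}\,C$ for every $m$.

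Each moment $\tau_m((b^{(m)})^k)$ expands by multilinearity into a finite linear combination of mixed moments $\tau_m(a^{(m)}_{i_1}\cdots a^{(m)}_{i_k})$. By Definition~\ref{def:conv_in_dist}, these converge to the corresponding mixed moments of $(a_1,\dots,a_d)$. Hence $b^{(m)} \to b$ in the sense of moments.

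The limit $b$ is also bounded. Indeed, $|\tau(b^{2p})| = \lim_m |\tau_m((b^{(m)})^{2p})| \le (\sqrt d C)^{2p}$, and this persists in the limit. So the spectral distribution $\nu$ of $b$ is supported in $[-\sqrt d C, \sqrt d C]$, as are the spectral distributions $\nu_m$ of $b^{(m)}$.

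By \eqref{eq:spectalmeasure}, $h_{Z_m}(u) = \int |t|\,\dint\nu_m(t)$ and $h_Z(u) = \int |t|\,\dint\nu(t)$. Since all measures live on the fixed compact interval $I = [-\sqrt d C, \sqrt d C]$, I can argue directly by Weierstrass approximation. Given $\ee > 0$, choose a polynomial $P$ with $\sup_{I} \bigl||t| - P(t)\bigr| < \ee$. Then
\[
|h_{Z_m}(u) - h_Z(u)| \le 2\ee + |\tau_m(P(b^{(m)})) - \tau(P(b))|,
\]
and the last term tends to $0$ by moment convergence. This avoids needing the vague-convergence equivalence as a black box.

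\emph{Step 2: uniformity.} For $u, w \in \mathbb{S}^{d-1}$, the operator-norm triangle inequality and positivity of $\tau_m$ give
\[
|h_{Z_m}(u) - h_{Z_m}(w)| \le \tau_m\Bigl(\Bigl|\sum_i (u_i - w_i)\, a_i^{(m)}\Bigr|\Bigr) \le \sqrt d\, C\, \|u - w\|_2 .
\]
The same bound holds for $h_Z$. One way to justify the first inequality is to note that $|\tau(|x|) - \tau(|y|)| \le \tau(|x-y|)$ for self-adjoint $x, y$. Alternatively, and more simply, $\tau(|x|) \le \|x\|$ because $\tau$ is a state, and sublinearity of $h_{Z_m}$ gives $|h(u) - h(w)| \le \max\{h(u-w),\, h(w-u)\}$.

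With this equi-Lipschitz family, pointwise convergence on a finite $\delta$-net of $\mathbb{S}^{d-1}$ upgrades to uniform convergence. That is precisely $d_H(Z_m, Z) \to 0$.

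\emph{Main obstacle.} The expected main difficulty is the passage from polynomial moments to the non-polynomial function $|t|$. This needs the spectra to lie in a common compact set, including the limit's spectrum, and the spectral-radius estimate in Step 1 is where that is secured. A secondary point is checking that $\tau(|x|)$ is sublinear in $x$, so that $h_{Z_m}$ and $h_Z$ are genuinely support functions. I take this as implicit in the definition of NC zonoids; it follows from the trace-norm-type triangle inequality for tracial states.
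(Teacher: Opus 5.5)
Your proof is correct and follows the paper's route. You first get pointwise convergence of $h_{Z_m}(u)$ from moment convergence of $\sum_i u_i a_i^{(m)}$, and then upgrade this to Hausdorff convergence; the only difference is that you prove inline (Weierstrass approximation on a common interval, and the equi-Lipschitz net argument) the two facts the paper cites, namely the equivalence of moment and vague convergence and Schneider's Theorem~1.8.15, and your second Lipschitz justification correctly avoids needing $\tau$ to be tracial.
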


\begin{proof}
The convergence in the sense of moments of $(a^{(m)}_1, \ldots, a^{(m)}_d)$ to $(a_1, \ldots, a_d)$ as $m \to \infty$ implies that for all $u \in \mathbb{S}^{d-1}$, $\sum_{i=1}^d u_i a_i^{(m)}$ converges in the sense of moments to $\sum_{i=1}^d u_i a_i$ as $m \to \infty$. Under the assumptions, this is equivalent to convergence in distribution with respect to the spectral distributions (see discussion after Definition~\ref{def:conv_in_dist}).  Thus, 
we have for each $u \in \mathbb{S}^{d-1}$ that:
\begin{align*}
\lim_{m \to \infty} h_{Z_m}(u) = \lim_{m \to \infty} \tau_m\left(\left|\sum_{i=1}^d u_i a^{(m)}_i\right|\right) 
&= \tau\left(\left|\sum_{i=1}^d u_i a_i\right|\right) = h_Z(u). 
\end{align*}
The result then follows from Theorem 1.8.15 in \cite{schneider_2013}.
\end{proof}

The above proposition says that under suitable assumptions, convergence in distribution of a tuple of NC random variables implies convergence of the associated NC zonoids in the Hausdorff metric. While Proposition \ref{prop:distr_limit} provides a sufficient condition for a sequence of $m$-operanoids to converge to a NC zonoid, we should not expect the converse of the result to be true in general.  That is, even if a sequence of NC zonoids converges in Hausdorff distance, the associated sequence of tuples of NC random variables may not exhibit convergence in distribution.  This is because an NC zonoid does not uniquely characterize the distribution of the underlying tuple of NC random variables; in fact, even a zonoid is not in one-to-one correspondence with the distribution of the underlying tuple of scalar random variables.

We next illustrate examples of NC zonoids that can be obtained as limits of $m$-operanoids using Proposition \ref{prop:distr_limit}.
As a first example, we show that NC zonoids obtained from $d$-tuples of free semicircle random variables are ellipsoids with primary axes along the standard bases. Recall that the $R$-transform is additive with respect to the addition of free random variables. Also, observe that for $t \in \RR$, $R_{ta}(s) = tR_a(ts)$. Thus, for free NC random variables $(a_1, \ldots, a_d)$ and a unit vector $u \in \mathbb{S}^{d-1}$, the linear combination $\sum_{i=1}^d u_i a_i$ has $R$-transform
\begin{align}\label{eq:R_lincombo}
    R_{\sum_{i=1}^d u_ia_i}(s) := \sum_{i=1}^d u_i R_{a_i}(u_i s).
\end{align}

\begin{example}\label{ex:wigner_ellipsoid}
Recall from Example \ref{ex:wigner_conv} that a $d$-tuple of independent Wigner random matrices converges in distribution to a $d$-tuple of free semicircle random variables. To see what the associated limiting NC zonoid is, let $u \in \mathbb{S}^{d-1}$ and $(a_1, \ldots, a_d)$ be a $d$-tuple of free random variables such that $a_i$ has a semicircle distribution with variance $\sigma_i$ as defined in Example \ref{ex:wigner_conv}.  We first observe that $\sum_{i=1}^d u_ia_i$ has a semicircle distribution with variance $\sqrt{\sum_{i=1}^d u_i^2\sigma_i^2}$. Indeed, for a semicircle random variable $a$ and for $u \in \RR$, $ua$ has a semicircle distribution with variance $u^2$.  Using \eqref{eq:R_lincombo}, the $R$-transform of $\sum_{i=1}^d u_i a_i$ is then given by
    \begin{align*}
        R_{\sum_{i=1}^d u_ia_i}(s) =  \sum_{i=1}^d u_i^2\sigma_i^2 s.
    \end{align*}
Letting $\sigma(u) := \left(\sum_{i=1}^d u_i^2\sigma_i^2\right)^{1/2}$, we obtain that
\begin{align*}
    \tau\left(\left|\sum_{i=1}^d u_ia_i\right|\right) &= (2\pi \sigma(u)^2)^{-1} \int_{-2 \sigma(u)}^{2\sigma(u)} |t| \sqrt{4\sigma(u)^2 - t^2} \dint t 
    %&=  (\sigma(u)^2\pi)^{-1} \int_{0}^{2\sigma(u)} t \sqrt{4\sigma(u)^2 - t^2} \dint t \\ 
    %&=  2(\sigma(u)\pi)^{-1} \int_{0}^{2\sigma(u)} t \sqrt{1 - (t/2\sigma(u))^2} \dint t \\ 
    =  8\sigma(u)\pi^{-1} \int_{0}^{1} r \sqrt{1 - r^2} \dint r = c\sigma(u),
\end{align*}
where $c > 0$ is a constant. Thus, by Proposition \ref{prop:distr_limit}, $m$-operatopes associated to $d$-tuples of Wigner random matrices converge in the Hausdorff metric to NC zonoids that are axis-aligned ellipsoids as $m \rightarrow \infty$.

\end{example}

\begin{example}\label{ex:perm_haarNCzonoid}

Recall from Example \ref{ex:permutation_conv} that a $d$-tuple of independent uniformly distributed random permutation matrices converge in distribution to a $d$-tuple of free Haar unitaries.
While we do not have a closed form expression for the support function of a NC zonoid corresponding to a $d$-tuple of free Haar unitaries, we plot in Figure \ref{fig:haar-nczonoid} an $m$-operatope approximation associated to a $d$-tuple of independent uniform permutation matrices for large $m$.
\end{example}

\begin{example}\label{ex:wish_mpNCzonoid}

Recall from Example \ref{ex:wigner_conv} that a $d$-tuple of independent Wishart random matrices converge in distribution to a $d$-tuple of free random variables with a Marchenko-Pastur distribution.
Again, we do not have a closed form expression for the support function of the corresponding NC zonoid, and we plot in Figure \ref{fig:haar-nczonoid} an $m$-operatope approximation associated to a $d$-tuple of Wishart random matrices for large $m$.
\end{example}

%\EO{Add plot of m-operanoids for uniform permutation matrices for large m.}

\begin{figure}[h]
    \centering
    \includegraphics[width=0.25\linewidth]{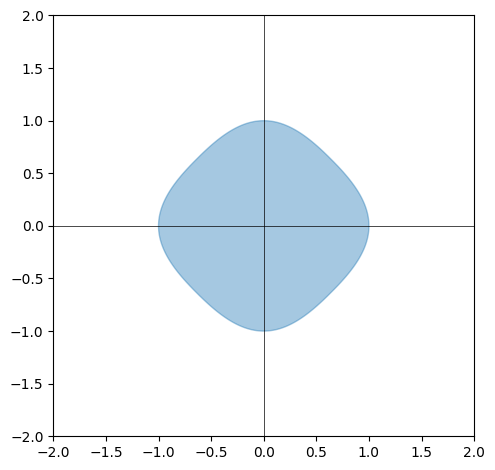}\hspace{0.5in}
    \includegraphics[width=0.25\linewidth]{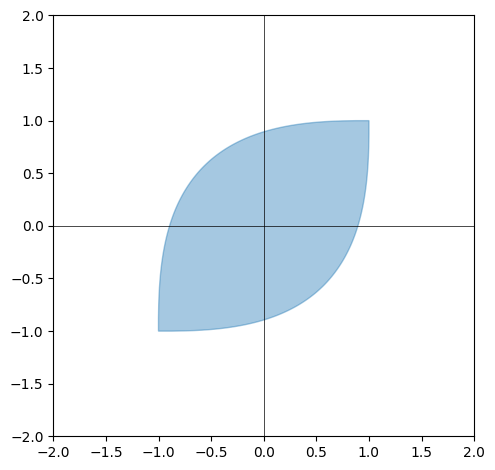}
    \caption{Approximation of the limiting NC zonoids in $\RR^2$ associated to free Haar unitaries (left) and to free Marchenko-Pastur random variables (right).  These are each a random sample of an $m$-operatope with $N = 1$ summand and with $m=500$ associated to independent uniform permutation matrices and to independent Wishart random matrices, respectively.}
    \label{fig:haar-nczonoid}
\end{figure}

It is of interest to study NC zonoids in general dimensions $d$, which is a convex body parameter of the \emph{joint} distribution of a $d$-tuple of self-adjoint NC random variables.  The distribution of a single self-adjoint NC random variable can be defined by its spectral distribution, which is given by a measure on $\RR$.  For $d = 2$, we are not aware of a notion of \emph{joint spectral measure} of a pair of NC random variables, although this notion is available in the case of a pair of matrices~\cite{heinavaara2025tracial}.  Based on the observation that all NC zonoids are zonoids in $\RR^2$, there is a measure $\nu_{a_1, a_2}$ on $\RR^2$ such that for all $(u_1, u_2) \in \mathbb{S}^{1}$, $$\tau(|u_1a_1 + u_2 a_2|) = \int_{\RR^2} |u_1x_1 + u_2x_2| \dint \nu_{a_1, a_2}(x).$$ For such a measure $\nu_{a_1,a_2}$ to be viewed as a joint spectral measure associated to $(a_1,a_2)$, it should be the case that:
    \begin{align*}
        \tau(f(a_1, a_2)) = \int_{\RR^2} f(x) \dint \nu_{a_1,a_2}(x),
    \end{align*}
    for some more general class of functions $f$ beyond just the absolute value $|\cdot|$.  It would be of interest to define and study such joint spectral measures of pairs of NC random variables.

\subsection{Free Zonoids}\label{sec:freezonoids}

Examples \ref{ex:wigner_ellipsoid}--\ref{ex:wish_mpNCzonoid} yield NC zonoids that are specified by tuples of free random variables.  Motivated by this observation, we incorporate the assumption of freeness into the definition of NC zonoids to obtain the following subclass of NC zonoids called \emph{free zonoids}. Due to the additional structure imposed via freeness, we are able to obtain explicit characterizations of the support functions of these convex bodies in a few cases, as in Example \ref{ex:wigner_ellipsoid} above and Example \ref{example:non-ellipsoidal} in the sequel.

\begin{definition}
Let $(a_1, \ldots, a_d)$ be a $d$-tuple of free random variables in a $C^\star$-probability space $(\mathcal{A}, \tau)$ and fix some $v \in \RR^d$.  The associated NC zonoid with support function
\[h_{Z}(u) := \langle v, u \rangle + \tau\left( \left|\sum_{i=1}^d u_i a_i\right| \right), \quad u \in \mathbb{S}^{d-1},\]
is called a \emph{free zonoid}.
% \VC{Again, do we need to make a note of origin symmetry?} \EO{Same comment as for definition 17} \VC{Let's discuss this!}
\end{definition}

An important point of distinction between free zonoids and NC zonoids (as well as operatopes and operanoids) is that the class of free zonoids is not closed under affine transformations.  The reason for this is that a $d$-tuple of free random variables does not remain free under the action of a linear map.

As an additional example beyond Examples \ref{ex:wigner_ellipsoid}--\ref{ex:wish_mpNCzonoid}, we next consider a free zonoid obtained by taking linear combinations of free, but not identically distributed, random variables.
% \VC{For the following example, the notion of a `spectral measure' comes up.  Would it be useful to define this more generally?} \EO{Can you remind me your thoughts here? Is the description in Section 4.1.1 sufficient?} \VC{Let's discuss -- I wonder what the right place is to bring up the point about joint spectral measures for NC random variables.}

\begin{example}\label{example:non-ellipsoidal}
Let $u \in \mathbb{S}^1$ and $\beta \in \RR$. Consider the random matrix
\[B = u_1 B_1 + u_2 \beta B_2,\]
where $B_1$ is a shifted $\kappa m \times \kappa m$ Wishart matrix satisfying
\[B_1 = \frac{1}{m \sqrt{\kappa}} W^TW -   I,\]
for an i.i.d. Gaussian matrix $W \in \RR^{m \times \kappa m}$ and $\kappa m \times \kappa m$ identity matrix $I$, and $B_2$ is an $\kappa m \times \kappa m$ GOE matrix. Asymptotically as $m \to \infty$, $B$ converges to the free additive convolution of a (shifted) Marchenko-Pastur law and a semicircle law.

The support function of the associated free zonoid is given by
$h_Z(u) = \int_{\RR} |x| \rho(x;u) \dint x$, where the explicit form of the density $\rho(x;u)$ may be obtained by appealing to \cite[(A.2)]{cheng2013spectrum} through the connection observed in \cite{LuYao2025_equivpolyspec}.  Specifically, in terms of the reparametrization $\gamma = \frac{1}{\kappa}$, $a = \frac{u_1}{\kappa^{1/2}}$, and $\nu = \frac{1}{\kappa}\left( u_1^2 + (1 - u_1^2) \beta^2\right)$, %the explicit form of $\rho(x;u)$ is 
\begin{align*}
    \rho(x;u) = \begin{cases} 0, \, D \leq 0 \\ \sqrt{\frac{3}{2}}((\sqrt{D} + R)^{1/3} + (\sqrt{D} - R)^{1/3}), \, D > 0, \end{cases} 
\end{align*}
where
\begin{align*}
    D &= Q^3 + R^2, &
    R &= (9\alpha_2\alpha_1 - 27 \alpha_0 - 2\alpha_2^3)/54, &
    Q &= (3\alpha_1 - \alpha_2^2)/9,
\end{align*}
and 
\begin{align*}
    \alpha_2 &= \frac{(\nu + ax)\gamma}{a(\nu - a^2)}, &
    \alpha_1 &= \frac{(a + \gamma x)\gamma}{a(\nu - a^2)}, &
    \alpha_0 &= \frac{\gamma^2}{a(\nu - a^2)}.
\end{align*}
Figure~\ref{fig:placeholder} gives illustrations of these free zonoids for several choices of $\kappa$ and $\beta$.
\end{example}

\begin{figure}[h]
    \centering
    \includegraphics[width=0.25\linewidth]{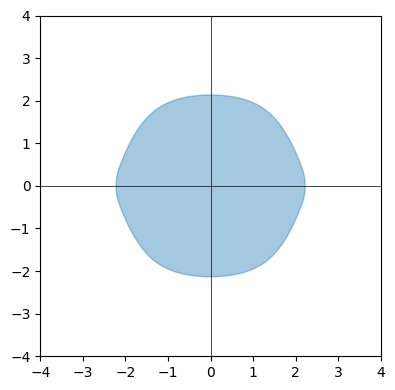}
    \hfill\includegraphics[width=0.25\linewidth]{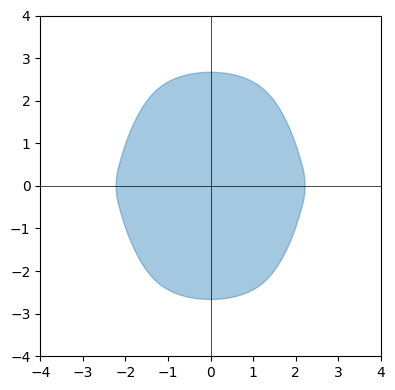}
    \hfill\includegraphics[width=0.25\linewidth]{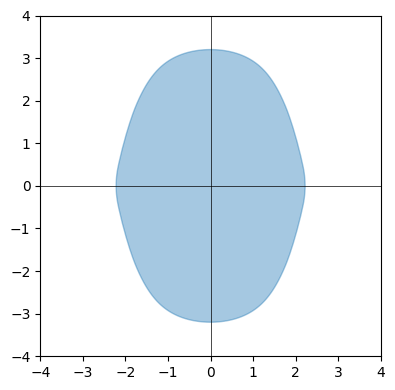}
    \caption{NC zonoid in $\R^2$ from Example~\ref{example:non-ellipsoidal} for parameters $\kappa = 5$, $\beta = .75$, $1$, and $1.25$ from left to right.}
    \label{fig:placeholder}
\end{figure}

%\VC{Should the beginning part here go to the beginning of the subsection on free zonoids as motivation?}

In the preceding example as well as in Examples \ref{ex:wigner_ellipsoid}--\ref{ex:wish_mpNCzonoid}, a free zonoid is obtained as a limit of a sequence of $m$-operanoids.  The next result states that such a construction is in fact applicable for any free zonoid, as all such zonoids may be obtained as limits of sequences of $m$-operanoids for growing $m$.

% A natural problem is to characterize the convex bodies, or more specifically the NC zonoids, that can be approximated as the limit of $m$-operanoids as $m \to \infty$. 
% This remains an open challenge in general, but we do have the following result showing that any free zonoid can be obtained as the limit of operanoids. 

\begin{proposition}\label{prop:freelimit}
    Let $Z$ be a free zonoid. There exists a sequence of $m$-operanoids $\{Z_m\}_{m =1}^{\infty}$ such that
    \begin{align*}
        \lim_{m \to \infty} d_H(Z_m, Z)  = 0.
    \end{align*}
\end{proposition}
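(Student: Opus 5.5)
The plan is to realize the free tuple $(a_1,\ldots,a_d)$ as a limit in the sense of moments of random matrix models, and then invoke Proposition~\ref{prop:distr_limit}. First, the linear term $\langle v,u\rangle$ is harmless: it translates every approximant by $v$, and translates of $m$-operanoids are again $m$-operanoids (translated operatopes converge to translated limits). So it suffices to treat $v=0$.

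Each $a_i$ is self-adjoint and bounded (as an element of a $C^\star$-algebra). By \eqref{eq:spectalmeasure} its law is a compactly supported probability measure $\nu_i$ on $[-\|a_i\|,\|a_i\|]$. For each $m$, choose deterministic diagonal matrices $D_i^{(m)} \in \mathbb{H}^m$ whose eigenvalues are the $j/m$-quantiles of $\nu_i$. Their empirical spectral distributions converge weakly to $\nu_i$, and they satisfy $\|D_i^{(m)}\|_{op} \le \|a_i\|$. Next, take independent Haar-distributed unitaries $U_1^{(m)},\ldots,U_d^{(m)}$ and set $A_i^{(m)} := U_i^{(m)} D_i^{(m)} (U_i^{(m)})^\star$. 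Voiculescu's asymptotic freeness theorem for unitarily invariant ensembles applies here. In its version with $\tau_m = \EE[\mathrm{tr}_m(\cdot)]$, i.e.\ the $C^\star$-probability space of Example~\ref{ex:rand_matrix_Calg}, it gives that $(A_1^{(m)},\ldots,A_d^{(m)})$ converges in the sense of moments to a free tuple with marginals $\nu_1,\ldots,\nu_d$.

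A free tuple's joint distribution is determined by its marginals. Hence the limit has the same joint moments as $(a_1,\ldots,a_d)$. The family $\{A_i^{(m)}\}$ is uniformly bounded in operator norm by $\max_i \|a_i\|$. The entries are therefore bounded, so they are legitimate elements of $\mathbb{H}^m \otimes L^\infty$. By Example~\ref{ex:rand_matrix_Calg} and Theorem~\ref{t:zonoid_vitale_characterization}, the NC zonoid $Z_m$ with support function $\EE[\mathrm{tr}_m|\sum_i u_i A_i^{(m)}|]$ is an $m$-operanoid. The normalization $\mathrm{tr}_m = \frac1m\mathrm{Tr}$ is only a positive scaling of the random matrices. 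Proposition~\ref{prop:distr_limit} then yields $d_H(Z_m,Z)\to 0$.

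\textbf{Main obstacle.} The delicate step is matching the limit to the given space $(\mathcal{A},\tau)$. Proposition~\ref{prop:distr_limit} compares to an NC zonoid, and $\tau(|\sum u_i a_i|)$ depends only on the joint moments: the spectral measure of $\sum u_i a_i$ is compactly supported and determined by its moments. So I only need the moment identification, which freeness plus equal marginals provides. I would state it explicitly by checking that mixed moments of free variables are universal polynomials in the marginal moments.

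A secondary check is the convergence of the quantile approximants. This needs $\nu_i$ compactly supported, which boundedness gives, so that moment convergence follows from weak convergence. If quoting asymptotic freeness in expectation feels heavy, an alternative is almost sure asymptotic freeness together with dominated convergence, justified by the uniform norm bounds.
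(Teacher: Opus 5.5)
Your proposal is correct and follows the paper's proof: you take diagonal matrices with the right limiting spectra, conjugate them by independent Haar unitaries, invoke asymptotic freeness, and conclude with Proposition~\ref{prop:distr_limit}. Your deterministic quantile diagonals (where the paper uses i.i.d.\ random diagonal entries), the reduction to $v=0$, and the identification of the limiting joint moments with those of $(a_1,\ldots,a_d)$ via freeness fill in details the paper leaves implicit.
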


Before giving the proof, we recall that Haar unitaries (see Example \ref{ex:permutation_conv} for definition) have the following \emph{liberation} property: Let $(\mathcal{A}, \tau)$ be a $C^{\star}$-probability space and $a,b,u \in \mathcal{A}$ such that $u$ is a Haar unitary and $a,b$ are self-adjoint random variables that are free from $u$. Then, $ubu^\star$ has the same distribution as $b$ and is free from $a$.
We can now prove Proposition \ref{prop:freelimit}.

\begin{proof}
Let $Z$ be a free zonoid parameterized by a $d$-tuple $(a_1, \ldots, a_d)$ of free random variables, where the spectral distribution of $a_i$ is denoted by $\nu_i$. For each $m$, let $D_1^{(m)}, \ldots, D_d^{(m)}$ be a $d$-tuple of independent random diagonal matrices, where the diagonal entries of $D_i^{(m)}$ are i.i.d. random variables with distribution $\nu_i$ on $\RR$ normalized by $\frac{1}{m}$. We can apply the asymptotic liberation technique to obtain a $d$-tuple of free random matrices with the same marginal distributions:
$$\left(U_1^{(m)}D^{(m)}_1(U_1^{(m)})^{\star}, \ldots, U_d^{(m)}D^{(m)}_d(U_d^{(m)})^{\star}\right),$$
where the $U_i^{(m)}$ are Haar unitary random matrices that are free from each other and from the diagonal matrices $D_i^{(m)}$.
% \VC{Instead of saying ``... free and free ...'', how about saying ``... free from each other and from ...''?} \EO{yes, that sounds better, done.}
Thus, as $m \to \infty$, this $d$-tuple converges in distribution to
$(a_1, \ldots, a_d)$ of free random variables in $(\mathcal{A}, \tau)$ such that $a_i$ has spectral distribution $\mu_i$. For each $m$, let $Z_m$ be the free zonoid parameterized by the $d$-tuple above. The final conclusion is then obtained by applying  Proposition~\ref{prop:distr_limit}.
% \VC{Should we add an extra sentence here applying Proposition~\ref{prop:distr_limit} to conclude the result?} \EO{Yes, I added a few sentences to clarify.}
\end{proof}

\begin{remark}
    The question of which NC zonoids can be approximated by operanoids is related to the Connes embedding conjecture, which was recently proved to be false \cite{Connes2021}. In particular, a result of Voiculescu \cite{Voiculescu1994} says that if the Connes embedding conjecture is false, then there exists a $d$-tuple of NC random variables $(a_1, \ldots, a_d) \in (\mathcal{A}, \tau)$ that cannot be approximated in distribution by $d$-tuples of $m \times m$ matrices as $m \to \infty$. This does not necessarily imply the existence of NC zonoids that cannot be approximated by limits of $m$-operanoids, since the convergence of NC zonoids in Hausdorff distance does not imply convergence in distribution of the underlying random variables, so this remains an interesting open problem. 
\end{remark}

\section{Towards Applications}\label{sec:applications}

In this section we take preliminary steps towards applications by discussing operanoid and NC zonoid generalizations of previous methods that use zonoids.  Specifically, in Section~\ref{sec:lift-NC} we present an analog of a lift zonoid for representing high-dimensional data distributions and in Section~\ref{sec:poisson-hypersurface} we describe applications of operanoids in stochastic geometry.  Along the way, we discuss some open questions for further investigation.

\subsection{Representations of NC Probability Distributions} \label{sec:lift-NC}

From their definitions, an operanoid or an NC zonoid may be viewed as a parameter of the distribution of the underlying $d$-tuple of random matrices or NC random variables. Clearly, multiple distributions correspond to the same operanoid or NC zonoid. However, in the case of zonoids, there is a way to define a \emph{unique} zonoid for a given random vector in $\RR^d$ that can be used as a representation of its probability distribution. 
In this section, we first describe this representation, called the lift zonoid, which comes with a corresponding metric and stochastic ordering on the space of probability measures on $\RR^d$. We will then define a new analytic object specified by the joint distribution of a $d$-tuple of random matrices, and more generally NC random variables, via the operanoid and NC zonoid generalization of the lift zonoid. %This new representation is a generalization of an object called the \emph{lift zonoid}, which is a representation of random vectors in $\RR^d$ with many applications in statistics.

\subsubsection{Lift zonoids}
Given a probability measure on $\RR^d$, there exist many representations of the measure, such as the cumulative distribution function (CDF) or the characteristic function, and each of these has appealing properties that are useful in various contexts.  For instance, the CDF is used to define the Kolmogorov-Smirnov metric between probability distributions on $\RR^d$, as well as a stochastic ordering based on an order of these functionals.  Here we recall a convex-geometric representation of probability measures based on zonoids that exhibits many appealing properties.  Specifically, for a probability measure $\mu$ on $\RR^d$, the \emph{lift zonoid} is the convex body in $\RR^{d+1}$ obtained by taking the expectation of the line segment $\EE\left([0, (1, X)]\right)$, where $X \sim \mu$ \cite{KoshevoyMosler_Lift_1998}. That is, the lift zonoid $Z_{\mu}$ has support function
\begin{align*}
h_{Z_{\mu}}(u) := \int_{\RR^d} \max\{0, \langle u, (1,x) \rangle\} \dint \mu(x).
\end{align*}
Note that this zonoid is not centered at the origin, but instead has center of symmetry at $\frac{1}{2}(1, \int_{\mathbb{R}^d} x \dint \mu(x))$.
There is a one-to-one correspondence between a multivariate probability distribution and its lift zonoid, so this convex body is a representation of the underlying probability measure. 

The lift zonoid can be used to define a metric between probability measures and a stochastic ordering. Indeed, a metric between probability measures is defined by the Hausdorff distance between the two corresponding lift zonoids, i.e.
\begin{align*}
    d(\mu, \nu) = d_H(Z_{\mu}, Z_{\nu}) := \sup_{u \in \mathbb{S}^d} |h_{Z_{\mu}}(u) - h_{Z_{\nu}(u)}|.
\end{align*}
A stochastic ordering is defined through containment of the lift zonoids, i.e. $\mu \leq \nu$ if and only if $Z_{\mu} \subseteq Z_{\nu}$. The lift zonoid representation also leads to a notion of central regions and data depth \cite{mosler_multivariate_2002}. 

An advantage of the lift zonoid representation over the CDF is that it behaves nicely under certain transformations of the probability measure. For example, the lift zonoid is equivariant with respect to linear transformations of the probability measure (see Proposition 2.24 in \cite{mosler_multivariate_2002}).  %Moreover, the lift zonoid of a sum of probability measures is the Minkowski sum of their lift zonoids. 
% \VC{In the preceding sentence, should we replace the two occurrences of ``sum'' by ``linear combination''?  Alternatively, we could just change ``probability measure'' by ``measure''?  I ask because ``sum of probability measures'' feels strange.} \EO{I actually think we don't need that sentence (I commented it out), it felt repetative with respect to equivariant sentence before it - what do you think?}
For more properties of the lift zonoid and its applications in statistics, we refer the reader to the monograph \cite{mosler_multivariate_2002}.

\subsubsection{Lift operanoids and lift NC zonoids}

There is a natural generalization of a lift zonoid for probability measures on the space of $d$-tuples of self-adjoint matrices in $\mathbb{H}^m$. First, we note that we can define an operanoid (with center not necessarily at the origin) as a convex body $Z$ with support function
\begin{align}\label{e:hK_offcenter}
    h_Z(u) = \EE\left[\mathrm{Tr}\left(\left(\sum_{i=1}^d u_i A_i\right)_+\right)\right], \quad u \in \mathbb{S}^{d-1},
\end{align}
where $(A)_+$ is the positive part of the matrix $A$.  If $Z$ has a center of symmetry $x_0$, we can center $Z$ at the origin by defining $Z_0 := Z - x_0$. As in Corollary 2.4 in \cite{bolker_class_1969}, we see that the support function of $Z_0$ is:
\begin{align*}
    h_{Z_0}(u) &= \frac{1}{2}\left(h_{Z_0}(u) + h_{Z_0}(-u)\right) = \frac{1}{2}\left(h_Z(u) - \langle x_0, u\rangle + h_{Z}(-u) + \langle x_0, u \rangle \right)\\
    &= \frac{1}{2}\EE\left[\mathrm{Tr}\left(\left(\sum_{i=1}^d u_i A_i\right)_+\right)\right] + \frac{1}{2}\EE\left[\mathrm{Tr}\left(\left(\sum_{i=1}^d - u_i A_i\right)_+\right)\right] %\\
    %&= \frac{1}{2}\EE\left[\mathrm{Tr}\left(\left(\sum_{i=1}^d u_i A_i\right)_+\right)\right] - \frac{1}{2}\EE\left[\mathrm{Tr}\left(\left(\sum_{i=1}^d u_i A_i\right)_-\right)\right] \\
    = \frac{1}{2}\EE\left[\left\|\sum_{i=1}^d u_i A_i \right\|_1\right],
\end{align*}
which corresponds with our original definition of operanoids centered at the origin. A convex body $Z$ defined above is indeed centrally symmetric about some $x_0$, as proved in the following proposition.

\begin{proposition}\label{prop:center}
The operanoid $Z$ with support function \eqref{e:hK_offcenter} is centrally symmetric around $\frac{1}{2}\sum_{j=1}^m \EE[\mathbf{a}^{(j)}]$, where $\mathbf{a}^{(j)} = (A_1^{(jj)}, \ldots, A_d^{(jj)}) \in \RR^d$ is the vector of the $j$th diagonal entries of the parametrizing matrices.
% \VC{It is unclear why $K$ should contain the origin if it is not centered.  I'm a bit confused by this.} \EO{I deleted that my mistake - the lift zonoid/operanoid contains the origin, but this proposition just concerns a general shifted operanoid, so it doesn't have to contain the origin.}
\end{proposition}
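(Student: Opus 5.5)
The plan is to work entirely with support functions, using the matrix identity $X_+ = \tfrac12(|X| + X)$ for Hermitian $X$. Central symmetry of $Z$ about $x_0$ means $Z - x_0 = -(Z - x_0)$. In terms of support functions, this is equivalent to
\[
h_Z(u) - \langle x_0, u\rangle = h_Z(-u) + \langle x_0, u\rangle \quad \text{for all } u.
\]
It therefore suffices to show that $h_Z(u) - h_Z(-u) = 2\langle x_0, u\rangle$ with $x_0 = \frac{1}{2}\sum_{j=1}^m \EE[\mathbf{a}^{(j)}]$.

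First I will write $X_u := \sum_{i=1}^d u_i A_i$. For each realization, spectral decomposition gives $X_u = (X_u)_+ - (X_u)_-$ and $|X_u| = (X_u)_+ + (X_u)_-$. Since $(-X_u)_+ = (X_u)_-$, we obtain
\[
\mathrm{Tr}((X_u)_+) - \mathrm{Tr}((-X_u)_+) = \mathrm{Tr}(X_u).
\]
Taking expectations, which is allowed under the integrability assumption of Theorem~\ref{t:zonoid_vitale_characterization} since $|\mathrm{Tr}(X_u)| \le \|X_u\|_1$, yields
\[
h_Z(u) - h_Z(-u) = \EE[\mathrm{Tr}(X_u)].
\]

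Next I will compute the trace by linearity:
\[
\mathrm{Tr}(X_u) = \sum_{i=1}^d u_i \sum_{j=1}^m A_i^{(jj)} = \Big\langle u, \sum_{j=1}^m \mathbf{a}^{(j)} \Big\rangle,
\]
so $\EE[\mathrm{Tr}(X_u)] = \langle u, \sum_j \EE[\mathbf{a}^{(j)}]\rangle = 2\langle x_0, u\rangle$. This is exactly the required identity, which establishes the central symmetry. As a by-product, adding the two relations, i.e. using $\mathrm{Tr}((X_u)_+) + \mathrm{Tr}((-X_u)_+) = \|X_u\|_1$, recovers the centered support function $\tfrac12\EE\|X_u\|_1$, consistent with the computation preceding the proposition.

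There is no real obstacle. The only points needing care are the justification of exchanging expectation with trace, which follows from integrability, and being precise that "centrally symmetric about $x_0$" is equivalent to the support function identity above; the latter holds because support functions determine convex bodies and $h_{-K}(u) = h_K(-u)$.
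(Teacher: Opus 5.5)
Your proof is correct and is essentially the paper's argument. Both rest on the eigenvalue-wise identity $t_+ = \tfrac12(|t|+t)$ together with $\mathrm{Tr}(X_u) = \langle u, \sum_{j} \mathbf{a}^{(j)}\rangle$. The paper packages this as $h_Z(u) = \tfrac12\EE\|X_u\|_1 + \langle x_0, u\rangle$ with an even first term, whereas you compute the odd part $h_Z(u) - h_Z(-u)$ directly, which is equivalent.
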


\begin{proof}
    For $t \in \RR$, we have $\max\{t,0\} - \frac{1}{2}|t| = \frac{1}{2}t$, which implies that:
    \begin{align*}
    \mathrm{Tr}\left(\left(\sum_{i=1}^d u_i A_i\right)_+\right) &= \sum_{j=1}^m \max\left\{\lambda_j\left(\sum_{i=1}^d u_i A_i \right), 0 \right\} = \sum_{j=1}^m \frac{1}{2}\lambda_j\left(\sum_{i=1}^d u_i A_i \right) + \frac{1}{2}\sum_{j=1}^m \left|\lambda_j\left(\sum_{i=1}^d u_i A_i \right) \right| \\
        &= \frac{1}{2} \mathrm{Tr}\left(\sum_{i=1}^d u_i A_i\right) + \frac{1}{2}\left\|\sum_{i=1}^d u_i A_i \right\|_1 = \frac{1}{2} \sum_{j=1}^m \langle u,  \mathbf{a}^{(j)}\rangle + \frac{1}{2}\left\|\sum_{i=1}^d u_i A_i\right\|_1.
    \end{align*}
    Taking expectations of both sides gives $Z = Z_0 + \frac{1}{2}\sum_{j=1}^m \EE[\mathbf{a}^{(j)}]$ for $Z_0$ that is centrally symmetric about the origin, which proves the claim.
\end{proof}
% \VC{In the preceding proposition statement and proof, should we change $K, K_0$ to $Z_, Z_0$ to match the preceding and successing discussion?} \EO{Yes, done.}

We now define a convex body associated to a collection of self-adjoint random matrices that generalizes the notion of a lift zonoid.

\begin{definition}
    Let $A_1, \ldots, A_d$ be a $d$-tuple of random matrices in $\mathbb{H}^m$. The %(centered) 
    associated $\emph{lift operanoid}$ is the convex body in $\RR^{d+1}$ with support function
    \begin{align*}
      %h_{Z_A}(u) := \frac{1}{2}\mathbb{E}\left[\left\|u_1I + \sum_{i=2}^{d+1} u_i A_i\right\|_1\right],  
      h_{Z}(u) := \mathbb{E}\left[\mathrm{Tr}\left(u_1I + \sum_{i=2}^{d+1} u_i A_i\right)_+\right]
    \end{align*}
    where $I$ is the $m \times m$ identity matrix.
\end{definition}

More generally, for a $d$-tuple of NC random variables, we define the following generalization of a lift zonoid representation.

\begin{definition}
    Let $a_1, \ldots, a_d$ be a $d$-tuple of NC random variables in a $C^{\star}$ probability space $(\mathcal{A}, \tau)$. The associated $\emph{lift NC zonoid}$ is the convex body in $\RR^{d+1}$ with support function
    \begin{align*}
      h_{Z}(u) := \frac{1}{2}\tau\left(\left|u_1\mathbf{1} + \sum_{i=2}^{d+1} u_i a_i\right|\right) + \frac{1}{2}\tau\left(u_1\mathbf{1} + \sum_{i=2}^{d+1} u_i a_i\right),  
    \end{align*}
    where $\mathbf{1}$ is the identity element of $\mathcal{A}$.
\end{definition}

As described previously, the utility of the lift zonoid representation stems from the fact that this convex body uniquely characterizes the distribution of the underlying $d$-tuple of random variables. Further, one can use this representation to define a metric on the space of probability on $\RR^d$ which has a variety of applications. Therefore, we would first like to answer the following basic questions.

\begin{question} Does the lift NC zonoid uniquely characterize the joint distribution of the underlying NC random variables?
\end{question}

This question remains open in general, but there are a few cases for which we can answer this question positively because the lift NC zonoid corresponds to a lift zonoid.
First, consider the case $d=1$ and a single deterministic self-adjoint NC random variable $a \in (\mathcal{A}, \tau)$. Its associated lift NC zonoid $Z$ in $\RR^2$ has support function
\begin{align*}
h_Z(u) &= \frac{1}{2}\tau\left(|u_1\mathbf{1} +  u_2 a|\right) + \tau(u_1\mathbf{1} + u_2 a) = \frac{1}{2}\int_{\RR} |u_1 + u_2 x|\dint \nu(x) + \frac{1}{2}\int_{\RR} (u_1 + u_2 x )\dint \nu(x) \\
&= \int_{\RR} \max\{0, u_1 + u_2 x\}\dint \nu(x),
%h_Z(u) &= \frac{1}{m}\mathrm{Tr}\left(u_1I +  u_2 A\right)_+ \\
%&= \frac{1}{m}\sum_{j=1}^m \max\{0, u_1 + u_2 \lambda_j(A)\} \\
%&= \int_{\RR} \max\{0, u_1 + u_2 x\}\dint \mu_A(x),
\end{align*}
where %$\mu_A := \frac{1}{m} \sum_{j=1}^m \delta_{\lambda_j(A)}$ 
$\nu$ is the spectral distribution of $a$ as defined in Section \ref{sec:NCrandomvariables}.
Thus, $Z$ is the lift NC zonoid associated to $\nu$. By Theorem 2.21 in \cite{mosler_multivariate_2002}, $Z$ uniquely characterizes the distribution $\nu$. %Since $\mu_A$ is a discrete measure, the associated lift NC zonoid will be a zonotope.

%Similarly, consider a self-adjoint random $m \times m$ matrix $A$. The associated lift operanoid is
%\begin{align*}
%h(Z(A), u) &= \EE\left[\frac{1}{m}\mathrm{Tr}\left(u_1I +  u_2 A\right)_+\right] \\
%&= \EE\left[ \frac{1}{m}\sum_{j=1}^m \max\{0, u_1 + u_2 \lambda_i(A)\} \right] \\
%&= \EE\left[\int_{\RR} \max\{0, u_1 + u_2 x\}\dint \mu_A(x) \right] = \int_{\RR} \max\{0, u_1 + u_2 x\}\dint \Lambda_A(x).
%\end{align*}
%Thus, $Z(A)$ is the lift zonoid associated to the distribution $\Lambda_A$ of $A$ as described in Example \ref{ex:rand_matrix_Calg}. Again by Theorem 2.21 in \cite{mosler_multivariate_2002}, $Z(A)$ uniquely characterizes the distribution $\Lambda_A$. Also note that $Z(A)$ symmetric around $\frac{1}{2}(1, \int x \dint\nu(x))$ by Proposition \ref{prop:center}, where $\nu$ is the distribution of $A$.

%More generally, if $a = a^* \in \mathcal{A}$ is a \emph{self-adjoint} noncommutative random variable, then its distribution $\mu_a$ can be described by a compactly supported probability measure on $\RR$. The lift NC zonoid then corresponds to the lift zonoid of $\mu_a$ (see \cite[p.287]{mosler_multivariate_2002}), and thus uniquely characterizes the distribution of $a$.
% $\alpha(\mu) = \int_{\RR^d} \mu(dx)$ and $\ee(\mu) = \int_{\RR^d} x \mu(dx)$

%Question: Is the lift NC zonoid of a pair of matrices $(A,B)$ the lift zonoid of the joint spectral measure $\mu_{A,B}$ as in \cite{Heinavaara2023}? If yes, then these lift NC zonoids are all zonoids.

Now consider the case $d=2$ and two matrices $A$ and $B$ in $\mathbb{H}^m$. The associated lift NC zonoid $Z$ in $\RR^2$ has support function
\begin{align*}
h_Z(u) &= \frac{1}{m}\mathrm{Tr}\left(u_1I +  u_2 A + u_3 B\right)_+ = \int_{\RR^2} \max\{0, u_1 + u_2 x + u_3 y\} \dint \mu_{A,B}(x,y),
\end{align*}
where $\mu_{A,B}$ is the tracial joint spectral measure associated to $A$ and $B$ as in Theorem 1.4 in \cite{heinavaara2025tracial}. This shows that $Z$ is the lift zonoid in $\RR^3$ of the measure $\mu_{A,B}$ on $\RR^2$. Thus, 
the tracial joint spectral distribution of $A$ and $B$ is uniquely determined by the lift NC zonoid.

\begin{question} What is the relationship between convergence in the Hausdorff metric of the lift NC zonoid to convergence in the sense of moments for a collection of NC random variables?
\end{question}

In Section~\ref{sec:NCzonoids}, we discussed how convergence in the sense of moments implies convergence of the associated NC zonoids in the Hausdorff metric for a collection of NC random variables, but clearly the converse implication is not true. However, if the lift NC zonoid represents
% \VC{Should we change ``characterizes'' to something else such as ``represents''?  We want to signify that the lift NC zonoid is in one-to-one correspondence with the distribution, right?} \EO{Yes, we do and I meant characterizes that way but I like "represents" too}
the distribution of a $d$-tuple of self-adjoint NC random variables, we can study the metric measuring the distance between NC random variables using the Hausdorff distance between their associated lift NC zonoids. In particular, it would be of interest to generalize the well-known result for lift zonoids that convergence of the associated lift zonoids implies convergence in distribution of the associated random vectors (under mild conditions) \cite[Theorem 2.30]{mosler_multivariate_2002}.

%\EO{How much do we want to say here?}
%\EO{ADD NOTE ON METRIC. Convergence w.r.t this metric is implied by convergence in distribution if the RVs are uniformly integrable in the znoid case. And if the RVs have a joint density, then convergence in distribution implies convergence in the lift zonoid metric.}

\subsection{Stationary Poisson Hypersurfaces} \label{sec:poisson-hypersurface}

In this section, we explore potential applications of operanoids in stochastic geometry, where zonoids play an important role in parameterizing random geometric models. We first review the specific application of zonoids to random hyperplane models and then describe a new model for random hypersurfaces with an analogous connection to operanoids.

\subsubsection{Poisson hyperplane process}

A \emph{Poisson hyperplane process} is a Poisson point process in the space $\mathcal{H}^d$ of hyperplanes in $\RR^d$. We parameterize a hyperplane $H \in \mathcal{H}^d$ by its normal vector $u \in \mathbb{S}^{d-1}$ and distance $t \in \RR$ and define $H := H(u,t) = \{x \in \RR^d : \langle x, u \rangle = t\}$. A random hyperplane process can then be constructed as follows. Let $\Phi = \{t_j\}_{j \in \mathbb{Z}}$ be a homogeneous Poisson process in $\RR$ with parameter $\gamma > 0$ and $\{u_j\}_{j \in \mathbb{Z}}$ be a collection of i.i.d. unit vectors independent of $\Phi$ with distribution $\phi$, an even probability measure on the unit sphere.  The collection of hyperplanes $X = \{H(u_j,t_j)\}_{j \in \mathbb{Z}}$ defines a \emph{stationary Poisson hyperplane process} with intensity $\gamma$ and directional distribution $\phi$. We refer to the monograph \cite{hug2024poisson} for further background on these models.

The \emph{associated zonoid} of the stationary Poisson hyperplane process $X$ is the zonoid $\Pi$ with support function
\begin{align*}
    h_{\Pi}(v) := \frac{\gamma}{2}\int_{\mathbb{S}^{d-1}}|\langle u, v \rangle| \dint \phi(u).
\end{align*}
The distribution of $X$ is completely determined by $\Pi$, and certain statistics of the process are described by the geometric properties of this deterministic convex body. For example, the number of hyperplanes of $X$ that have a nonempty intersection with a one-dimensional line segment $[-v,v]$ has a Poisson distribution with parameter given by the support function of $\Pi$ at $v$:
\begin{align}\label{e:zonoid_supp}
\EE[\#\{X \cap [-v,v]\}] = h_{\Pi}(v).    
\end{align}
The associated zonoid is also closely related to the distribution of the \emph{zero cell} of $X$. This is the cell of the induced random tessellation that contains the origin, defined by
\[Z_0 := \bigcap_{i \in \mathbb{N}} H^{-}(u_i,t_i),\]
where $H^{-}(u,t)$ is the closed half-space defined by the hyperplane $H(u,t)$ that contains the origin. It is known, for instance, that the expected volume of the zero cell is a constant times the volume of the polar dual of the associated zonoid \cite[(10.49)]{schneider2008stochastic}.
%\VC{Are these properties derived from the associated zonoid?  If yes, it would be useful to mention this point and give a citation.  Also, should this paragraph go in the previous subsubsection?} \EO{I've moved this up and edited as you suggested - does it look ok?}

%There exist many results on the distribution of various geometric properties of this random convex polytope including its volume and f-vector. 

\subsubsection{Poisson hypersurface model}
We will now define a model for a collection of random hypersurfaces that generalizes the construction of a stationary Poisson hyperplane process and show that we can define an associated operanoid that satisfies a property analogous to \eqref{e:zonoid_supp}.

First, let $\Phi = \{t_j\}_{j \in \mathbb{N}}$ be a Poisson point process with intensity $\gamma$ on $[0,\infty)$.
For each $j \in \mathbb{N}$, let $\{U^{(j)}_i\}_{i=1}^d$ be an independent collection of $d$ random matrices in $\mathbb{H}^m$ such that $\sum_{i=1}^d (U^{(j)}_i)^{\star}U^{(j)}_i = \sum_{i=1}^d U^{(j)}_i(U^{(j)}_i)^{\star} = I$ and that is independent of $\Phi$.
%\VC{Is there a natural random matrix ensemble that satisfies this condition?  In principle, given any random $U_i$ and then applying something called `operator scaling' does the job.  Do we want to mention this?} \EO{I don't know of a random matrix ensemble besides normalized i.i.d. unitary matrices. Could you detail what you mean by 'operator scaling' more?}
Define the random hypersurface
\begin{align}\label{e:poisson-hypersurface}
X_m = \left\{ x \in \RR^d : \prod_{j \in \mathbb{N}} \det\left(m t_jI - \sum_{i=1}^d x_i U^{(j)}_i\right) = 0\right\}.
\end{align}
We then have the following proposition.
\begin{proposition}
   For the random hypersurface $X_m$ as defined above, there exists an $m$-operanoid $\Pi$ such that for all $v \in \RR^d$,
   \begin{align*}
        \EE[\#(X_m \cap [-v,v])] = h_{\Pi}(v).
   \end{align*}
\end{proposition}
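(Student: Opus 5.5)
We count intersection points of the segment $[-v,v]$ with $X_m$ by parametrizing the segment as $x = sv$, $s\in[-1,1]$. A point $sv$ lies on the $j$-th sheet exactly when $\det\bigl(mt_jI - s\sum_i v_iU^{(j)}_i\bigr)=0$, i.e. when $mt_j = s\lambda$ for some eigenvalue $\lambda$ of the Hermitian matrix $M_j(v) := \sum_{i=1}^d v_i U^{(j)}_i$. For fixed $j$ and each eigenvalue $\lambda$ (counted with multiplicity), this has a solution $s = mt_j/\lambda \in[-1,1]$ iff $0\le mt_j\le |\lambda|$ (for $\lambda\neq0$). Hence, up to null events (ties, $t_j=0$), the number of intersection points contributed by the $j$-th sheet is
\[
N_j(v)=\sum_{k=1}^m \mathbf{1}\{m t_j \le |\lambda_k(M_j(v))|\}.
\]
This is the count with multiplicity; points shared between sheets or repeated eigenvalues occur with probability zero if the law of the $U$'s is such that eigenvalues are a.s. distinct. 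Otherwise I will interpret $\#$ as the count with multiplicity, which is the natural reading.

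Next I apply the Campbell (Mecke) formula for the marked Poisson process $\{(t_j, U^{(j)})\}$. Its points $t_j$ have intensity $\gamma\,dt$ on $[0,\infty)$ and its marks are i.i.d. copies of $(U_1,\dots,U_d)$, independent of $\Phi$. Campbell's formula gives
\[
\EE\Bigl[\sum_j N_j(v)\Bigr]=\gamma\int_0^\infty \EE\Bigl[\sum_{k=1}^m \mathbf{1}\{mt\le |\lambda_k(M(v))|\}\Bigr]dt=\frac{\gamma}{m}\,\EE\Bigl[\sum_{k}|\lambda_k(M(v))|\Bigr]=\frac{\gamma}{m}\EE\Bigl[\Bigl\|\sum_i v_iU_i\Bigr\|_1\Bigr].
\]
By Theorem~\ref{t:zonoid_vitale_characterization}, the right-hand side is the support function of an $m$-operanoid $\Pi$ defined by the random tuple $(\gamma/m)(U_1,\dots,U_d)$.

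It remains to check the integrability hypothesis of Theorem~\ref{t:zonoid_vitale_characterization}. The normalization $\sum_i U_i^\star U_i = I$ gives $\sum_i\|U_i\|_2^2=\mathrm{Tr}(I)=m$, which is bounded, so the expectation is finite. The same bound ensures finiteness of the expectation used in Campbell's formula, via $\|M(v)\|_1\le \sqrt m\,\|M(v)\|_2 \le \sqrt m\,\|v\|_2\sqrt{m}$.

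The main obstacle is the careful pointwise counting step: making sure each eigenvalue crossing produces exactly one intersection point, handling $\lambda=0$ (no solution unless $t_j=0$, a null event), and handling coincidences between sheets. These degeneracies have probability zero because $\Phi$ has a diffuse intensity and is independent of the marks, so for fixed $v$ almost surely no $mt_j$ equals any $|\lambda_k|$ exactly, and distinct sheets meet the segment at distinct points. If eigenvalue multiplicities cannot be excluded, I state the result with multiplicity. Everything else is a direct application of Campbell's theorem and Theorem~\ref{t:zonoid_vitale_characterization}.
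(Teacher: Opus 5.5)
Your proposal is correct and follows the paper's proof essentially step for step. You reduce the count on $[-v,v]$ to $\sum_j\sum_k \mathbf{1}\{mt_j \le |\lambda_k(\sum_i v_i U_i^{(j)})|\}$, apply Campbell's theorem to obtain $\frac{\gamma}{m}\EE\|\sum_i v_iU_i\|_1$, and invoke Theorem~\ref{t:zonoid_vitale_characterization}; your extra care (the scaling $(\gamma/m)U_i$ that matches the unnormalized trace norm, integrating over $[0,\infty)$, the integrability bound $\sum_i\|U_i\|_2^2 = m$, and the multiplicity and null-event conventions) only makes explicit points the paper leaves implicit.
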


\begin{proof}

%First consider $v \in \RR^d$ such that $\|v\|_2 \leq \lambda^{-1}m$. 
We first note that $\left|\lambda_k\left(\sum_{i=1}^d v_i U_i\right)\right|  \geq mt_j$ for some $j \in \mathbb{N}$ and $k \in [m]$ if and only if there is an $s \in [-1, 1]$ such that
\[mt_j - \lambda_k\left(\sum_{i=1}^d sv_i U_i\right) = 0,\]
which implies $sv \in X_m$. Thus, 
\begin{align*}
    \EE[\#(X_m \cap [-v,v])] 
    &= \EE\left[\sum_{j \in \mathbb{N}}\sum_{k = 1}^m \mathbf{1}\left\{\left|\lambda_k\left(\sum_{i=1}^d v_i U^{(j)}_i\right)\right|  \geq mt_j \right\}\right] \\
    % &= \EE\left[\sum_{j \in\mathbb{N}} \sum_{k=1}^m \mathbf{1}\left\{m^{-1}\left|\lambda_k\left(\sum_{i=1}^d v_i U^{(j)}_i\right)\right|  \geq t_j \right\}\right] \\
     &= \gamma\EE\left[\int_{\RR} \sum_{k=1}^m \mathbf{1}\left\{m^{-1}\left|\lambda_k\left(\sum_{i=1}^d v_i U^{(j)}_i\right)\right|  \geq s \right\} \mathrm{d}s\right] = \frac{\gamma}{m} \EE\left[\sum_{k=1}^m \left|\lambda_k\left(\sum_{i=1}^d v_i U^{(j)}_i\right)\right|\right],
\end{align*}
which is the support function of an $m$-operanoid corresponding to the $d$-tuple of random matrices $\left(\gamma U_1, \ldots, \gamma U_d\right)$.  Here we have applied Campbell's theorem \cite[Theorem 3.1.2]{schneider2008stochastic} to obtain the third equality.
\end{proof}

%\subsubsection{Open questions}

There are many open questions related to this model. We state a few here, beginning with a fundamental question relating the $m$-operanoid defined in the above proposition back to the distribution of the random hypersurface.

\begin{question} Is there a one-to-one correspondence between $m$-operanoids and the distribution of a Poisson hypersurface as defined in \eqref{e:poisson-hypersurface}?
\end{question}

Another question is whether we can extend the connection between the associated zonoid and the zero cell of a Poisson hyperplane process to these random hypersurfaces. Indeed, we can analogously define the \emph{zero chamber} of the Poisson hypersurface model as the set
\[S_0 := \bigcap_{j \in \mathbb{N}} \left\{x \in \RR^d : m t_jI - \sum_{i=1}^d x_i U^{(j)}_i \succcurlyeq 0 \right\}.\]
Random spectrahedra defined by Gaussian random matrices have been previously studied in \cite{BreidingRandSpect2019}. The questions considered there inspire a similar study of $S_0$ summarized by the following open question.

\begin{question} How does the distribution of the volume and boundary structure of the zero chamber of a Poisson hypersurface depend on the properties of the associated $m$-operanoid?
\end{question}

\section{Discussion and Future Work}\label{sec:conclusion}

We introduced a class of convex bodies called operatopes obtained by taking Minkowski sums of affine images of operator norm balls.  The limits of operatopes in the Hausdorff metric are called operanoids.  These definitions generalize those of zonotopes and zonoids, which are structured convex bodies with far-reaching applications.  Operanoids may be viewed as providing convex-geometric summaries of tuples of random matrices, and based on this perspective, we describe NC zonoids which provide convex-geometric summaries of a collection of noncommutative random variables.  We established basic properties of all of these convex bodies, and we took some preliminary steps towards potential applications in statistics and stochastic processes, generalizing previous methods involving zonoids.  We conclude by listing some open questions pertaining to convex-geometric, combinatorial, and computational aspects of operanoids and NC zonoids:

\paragraph{\bf Explicit constructions} What are families of free zonoids with closed-form descriptions of the associated support functions?  Are there NC zonoids that are not free zonoids, but which can be expressed as limits of operanoids?  Alternatively, are there NC zonoids that cannot be expressed as limits of operanoids?  %Are there explicit examples of $m$-operanoids that are not $m$-operatopes? 
% \VC{I feel that this last question could have an easy answer, so I propose we remove it.  Even in two dimensions, any symmetric convex body that is not semialgebraic will do -- it is an operanoid as all symmetric convex bodies in two dimensions are zonoids, and it is not an operatope as operatopes must be semialgebraic.} \EO{I agree, I've commented it out.}

\paragraph{\bf Facial structure questions} Is there a facial characterization of operatopes akin to that of zonotopes as polytopes that have centrally symmetric faces \cite{coxeter1962classification, shephard1974combinatorial}?  What can we say about the facial structure of NC zonoids and free zonoids?

\paragraph{\bf Optimization questions} Operatopes are a tractable class of convex bodies from the viewpoint of optimization, as linear optimization over an operatope may be accomplished via eigenvalue computation.  Can we approximately solve optimization problems over operanoids using spectral computations by leveraging Theorem~\ref{t:approx_rate}?  More broadly, what are families of convex bodies that can be well-approximated by operanoids, as these would also be amenable to efficient methods for approximate optimization via Theorem~\ref{t:approx_rate}?

\paragraph{\bf Geometric questions} Zonotopes are dual to hyperplane arrangements \cite{shephard1974combinatorial}; what is the analogous duality for operatopes?  Are there specializations of the Brunn-Minkowski inequality for operanoids, akin to those for zonoids \cite{van2023local}?  Is there an analog of the notion of the projection body \cite{bourgain2006projection} that is adapted to operanoids?

\subsection*{Acknowledgements}

The authors would like to thank Jorge Garza-Vargas and Oscar Leong for helpful discussions. EO was supported in part by NSF Grant DMS-2402234. VC was supported in part by NSF grant DMS-2502377 and by AFOSR grant FA9550-23-1-0204.

\bibliographystyle{plain}
\bibliography{biblio}

\end{document}